\documentclass[12pt, a4paper, reqno]{amsart}

\usepackage{amsmath, amssymb, amsthm, mathrsfs}
\usepackage{courier}
\usepackage[T1]{fontenc}
\usepackage[utf8]{inputenc}
\usepackage[all]{xy}
\usepackage[
  margin=1.5cm,
  includefoot,
  footskip=30pt,
  ]{geometry}
\usepackage[colorlinks=true,citecolor=cyan,linkcolor=magenta, urlcolor=blue, filecolor=green
]{hyperref}
\usepackage{hyperref}
\usepackage{indentfirst}

\newcommand{\cF}{\mathcal{F}}

\newcommand{\cO}{\mathcal{O}}

\newcommand{\bA}{\mathbb{A}}
\newcommand{\bC}{\mathbb{C}}
\newcommand{\bF}{\mathbb{F}}

\newcommand{\bQ}{\mathbb{Q}}

\newcommand{\bZ}{\mathbb{Z}}

\newcommand{\fl}{\mathfrak{l}}
\newcommand{\ff}{\mathfrak{f}}
\newcommand{\fm}{\mathfrak{m}}
\newcommand{\fp}{\mathfrak{p}}

\newcommand{\fP}{\mathfrak{P}}

\DeclareMathOperator{\Gal}{Gal}
\DeclareMathOperator{\GL}{GL}

\DeclareMathOperator{\Norm}{Nm}
\DeclareMathOperator{\Nm}{Nm}

\DeclareMathOperator{\Sp}{Sp}

\DeclareMathOperator{\Tr}{Tr}

\DeclareMathOperator{\ch}{char}

\DeclareMathOperator{\meas}{meas}
\DeclareMathOperator{\ord}{ord}

\theoremstyle{plain}% default style
\newtheorem{theorem}{Theorem}[section]
\newtheorem{lemma}[theorem]{Lemma}
\newtheorem{proposition}[theorem]{Proposition}
\newtheorem{corollary}[theorem]{Corollary}
\newtheorem{condition}[theorem]{Condition}

\theoremstyle{definition} % definition style

\newtheorem{remark}[theorem]{Remark}

\theoremstyle{remark} % remark style
\newtheorem*{example}{Example}

% For `Sha':
\DeclareFontFamily{U}{wncy}{}
\DeclareFontShape{U}{wncy}{m}{n}{<->wncyr10}{}
\DeclareSymbolFont{mcy}{U}{wncy}{m}{n}
\DeclareMathSymbol{\Sha}{\mathord}{mcy}{"58}

\newcommand{\lp}{\left(}
\newcommand{\rp}{\right)}

\newcommand{\lara}[1]{\langle #1 \rangle}
\newcommand{\lbrb}[1]{\lp #1 \rp}
\newcommand{\lcrc}[1]{\left\{ #1 \right\}}

\newcommand{\quadsym}[2]{\left( \frac{#1}{#2} \right)}
\newcommand{\pthsym}[2]{\left( \frac{#1}{#2} \right)_p}

\title{Nonvanishing of $L$-function of some Hecke characters on cyclotomic fields}
\author{Keunyoung Jeong}
\address{Department of Mathematics Education, Chonnam National University, 77, Yongbong-ro, Buk-gu, Gwangju 61186, Korea}
\email{keunyoung@jnu.ac.kr}

\author{Yeong-Wook Kwon}
\address{Department of Mathematical Sciences, Ulsan National Institute of Science and Technology, UNIST-gil 50, Ulsan 44919, Korea}
\email{ywkwon@unist.ac.kr}

\author{Junyeong Park}
\address{Department of Mathematical Sciences, Ulsan National Institute of Science and Technology, UNIST-gil 50, Ulsan 44919, Korea}
\email{junyeongp@gmail.com}

\subjclass[2010]{Primary 11G40, Secondary 11G10, 11F27}
%\blfootnote{2020 \textit{Mathematics Subject Classification.} Primary 11G40, Secondary 11G10, 11F27}
\keywords{Fermat curve, Hyperelliptic curve, $L$-function, Nonvanishing}

\linespread{1.2}

\begin{document}

\maketitle

\begin{abstract}
In this paper, we show the nonvanishing of some Hecke characters on cyclotomic fields.
The main ingredient of this paper is a computation of eigenfunctions and the action of Weil representation at some primes including the primes above $2$.
As an application, we show that 
for each isogeny factor of the Jacobian of the $p$-th Fermat curve where $2$ is a quadratic residue modulo $p$, there are infinitely many twists whose analytic rank is zero.
Also, for a certain hyperelliptic curve over the $11$-th cyclotomic field whose Jacobian has complex multiplication, there are infinitely many twists whose analytic rank is zero.
\end{abstract}

\section{Introduction}

In his work \cite{Yan97}, Yang gave an explicit relation between the special value of the $L$-function of the Hecke character and certain theta liftings.
To compute the theta lifting explicitly one needs to study the action of Weil representation on the space of Schwarz functions, and its eigenfunction at each place.
Yang and Stoll--Yang \cite{Yan97, Yan99, SY03} gave this information for some places.
Using these results, the nonvanishing of $L$-functions of certain elliptic curves \cite{Yan97, Yan99}, hyperelliptic curves \cite{SY03, JPY} was proved. 

In this paper, we show nonvanishing of $L$-functions of some Hecke characters by considering Weil representation and certain choices of eigenfunctions.
Yang \cite{Yan99} gave nice choices of the eigenfunctions over many places. However, a particular concern is needed at primes above $2$.
\cite{SY03} is a nice illustration of this problem for Hecke characters on $\bQ(\zeta_5)$, attached to the hyperelliptic curves $y^2 = x^5 + A$ where $A$ is an integer.
In contrast, we try to consider as many Hecke characters as possible. 
Consequently, we can obtain the following version of the nonvanishing theorem, which covers several Hecke characters over the cyclotomic field.

\begin{theorem} \label{main:main}
	Let $K$ be the $p$-th cyclotomic field, let $F$ be its maximal totally real subfield, and let $\epsilon_{K/F}$ be the nontrivial quadratic character on $\bA_{F}^\times$. Let $\chi$ be a Hecke character on $K$ whose restriction to $F$ is $\epsilon_{K/F}$ and root number is $+1$. 
	We further assume that $\chi$ satisfies the following conditions:
	\begin{itemize}
	\item (Infinite place condition) For each real place $v$ of $F$ and a complex place $w \mid v$, $\chi_w(z) = |z|_w/z$. Also, $\alpha$ in the equation (\ref{eqn:alpha}) of Theorem \ref{thm:Yang97main} is totally positive.
	\item (Splitting condition) If a prime $v \nmid 2$ of $F$ divides the conductor of $\chi$, then $v$ splits in $K/F$.
	\item ($p$-condition) The conductor exponent of $\chi$ at the unique prime above $p$ is $1$.
	\item (Weil index condition) If primes above $2$ are inert in $K/F$, $\chi$ satisfies % (\ref{eqn:Weilcondat1}) and (\ref{eqn:Weilcondatzeta}) in 
        Condition \ref{cond:Weil}.
	\end{itemize}
	Then $L\left(1, \chi\right) \neq 0$.	
\end{theorem}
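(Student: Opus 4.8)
The overall strategy is to feed $\chi$ into Yang's explicit central-value formula, Theorem \ref{thm:Yang97main}, and then prove that the resulting quantity is nonzero by a purely local analysis. Under the infinity-type and restriction hypotheses, that theorem expresses $L(1,\chi)$ as an explicit nonzero constant times the squared absolute value of the $\alpha$-th Fourier coefficient of a theta lift, where $\alpha$ is the totally positive element of equation (\ref{eqn:alpha}). Because the theta kernel factors as a restricted tensor product over the places of $F$, one may take the test function to be a pure tensor $\phi=\bigotimes_v \phi_v$ of local eigenfunctions of the Weil representation, whereupon the Fourier coefficient factors as a product $\prod_v W_v(\phi_v)$ of local Whittaker-type integrals. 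The hypothesis that the root number is $+1$ is what prevents the functional equation from forcing the central value to vanish; granting it, the nonvanishing of $L(1,\chi)$ reduces to the assertion that at every place $v$ there is an eigenfunction $\phi_v$ with $W_v(\phi_v)\neq 0$.

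First I would dispatch the places where the computation is classical. At each archimedean place the infinite-place condition $\chi_w(z)=|z|_w/z$ fixes the weight and hence the archimedean eigenfunction, and Yang's archimedean integral is then a nonzero gamma-type factor; that $\alpha$ is totally positive is exactly what keeps this factor nonzero. At the finite places away from $2$, $p$, and the conductor, all data are unramified and $W_v(\phi_v)$ is a nonzero unramified local density. At a prime $v\nmid 2$ dividing the conductor, the splitting condition forces $v=w\bar w$ in $K/F$, so locally $\chi$ decomposes into a pair of characters of $F_v^\times$ and the eigenfunction can be taken supported on a coset where the integral is a nonzero Gauss-sum-type quantity. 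At the unique prime above $p$, the $p$-condition keeps the conductor exponent equal to $1$, and a direct computation of the Weil representation on the corresponding small lattice shows $W_v(\phi_v)\neq 0$.

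The main obstacle is the dyadic places, where the Weil representation is genuinely delicate: the metaplectic cocycle, the relevant Weil indices, and the eigenfunctions all behave irregularly at $2$, and an ill-chosen Schwartz function can produce a vanishing local integral. When a prime above $2$ splits or ramifies in $K/F$ one can still produce an explicit eigenfunction supported on a suitable lattice coset for which $W_v(\phi_v)\neq 0$. When the primes above $2$ are inert the local Weil index creates a genuine obstruction, and it is precisely to circumvent it that one imposes the Weil index condition, Condition \ref{cond:Weil}: under this condition one exhibits an eigenfunction at the dyadic place whose local integral is nonzero. Determining the eigenfunctions and the explicit action of the Weil representation at the primes above $2$, and verifying the nonvanishing of $W_v(\phi_v)$ under Condition \ref{cond:Weil}, is the technical heart of the argument. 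Once every local factor is shown to be nonzero their product is nonzero, and together with the $+1$ root number this yields $L(1,\chi)\neq 0$.
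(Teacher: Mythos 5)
Your overall frame---invoke Theorem \ref{thm:Yang97main}, take $\phi=\prod_v\phi_v$ a pure tensor of local eigenfunctions, and argue place by place, with the dyadic inert places handled via Condition \ref{cond:Weil}---is indeed the paper's strategy. But there is a genuine gap at the decisive global step. The quantity controlled by Yang's formula is not a product of local integrals: by \eqref{eqn:theta11} it is $\theta_\phi(1)(1)=\tfrac12\sum_{x\in F}\prod_v I_v(x)$ with $I_v(x)=\int_{U_v}\omega_{\alpha,\chi,v}(g)\phi_v(x)\,dg$, i.e.\ a \emph{sum over the global points} $x\in F$ of products of local terms. The value $\theta_\phi(1)(1)$ is a value of the theta lift on the torus $G$, not a single Fourier--Whittaker coefficient, and it admits no Euler-product factorization; so your reduction ``nonvanishing of $L(1,\chi)$ reduces to the assertion that at every place there is an eigenfunction with $W_v(\phi_v)\neq0$'' is insufficient: even with every local integral nonzero, the terms indexed by different $x\in F$ could cancel.

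The missing idea is the positivity mechanism by which the paper precludes that cancellation. One chooses the $\phi_v$ so that for every finite $v$ the function $x\mapsto I_v(x)$ equals a nonzero constant $c_v$ \emph{independent of $x$} times the characteristic function of a compact set $X_v$ (Propositions \ref{prop:Ivabovep}, \ref{prop:Ivsplit}, \ref{prop:Ivat2}); pulling out $\prod_v c_v$, the finite places then contribute only the condition $x\in X:=\bigcap_v(X_v\cap F)$. At the real places, the infinite place condition together with total positivity of $\alpha$ yields the Gaussians $\phi_v(x)=|2\sigma_w(\alpha\delta^3)|^{1/4}e^{-\pi|\sigma_w(\alpha\delta^3)|\sigma_v(x)^2}$ of Proposition \ref{prop:Ivaboveinf}, which are strictly positive on $F$. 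Hence $\sum_{x\in X}\prod_{v\,\mathrm{real}}\phi_v(x)$ is a sum of positive terms and cannot vanish, and that---not the root number---is what forces $L(1,\chi)\neq0$; the hypothesis that the root number is $+1$ is used only so that Theorem \ref{thm:Yang97main} applies at all (it guarantees the existence of $\alpha$ in \eqref{eqn:alpha} and the identity itself). So to complete your argument you must upgrade ``each local integral is nonzero'' to ``each $I_v(\cdot)$ is a nonzero constant times a characteristic function,'' which is precisely the content of the paper's local propositions. (A small further inaccuracy: at primes above $2$ the extension $K/F$ is unramified, so only the split and inert cases occur; the ramified dyadic case you mention does not arise here.)
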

%For the precise statement of the Weil index condition, see Condition \ref{cond:Weil}. We also note that the definition of $\alpha$ is given in Section \ref{sec:sketch}.

As an application, we get an infinite family of Hecke characters with nonvanishing $L$-function from twists of the Fermat curve.
Given an odd prime number $p$, let
\begin{align*}
    \cF(p) : x^p + y^p = 1
\end{align*}
be the $p$-th Fermat curve, and let $J(p)$ be its Jacobian.
For integers $r, s, t$ satisfying $r + s + t = p$, we define a curve
\begin{align*}
    C_{r, s, t} : y^p = x^r(1-x)^s
\end{align*}
and let $J_{r, s, t}$ be its Jacobian.
It is known that $J(p)$ and $J_{1,1,p-2}\times J_{1,2,p-3}\times\cdots\times J_{1,p-2,1}$ are isogenous (cf. \cite[\S 2]{GR78}).
Furthermore, $J_{r, s, t}$ has complex multiplication. There is a Hecke character $\chi_{r,s,t}$ whose $L$-function is equal to the $L$-function of $J_{r,s,t}$.
%Actually, this is a Hecke character coming from a Jacobi sum.
The properties of $\chi_{r, s, t}$ were extensively studied by \cite{GR78, Roh92}.
More recently, Shu \cite{Shu} studied the twists of $\mathcal{F}(p)$ of the following form:
\begin{align*}
    \cF(p)^{(d)} : x^p + y^p = d,\quad d\in\mathbb{Q}^\times/\mathbb{Q}^{\times p}.
\end{align*}
The Jacobian of this curve is isogenous to the product of the Jacobian $J_{r,s,t}^{(d)}$ of
\begin{align*}
    C_{r, s, t}^{(d)} : y^p = x^r(d - x^s).
\end{align*}
Let $\chi_{r, s, t}^{(d)}$ be the Hecke character associated to $J_{r, s, t}^{(d)}$.

If $2$ is a quadratic residue modulo $p$, then one can show that there are infinitely many $d$
such that $\chi_{r, s, t}^{(d)}$ satisfies the condition of Theorem \ref{main:main}.

\begin{theorem} \label{main:Fermat}
	Let $p$ be a prime such that all places of $F$ above $2$ split in $K/F$.
	Then for each $r, s, t$ satisfying $r+s+t=p$, there are infinitely many $d$ such that $L(1, \chi_{r, s, t}^{(d)} ) \neq 0$.
\end{theorem}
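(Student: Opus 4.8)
The plan is to deduce Theorem \ref{main:Fermat} from Theorem \ref{main:main} by exhibiting, for each fixed triple $(r,s,t)$, an infinite family of $d$ for which the Hecke character $\chi_{r,s,t}^{(d)}$ satisfies all four bullet-point hypotheses of Theorem \ref{main:main}. The decisive simplification comes from the standing assumption that every place of $F$ above $2$ splits in $K/F$: under it the Weil index condition is imposed only ``if primes above $2$ are inert'', hence is automatically satisfied, and one is left to arrange the infinite place condition, the splitting condition, the $p$-condition, and root number $+1$.

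First I would record the structure of the base character. By \cite{GR78, Roh92} the Hecke character $\chi_{r,s,t}$ attached to $J_{r,s,t}$ is a Jacobi-sum character on $K=\bQ(\zeta_p)$ whose restriction to $\bA_F^\times$ is the quadratic character $\epsilon_{K/F}$, whose infinity type at each complex place $w$ is $z\mapsto |z|_w/z$, and whose conductor is supported only at the unique prime $\fp$ above $p$, with conductor exponent $1$. Twisting the curve $C_{r,s,t}$ by $d$ multiplies this character by a finite-order character $\eta_d$ built from the $p$-th power residue symbol $\pthsym{d}{\cdot}$; concretely $\chi_{r,s,t}^{(d)}=\chi_{r,s,t}\cdot\eta_d$, and $\eta_d$ is ramified exactly at the primes dividing $d$, together with a possible contribution at $\fp$.

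Next I would cut down the admissible $d$ by local conditions so as to secure three of the four hypotheses uniformly. Choosing $d$ to be a totally positive rational integer keeps the archimedean type of $\chi_{r,s,t}^{(d)}$ equal to that of $\chi_{r,s,t}$ and makes the relevant $\alpha$ of equation (\ref{eqn:alpha}) totally positive, giving the infinite place condition. Requiring every prime $\ell\mid d$ with $\ell\neq p$ to be such that the primes of $F$ above $\ell$ split in $K/F$---equivalently $-1\notin\langle\ell\rangle\subset(\bZ/p)^\times$---forces each odd prime in the conductor of $\eta_d$, hence of $\chi_{r,s,t}^{(d)}$, to split, yielding the splitting condition. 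Imposing that $d$ be a $p$-adic $p$-th power, so that $\eta_d$ is unramified at $\fp$, keeps the conductor exponent at $\fp$ equal to that of $\chi_{r,s,t}$, namely $1$, giving the $p$-condition. By Chebotarev there are infinitely many primes $\ell$ of the required splitting type, and taking $d$ to range over products of such primes lying in the prescribed residue class modulo $p$ produces an infinite set of candidates.

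The final and most delicate step is the root number. The global root number $W(\chi_{r,s,t}^{(d)})$ is a product of local root numbers, and twisting by $\eta_d$ alters the local factors at $\fp$, at the primes dividing $d$, and---through the sign of $d$---at the archimedean places. I would compute how $W$ depends on $d$ within the family already constrained above, isolating the congruence and sign data of $d$ that control it, and then select the subfamily on which $W=+1$; one expects this subfamily to stay infinite because the controlling data can be prescribed independently of the infinitely many splitting primes used to build $d$. This root-number computation, together with checking that the sign choice forcing $W=+1$ is genuinely compatible with the totally positive $\alpha$ of the infinite place condition and with the $p$-condition, is where I expect the real work to lie. Once $W=+1$ is arranged for infinitely many $d$, Theorem \ref{main:main} applies verbatim and gives $L(1,\chi_{r,s,t}^{(d)})\neq 0$ for all of them.
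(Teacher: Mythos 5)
Your overall strategy---reduce to Theorem \ref{main:main}, note that the split hypothesis at $2$ voids the Weil index condition, and impose local conditions on $d$ for the remaining hypotheses---is exactly the paper's (Theorem \ref{thm:mainFermat}), but your treatment of the $p$-condition has a genuine gap. You assert that the base character $\chi_{r,s,t}$ has conductor exponent $1$ at $\fp$ and then take $d$ to be a $p$-adic $p$-th power so that the twist leaves this exponent unchanged. Neither half survives scrutiny: by Lemma \ref{lem:chi rst d} (iv) (i.e., Shu's computation), the exponent at $\fp$ is $1$ precisely when $u(r^rs^s(t-p)^td^{r+s})\geq 2$, and for $d=1$ this congruence can fail, in which case the exponent is $2$. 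Worse, your restriction to $p$-th-power $d$ makes the failure unrepairable: writing $x=\epsilon(1-p)^b$, a $p$-th power $d$ contributes a $b$-component divisible by $p$, so $u(xd^{r+s})=u(x)$, and if $u(r^rs^s(t-p)^t)=1$ then \emph{no} $d$ in your family satisfies the $p$-condition. The paper's proof does the opposite of what you propose: it deliberately uses $d$ ramified in the $p$-direction, choosing $d_0\in(\bZ/p^2\bZ)^\times$ with $(d_0^{(r+s)(p-1)})^{-1}\equiv x^{p-1}\pmod{p^2}$ so that the twist \emph{cancels} the obstruction, and the structure $(\bZ/p^2\bZ)^\times\cong\bZ/p\bZ\oplus\bZ/(p-1)\bZ$ lets one prescribe this $\bZ/p\bZ$-component while independently making $d_0$ a quadratic residue mod $p$; primes $d\equiv d_0\pmod{p^2}$ then satisfy everything, and there are infinitely many by Dirichlet.

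On the root number, you correctly identify it as the remaining work but leave it as a plan; the paper closes it with Shu's explicit formula (Lemma \ref{lem:globalShu}): the global root number is $\quadsym{2}{p}\prod_{\ell\mid d}\quadsym{\ell}{p}$. No further ``subfamily selection'' is needed, because your own splitting constraint already forces the sign: a rational prime $\ell$ whose primes in $F$ split in $K/F$ has odd order in $(\bZ/p\bZ)^\times$, hence is automatically a quadratic residue mod $p$, so $\quadsym{\ell}{p}=+1$; likewise the hypothesis that all places above $2$ split gives $\quadsym{2}{p}=+1$. So the root number is $+1$ for free on the paper's family. Your archimedean discussion is essentially fine (it matches Corollary \ref{cor:Fermatalpha}, and since $d$ is taken in $\bQ^\times/\bQ^{\times p}$ with $p$ odd, the sign of $d$ is not even an issue), but as written your proposal proves the theorem only for those $(r,s,t)$ with $u(r^rs^s(t-p)^t)\geq 2$, which is the special case the paper's residue-class trick is designed to avoid.
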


We note that Diaconu and Tian \cite{DT} proved that there are infinitely many $d$ such that the $L$-function of $\cF(p)^{(d)}$ over a totally real extension $F/\bQ$ of odd degree does not vanish.

For given two newforms on $\GL_2$, it is an important problem to find a character that makes the $L$-functions of newforms twisted by the character are simultaneously nonvanishing (cf. \cite{BFH}).
As a corollary of Theorem \ref{main:Fermat}, we obtain a simultaneous nonvanishing of $L$-functions of Hecke characters $\chi_{r, s, t}$. See Corollary \ref{cor:simul} and the example followed.
In this viewpoint, Diaconu--Tian gave simultaneous nonvanishing of $\chi_{1, s, p-s-1}$ over a totally real field for all $s = 1, \cdots, p-2$.

When the primes above $2$ are inert in $K/F$, the situation becomes more complicated since
we need to check the Weil index condition.
It is involved with a computation of a certain Gauss sum. Even though we cannot get a vanishing criterion of this Gauss sum of this type, it can be numerically checked.
In particular, we give a sufficient condition for the Weil index condition when $p = 11$.

\begin{theorem} \label{main:on11}
	Let $\chi$ be a Hecke character on $K = \bQ(\zeta_{11})$ such that $\chi_2$ is a nontrivial unramified quadratic character. 
 If $\alpha$ in Theorem \ref{thm:Yang97main} satisfies $\ord_2(\alpha) \equiv 0 \pmod{2}$, then $\chi$ satisfies the Weil index condition \ref{cond:Weil}.
\end{theorem}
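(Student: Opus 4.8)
The plan is to verify Condition \ref{cond:Weil} by reducing it to the evaluation of a single local Weil index at the unique prime $v$ of $F$ above $2$. First I would record the splitting behaviour: since $2$ is a primitive root modulo $11$ (it has order $10$ in $(\bZ/11)^\times$), its image in $(\bZ/11)^\times/\{\pm1\}$ has order $5$, so $2$ is inert in $F = \bQ(\zeta_{11})^+$, with $F_v$ the unramified extension of $\bQ_2$ of degree $5$ and residue field $\bF_{2^5}$; the unique prime of $F$ above $2$ then stays inert in $K/F$. Hence the Weil index condition has to be checked only at this one place, and the relevant local datum is the pair $(\chi_2, \alpha)$, where $\chi_2$ is the nontrivial unramified quadratic character of $F_v^\times$ and $\alpha$ is the element from equation (\ref{eqn:alpha}) of Theorem \ref{thm:Yang97main}.

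Next I would write out the action of the Weil representation at $v$ on the chosen local eigenfunction. Here the Weil index $\gamma_{\psi_v}(\alpha)$, the normalized Weil constant of the character $x \mapsto \psi_v(\alpha x^2)$, enters, and Condition \ref{cond:Weil} amounts to the assertion that this eighth root of unity equals a prescribed value. The key structural input is that $\chi_2$ is unramified quadratic, hence detects only the parity of the valuation, and that $\ord_2(\alpha)$ is even. Writing $\alpha = \varpi^{2k} u$ with $\varpi$ a uniformizer and $u$ a unit, the square factor $\varpi^{2k}$ leaves $\alpha$ unchanged modulo $(F_v^\times)^2$, so $\gamma_{\psi_v}(\alpha) = \gamma_{\psi_v}(u)$ and the problem collapses to a Gauss sum over the residue field $\bF_{2^5}$. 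I would then evaluate this sum and confirm it produces exactly the value demanded by Condition \ref{cond:Weil}.

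The main obstacle is precisely this dyadic Gauss sum. At residue characteristic $2$ the familiar quadratic Gauss sum formulas valid in odd residue characteristic break down: the square classes of $F_v^\times$ are more numerous, the conductor of $\psi_v$ interacts with the different, and the Weil index no longer depends on $u$ through a simple Legendre symbol. Consequently one cannot read the answer off a closed formula and must instead analyze the $2$-adic square classes of $F_v$ directly; as the authors note, for a fixed small $p$ the resulting index can in any case be checked by explicit numerical computation. The role of the parity hypothesis $\ord_2(\alpha) \equiv 0 \pmod{2}$ is exactly to suppress the uniformizer contribution, which would otherwise twist the Weil index by a sign governed by the Hilbert symbol $(\varpi, u)_v$, so that the remaining unit Gauss sum lands in the class required by Condition \ref{cond:Weil}, completing the verification.
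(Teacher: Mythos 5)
Your proposal misreads the shape of Condition \ref{cond:Weil}, and this is a genuine gap rather than a presentational one. The condition is not the evaluation of a single Weil constant $\gamma_{\psi_v}(\alpha)$: part (1) is an identity, \eqref{eqn:Weilcondat1}, required for \emph{every} $g_a$ in the infinite compact group $\Gamma_1$, in which the left-hand side $\xi_{\alpha,\chi,v}(g_a)$ unwinds (via \eqref{eqn:xi rep}) into three $a$-dependent factors $\chi_2(-\delta y(g_a-1))$, the Hilbert symbol $(a\Delta,a^2+\Delta)_{F,2}$, and the Weil index $\gamma_{F,2}(a^2+\Delta,\psi)$, to be matched against $i^{T}$ with $T=\Tr_{F_v/\bQ_2}\bigl(\tfrac{a\alpha u^2\Delta^2}{a^2+\Delta}\bigr)$; and part (2) is a separate nonvanishing condition \eqref{eqn:Weilcondatzeta} at the $p$-th roots of unity, which your proposal never mentions. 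Your plan ``evaluate this sum and confirm'' therefore has no mechanism for discharging infinitely many conditions at once. The paper's proof supplies exactly this mechanism in three steps you are missing: (i) Proposition \ref{prop:Gammathree} shows \eqref{eqn:Weilcondat1} holds on all of $\Gamma_3$, because for $g_a\in\Gamma_3$ one has $a^{-1}\in 4\cO_{F_2}$, so $1+\Delta/a^2\equiv 1 \pmod{16}$ is a square by the local square theorem, killing both the Hilbert symbol and the Weil index factor and forcing $T\in 4\bZ_2$; (ii) Proposition \ref{prop:Weilgammaone} (resting on the character property in Lemma \ref{lem:SY5.35.4}) reduces the problem to coset representatives of $\Gamma_3$ in $\Gamma_1$, whose structure $\bZ/2\oplus\bZ/2\oplus(\bZ/4)^{4}$ and explicit generators $g_a$ are computed via Galois-ring unit groups in Proposition \ref{prop:str of X} and Lemma \ref{lem:repofga}; (iii) the resulting six identities are verified numerically in Lemma \ref{lem:11Gamma1}, using that $\chi_2$ unramified quadratic gives $\chi_2(-\delta y(g-1))=(-1)^{\ord_2(g-1)+\ord_2(y)}$ together with explicit computations of $\ord_2(y)$ and $\ord_2(g-1)$.

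Two of your structural remarks are correct and do match the paper: $2$ is inert in $F$ and in $K/F$, so only one dyadic place intervenes; and the hypotheses enter exactly where you say --- the unramified quadratic $\chi_2$ sees only valuation parities, and $\ord_2(\alpha)\equiv 0\pmod 2$ lets one replace $\alpha$ by a unit (indeed by $1$ in the trace computation of Lemma \ref{lem:11Gamma1}, since $T \bmod 4$ depends only on $\alpha \bmod 2$). But your technical description of the dyadic computation is also off: by \eqref{eqn:Weildyadic} a dyadic Weil index is a Gauss sum over $\cO_{F_v}/\pi_v^{\widetilde m}\cO_{F_v}$ with $\widetilde m=\overline m+\ord_v(2)$ --- a sum at level $4$ or $8$, not over the residue field $\bF_{2^5}$ --- and the indices actually needed are the $a$-varying quantities $\gamma_{F,2}(a^2+\Delta,\psi)$, not one fixed $\gamma_{\psi_v}(\alpha)$. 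Finally, even after your reduction modulo squares, you would still owe an argument for part (2) of Condition \ref{cond:Weil}; the paper dispatches it through the absolute-value bound of Remark \ref{rmk:Weilcond roots} together with the numerical data, and nothing in your outline addresses it.
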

%For the definition of $\alpha$, see Section \ref{sec:sketch}.
There is an example of a Hecke character that is ramified at $2$ but satisfies the Weil index condition. Let
\begin{align*}
	C_A : y^2 = x^{11} + A^2, \qquad A \in \bZ
\end{align*}
be a hyperelliptic curve, and let $\chi_A$ be a Hecke character on $\bQ(\zeta_{11})$ corresponds to $C_A$.
Note that $C_A$ is a twist of $C_1$, and $\chi_A$ is ramified at $2$ (see Lemma \ref{lem:11hyperelliptic} (iii)).

\begin{theorem} \label{main:hyperelliptic}
Assume that $A$ is the $11$-th power free integer such that
\begin{itemize}
\item $(A, 22) = 1$,
\item $11^2 \mid (A^{10}-1)$,
\item $A \equiv 1 \pmod{4}$, and
\item all primes dividing $A$ split in $K/F$.
\end{itemize}
	Then $L\left(1, \chi_A \right) \neq 0$.
\end{theorem}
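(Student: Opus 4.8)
The plan is to check that $\chi_A$ fulfills each of the four hypotheses of Theorem \ref{main:main} with $p = 11$ and then apply that theorem. Since the quadratic residues modulo $11$ are $\{1,3,4,5,9\}$, the prime $2$ is a nonresidue; as $2$ has order $10$ in $(\bZ/11)^\times$ it is inert in $K = \bQ(\zeta_{11})$, and one checks that the unique prime above $2$ is likewise inert in $K/F$. Thus the Weil index condition is genuinely in force, and it will be the heart of the argument. I would first record from Lemma \ref{lem:11hyperelliptic} the explicit local description of the Hecke character $\chi_A$ attached to $C_A$: that its restriction to $\bA_F^\times$ equals $\epsilon_{K/F}$, that its root number is $+1$, and in particular (part (iii)) that $\chi_A$ is ramified at the prime above $2$.

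The infinite place condition follows from the CM type of the Jacobian of $C_A$, together with the observation that $\alpha$ in Theorem \ref{thm:Yang97main} may be taken totally positive precisely because the curve is twisted by the square $A^2$, which is totally positive in $F$. The splitting condition is immediate from the hypothesis that every prime dividing $A$ splits in $K/F$, since the prime-to-$2$ part of the conductor of $\chi_A$ is supported on the primes dividing $A$. For the $p$-condition I would use the congruence $11^2 \mid (A^{10}-1)$: beyond the automatic $11 \mid (A^{10}-1)$, this places $A$ in the residue class modulo $11^2$ that minimizes the ramification of $C_A$ at $11$, forcing the conductor exponent at the prime above $11$ down to $1$. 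All three of these are local verifications using the explicit local characters of Lemma \ref{lem:11hyperelliptic}.

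The main obstacle is the Weil index condition. Because $\chi_A$ is ramified at $2$, I cannot appeal to Theorem \ref{main:on11}, whose criterion $\ord_2(\alpha) \equiv 0 \pmod 2$ presupposes an unramified $\chi_2$; instead Condition \ref{cond:Weil} must be checked directly at the inert prime above $2$. The plan is to write the ramified local character $\chi_{A,2}$ explicitly, determine the matching eigenfunction of the Weil representation at that prime, and evaluate the resulting local Weil index, which reduces to a Gauss sum over the residue field of the prime above $2$. The two remaining hypotheses are tailored to this computation: $(A,22)=1$ fixes the conductor and shape of $\chi_{A,2}$ so that the eigenfunction is the expected one, while $A \equiv 1 \pmod 4$ pins down the local quadratic datum so that the Gauss sum takes exactly the value demanded by Condition \ref{cond:Weil}. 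I expect this explicit Gauss-sum evaluation---done by hand, since no general vanishing criterion for it is available---to be the technical core; once it confirms the required Weil index, Theorem \ref{main:main} gives $L(1,\chi_A) \neq 0$.
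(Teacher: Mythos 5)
Your outer strategy --- checking the four hypotheses of Theorem \ref{main:main} for $p=11$ --- is the same as the paper's, and your treatment of the splitting condition and the $p$-condition agrees with its use of Lemma \ref{lem:11hyperelliptic}. But at the decisive step, the Weil index condition, there is a genuine gap. You correctly note that Theorem \ref{main:on11} cannot be quoted verbatim because its hypothesis requires $\chi_2$ unramified, but you then substitute an unexecuted plan: ``determine the matching eigenfunction \dots\ evaluate the resulting local Weil index, which reduces to a Gauss sum \dots\ I expect this \dots\ to confirm the required Weil index.'' Nothing is computed, and no reason is given that the value comes out as required --- indeed the paper stresses that no vanishing criterion for such sums is available and that they are checked numerically, so ``I expect'' cannot stand in for the verification. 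Your plan also misdescribes what must be checked: Condition \ref{cond:Weil} is not a single Gauss sum over the residue field but consists of (1) the identity \eqref{eqn:Weilcondat1} for \emph{every} $g = g_a \in \Gamma_1$, which the paper reduces to the finitely many generators of $\Gamma_1/\Gamma_3$ via Proposition \ref{prop:Weilgammaone} with $H = \Gamma_3$, Proposition \ref{prop:Gammathree}, Proposition \ref{prop:str of X} and Lemma \ref{lem:11Gamma1}, and (2) the non-vanishing \eqref{eqn:Weilcondatzeta} at the $p$-th roots of unity, which you never address (the paper disposes of it via Remark \ref{rmk:Weilcond roots}).

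What you are missing is the actual mechanism of the paper's proof: one reuses the \emph{proof} of Theorem \ref{main:on11}, not its statement. Inspecting that proof, the only facts about the local data it consumes are $\chi_2(2) = -1$, $\chi_2(-\delta) = 1$ (entering through $\chi_2(-\delta y(g-1))$ in \eqref{eqn:xi rep}), and $\alpha \in \bZ_2^\times$. Lemma \ref{lem:11hyperelliptic}(iii) gives $\chi_{A,2}(2) = -1$ and $\chi_{A,2}(-\delta) = (-1)^{\frac{A-1}{2}}$, which equals $+1$ precisely because $A \equiv 1 \pmod 4$ --- that is the true role of this hypothesis, rather than ``pinning down a quadratic datum'' inside your Gauss sum. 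You also never establish the oddness of $\alpha$: Lemma \ref{lem:SY5.35.4}, Proposition \ref{prop:Weilgammaone} and Lemma \ref{lem:11Gamma1} all presuppose $\alpha \in \cO_{F_v}^\times$, and your justification of the infinite place condition (total positivity ``because $A^2$ is totally positive'') neither matches the paper's argument nor yields $\ord_2(\alpha) \equiv 0 \pmod 2$; in the paper both come from Corollary \ref{cor:Hyperalpha}, the dyadic part via
\begin{align*}
\epsilon\left(\frac{1}{2}, \chi_{A, 2}, \frac{1}{2} \psi_{2}\right)\chi_{A, 2}(\delta) = (-1)^{1 + f_2} (-1)^{\frac{A-1}{2}} = 1.
\end{align*}
With these three inputs the entire dyadic computation of Theorem \ref{main:on11} applies verbatim, and Theorem \ref{main:main} concludes. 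As written, your proposal leaves the central verification undone and would force you to redo, by hand, precisely the computations (Lemma \ref{lem:11Gamma1} and the accompanying numerical checks) that the paper's one-line reduction avoids.
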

We remark that similar Hecke characters on $\bQ(\zeta_5)$ are considered in \cite{SY03, JPY}.

\section{Notations and the Sketch of the Proof} \label{sec:sketch}
In this section, we will introduce some notations, recall the previous results, and give an outline of this paper.

Let $K$ be the $p$-th cyclotomic field. Let $F\subseteq K$ be the maximal totally real subfield. Take $\delta = \zeta_p^{-\frac{p-1}{2}} - \zeta_p^{\frac{p-1}{2}}$ via the fixed embedding which identify $\zeta_p$ with $e^{\frac{2 \pi i}{p}}$. Let $\Delta = \delta^2 \in F$.
%We note that $\delta$ is an element satisfying $\delta^2 \in F^\times - F^{\times 2}$.
We denote by $x\mapsto\overline{x}$ the unique non-trivial element in $\Gal(K/F)$.	
%Let $W = K$ with the skew-Hermitian form $\lara{x, y} = \delta x\overline{y}$.
%Then, its isometry group is $K^1$, which is a kernel of norm from $K^\times$ to $F^\times$.
The quadratic character attached to $K/F$ is denoted by $\epsilon_{K/F}$, which can be regarded as a continuous homomorphism from $\bA_F^\times$ to $\bC$.
Let $\chi$ be a Hecke character of $K$ whose restriction to $F$ is $\epsilon_{K/F}$.
Let $\psi$ is an additive character of $\mathbb{A}_F$ given by $\psi = \prod_v \psi_v$ for $\psi_v(x) = e^{-2\pi i\lambda_v(x)}$ where 
\begin{align}\label{def:lambdav}
\xymatrix{\lambda_v : F_v \ar[r]^-{\mathrm{Tr}_{F_v/\mathbb{Q}_p}} & \bQ_p \ar[r] & \bQ_p/\bZ_p \ar[r] & \bQ/\bZ},
\end{align}
and $\psi_K := \psi \circ \mathrm{Tr}_{K/F}$.
From now on, we will denote $v$ for the prime or place of $F$ and $w$ for that of $K$.
%We sometimes clarify the field and the place, for example, $\psi_{F, v}$ and $\psi_{F, \fp}$.
We let rings act on additive characters defined on them by multiplication with arguments. For example,
\begin{align*}
\left(\frac{1}{2} \psi_{K,w}\right)(x):=\psi_{K,w}\left(\frac{1}{2}x\right).
\end{align*}
Let $G:=\ker(\Nm_{K/F})$ be the norm $1$ subgroup of $K^\times$.
Similarly we define $G_v = \ker(\Nm_{K_w/F_v})$ for a prime $v$ of $F$, and 
\begin{align*}
	\Gamma_k = \lcrc{g \in G_v : g \equiv 1 \pmod{\pi_v^k}}, \qquad
	\Gamma_k' = \lcrc{g = x + y\delta \in G_v : x \in F_v, y \in \pi_v^k \cO_{F_{v}}}.
\end{align*}
where $\pi_v$ is a uniformizer for $v$.
Given a character $\eta$ on $G$, we denote $\widetilde{\eta}(z) := \eta(z/\overline{z})$.
%The Tate local root number of $\chi\widetilde{\eta}$ is denoted by
%\begin{align*}
%	\epsilon\lbrb{\frac{1}{2}, \chi\widetilde{\eta}, \frac{1}{2}\psi_E}.
%\end{align*}
Given $\alpha \in F^\times/\Norm_{K/F} K^\times$, we denote by $\omega_{\alpha, \chi}$ a Weil representation acting on the space of Schwartz functions, and $\theta_{\phi}(\eta)$ by a theta lifting.
For details, see \cite[\S 0]{Yan97}.

The main tool of this paper to show the nonvanishing of $L$-functions is the following result of Yang.
\begin{theorem}[{\cite[Theorem 0.3]{Yan97}}] \label{thm:Yang97main}
	Let $\chi$ be a Hecke character on $K$ whose restriction to $F$ is $\epsilon_{K/F}$.
    Assume that the global root number of $\chi \widetilde{\eta}$ is $+1$ and 
    $\alpha \in F^\times/\Norm_{K/F} K^\times$ is the unique element such that
    \begin{align} \label{eqn:alpha}
        \epsilon\lbrb{\frac{1}{2}, (\chi\widetilde{\eta})_v, \frac{1}{2} \psi_v } (\chi\widetilde{\eta})_v(\delta) = \epsilon_{K/F, v}(\alpha)
    \end{align}
    holds for each place $v$ of $F$. 
    Then, there is a Schwartz function $\phi = \prod_v \phi_v$ on the adele ring $\bA_F$ and an absolute constant $c > 0$ depending on $\phi$ satisfying 
    \begin{align*}
        \frac{L\lbrb{1, \chi\widetilde{\eta}}}{L(1, \epsilon_{K/F})} = c \lara{\theta_{\phi}(\eta), \theta_{\phi}(\eta)}.
    \end{align*}
\end{theorem}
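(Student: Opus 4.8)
The plan is to realize $\theta_\phi(\eta)$ as the theta lift of $\widetilde\eta$ from the orthogonal group of a two-dimensional quadratic space to $\SL_2$, and then to compute its Petersson norm by the Rallis inner product formula; this is the incarnation of Waldspurger's theorem expressing a central $L$-value through a theta lift. First I would pin down the quadratic space. Writing $V_\alpha = (K, \alpha\Norm_{K/F})$ for the two-dimensional $F$-space whose form is $\alpha$ times the norm, its special orthogonal group is an inner form of $G = \ker(\Norm_{K/F})$, and $\omega_{\alpha,\chi}$ is the Weil representation of the dual pair $(\mathrm{O}(V_\alpha),\SL_2)$ on which the lift lives. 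The class $\alpha \in F^\times/\Norm_{K/F}K^\times$ is exactly the datum that selects $V_\alpha$ among its local forms: equation (\ref{eqn:alpha}) records the local root numbers $\epsilon(1/2,(\chi\widetilde\eta)_v,\frac12\psi_v)(\chi\widetilde\eta)_v(\delta)$ as the local invariants $\epsilon_{K/F,v}(\alpha)$, and the hypothesis that the global root number of $\chi\widetilde\eta$ is $+1$ guarantees, through the product formula for $\epsilon$-factors, that these signs are globally consistent, so that such an $\alpha$ exists and is unique.

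Second, I would evaluate $\lara{\theta_\phi(\eta),\theta_\phi(\eta)}$ by the seesaw identity attached to the pair $(\mathrm{O}(V_\alpha)\times \mathrm{O}(V_\alpha),\SL_2)$ against $(\mathrm{O}(V_\alpha),\SL_2\times\SL_2)$. Unfolding expresses the norm as an integral over $\mathrm{SO}(V_\alpha)(F)\backslash\mathrm{SO}(V_\alpha)(\bA_F)$ of a theta kernel paired with $\eta\otimes\overline\eta$; by the basic identity of the doubling method this equals a doubling zeta integral against a Siegel--Eisenstein series, and its value factors as a product of local zeta integrals times the central value $L(1,\chi\widetilde\eta)$. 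The normalization of the Eisenstein series contributes the factor $L(1,\epsilon_{K/F})$ in the denominator, producing the quotient in the statement.

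Third, and this is where I expect the real work to lie, is the local analysis. At each place $v$ I must choose the component $\phi_v$ so that the local zeta integral $\lara{\omega_{\alpha,\chi,v}(\cdot)\phi_v,\phi_v}$ paired against $(\eta\overline\eta)_v$ is a strictly positive real number; the subtlety is that such a test vector exists only when the local space $V_{\alpha,v}$ carries the sign $\epsilon_{K/F,v}(\alpha)$ dictated by (\ref{eqn:alpha}), which is precisely why $\alpha$ has to be chosen as above. At the archimedean places the infinity type $\chi_w(z)=|z|_w/z$ forces the lowest-weight Gaussian eigenfunction, while at the finite places---most delicately those above $2$, the source of the extra hypotheses in Theorem \ref{main:main}---one computes the Weil-representation action on the chosen eigenfunction explicitly to extract the local factor $c_v$. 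Since $c=\prod_v c_v$ is then a positive constant and $\lara{\theta_\phi(\eta),\theta_\phi(\eta)}\geq 0$ is manifestly a norm, the asserted identity $L(1,\chi\widetilde\eta)/L(1,\epsilon_{K/F})=c\lara{\theta_\phi(\eta),\theta_\phi(\eta)}$ follows with $c>0$.
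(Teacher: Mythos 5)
A point of bookkeeping first: the paper does not prove this statement at all. It is imported verbatim from \cite[Theorem 0.3]{Yan97} (in the normalization with $\frac{1}{2}\psi$, cf.\ Corollary \ref{cor:Fermatalpha}), and everything in the present paper treats it as a black box. So the only meaningful comparison is with Yang's original argument, and at the level of architecture your sketch is faithful to it: Yang does prove the identity by a Rallis-type inner product computation, i.e.\ a seesaw/doubling argument in which $\lara{\theta_{\phi}(\eta),\theta_{\phi}(\eta)}$ is unfolded against a Siegel--Eisenstein series via the Siegel--Weil formula, the doubling zeta integral producing the quotient $L(1,\chi\widetilde{\eta})/L(1,\epsilon_{K/F})$ and the normalization of the Eisenstein series accounting for the denominator. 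Your account of $\alpha$ is also exactly right: the hypothesis that the global root number of $\chi\widetilde{\eta}$ is $+1$, combined with the product formula for $\epsilon$-factors, makes the local signs in \eqref{eqn:alpha} globally coherent, so that $\alpha\in F^{\times}/\Norm_{K/F}K^{\times}$ exists and is unique, and local $\epsilon$-dichotomy is what makes the local theta lift nonvanishing precisely for that choice.

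Where your proposal goes concretely wrong is the identification of the dual pair, and this matters for where the lift lives. The representation $\omega_{\alpha,\chi}$ acts on Schwartz functions of \emph{one} $F$-variable, $S(\bA_F)$, through the embedding $\iota_{\alpha,v}\colon G_v\to\Sp_1(F_v)=\SL_2(F_v)$ written out in Section \ref{sec:above 2}, together with a splitting of the metaplectic cover over $G$ that is \emph{determined by} $\chi$: the relevant pair is the unitary pair $(U(1),U(1))$ attached to $K/F$, with $W=K$ regarded as a two-dimensional symplectic $F$-space and $F\subset K$ a Lagrangian. In your picture $(\mathrm{O}(V_{\alpha}),\SL_2)$ with $V_{\alpha}=(K,\alpha\Norm_{K/F})$, the Weil representation would act on $S(V_{\alpha}(\bA_F))$ (two $F$-variables), and $\theta_{\phi}(\eta)$ would be an automorphic form on metaplectic $\SL_2$ with $\lara{\cdot,\cdot}$ a Petersson norm over $[\SL_2]$. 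In Yang's theorem $\theta_{\phi}(\eta)$ is instead a function on (a cover of) the second copy of the \emph{compact} group $U(1)$, and the inner product is over the compact quotient $[G]$ --- which is exactly why the paper can later reduce nonvanishing of the norm to $\theta_{\phi}(1)(1)\neq 0$ and to the local integrals $I_v$ of \eqref{eqn:[G]I_v}. Relatedly, your sketch never explains how $\chi$ enters the Weil representation; in the orthogonal-pair picture it does not enter at all, which is the telltale sign that the unitary pair with the $\chi$-dependent (Harris--Kudla-type) splitting is the correct frame, and the appeal to Waldspurger is only a loose analogy. These defects are repairable, and your third step correctly locates the hard local work (test vectors with positive local factors, whence $c>0$), but as written the proposal is a sketch of the right genre with one genuine structural error, not a proof.
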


Since $\alpha \in F^\times/\Norm_{F/K}K^\times$, $\ord_v(\alpha) \equiv 0 \pmod{2}$ if and only if the representative of $\alpha$ can be choose in $\cO_{F_v}^\times$. In this case, we simply denote $\alpha \in \cO_{F_v}^\times$.

We consider the cases where $\eta$ is trivial.
Since $L(1, \epsilon_{K/F}) \neq 0$, it is enough to show $\theta_{\phi}(1)(1) \neq 0$ to show the nonvanishing of the special value of $L$-function of $\chi$.
Let
\begin{align*}
    U = \prod_v U_v
\end{align*}
be a subgroup of $\bA_F$ satisfying $G(\bA_F) = G(F) U$ and $G(F) \cap U = \lcrc{\pm 1}$
(cf. \cite[p. 277]{SY03}). Then,
\begin{align} \label{eqn:theta11}
    \theta_{\phi}(1)(1) &=
    \int_{[G]} \sum_{x \in F} \omega_{\alpha, \chi}(g) \phi(x) dg
    = \frac{1}{2} \int_U \sum_{x \in F} \omega_{\alpha, \chi}(g) \phi(x) dg
    = \frac{1}{2} \sum_{x \in F} \prod_v I_v(x)
\end{align}
where
\begin{align} \label{eqn:[G]I_v}
	[G] := G(F) \backslash G(\bA_F) \qquad \textrm{and}  \qquad I_v(x) := \int_{U_v} \omega_{\alpha, \chi, v}(g) \phi_v(x) dg.
\end{align}

Hence we need to choose the eigenfunction $\phi_v$ and study the action of Weil representation on $\phi_v$, to evaluate $I_v(x)$.
Here is a summary of the previous results of $I_v(x)$:
\begin{enumerate}
	\item When $v$ is real and $\chi$ satisfies the infinite place condition of Theorem \ref{main:main}, the eigenfuction $\phi_v$ and the action of Weil representation are given in \cite[Lemma 1.1]{Yan97}.
	\item If everything is unramified, i.e., $K/F$ is unramified at $v$, $\psi, \chi, \widetilde{\eta}$ is unramified at $v$, and $2\Delta\alpha \in \cO_{F_{v}}^\times$, then by \cite[Corollary 2.14]{Yan97} $\phi_v$ is the characteristic function of $\cO_{F_{v}}$ and the action of Weil representation is trivial.
	Therefore, $I_v(x)$ is a product of a nonzero constant and the characteristic function of $\cO_{F_v}$.
    \item If $v$ splits in $K/F$,  then by \cite[Corollary 2.14]{Yan97} $\phi_v = (\meas(\cO_{F_{v}}))^{-\frac{1}{2}}\ch(\cO_{F_{v}})$ is an eigenfunction, and the action of Weil representation is given in \cite[Corollary 2.10 (i)]{Yan97}.
    For the Hecke character attached to the hyperelliptic curves $y^2 = x^5 + A$, authors chose $\phi_v$ and showed that $I_v(x)$ is a product of a nonzero constant and a characteristic function of a compact set in \cite[\S 4]{JPY}. 
    Using a similar technique, we will show the same result on $I_v(x)$ for arbitrary Hecke character holds the condition of Theorem \ref{main:main} in Section \ref{sec:splitting and p}.
    \item If $v \nmid 2$ does not split in $K/F$, then by \cite[Corollary 2.4]{Yan97} $\phi_v$ can be taken to a unitary eigenfunction and the concrete choice of $\phi_v$ is given in \cite[Corollary 1.4, Proposition 1.5]{Yan99} for some cases.
    Furthermore, for $v \nmid 2$, the action is given in \cite[Proposition 1.2]{Yan99}.
    However, it is hard to control $I_v(x)$ (cf. \cite[(9), (10)]{JPY}).
    We do not consider this case in this paper.
    \item When $v \mid 2$ and $K = \bQ(\zeta_5)$, the eigenfunction $\phi_2$ for the Hecke character attached to the hyperelliptic curves $y^2 = x^5 + A$ and the action of Weil representation are given in \cite{SY03}.
\end{enumerate}

Therefore, the only primes $v$ of $F$ where the construction of the eigenfunction is not suggested in \cite{Yan97, Yan99, SY03, JPY} are primes above $2$ which do not split in $K/F$.
After quickly considering the real cases, the split cases and the ramified case in Section \ref{sec:splitting and p}, we mainly compute $I_v(x)$ for prime $v\mid 2$ over $\bQ(\zeta_p)$ where all primes above $2$ are inert in $K/F$ in Section \ref{sec:above 2}, under the Weil index condition \ref{cond:Weil}.
This leads to the proof of Theorem \ref{main:main}.
After that, we will give a sufficient condition for the Weil index condition where $K = \bQ(\zeta_{11})$, which is Theorem \ref{main:on11}.
As a concrete example, we will show the nonvanishing of $L$-functions of certain twists of Fermat curves and hyperelliptic curves in Section \ref{sec:Fermat}, which is Theorem \ref{main:Fermat} and Theorem \ref{main:hyperelliptic}, respectively

\section{Local computations at splitting primes and the prime above $p$} \label{sec:splitting and p}

The goal of this section is to compute $I_v(x)$ for $v$ being an infinite prime, the prime above $p$, and a prime which splits in $K/F$.
Let $\fp$ be the unique prime of $F$ above $p$ with a uniformizer $\pi_{\fp}$, and let $\fP$ be the unique prime of $K$ above $\fp$.

\begin{proposition} \label{prop:Ivaboveinf}
	Let $v$ be a real place of $F$ and let $w$ be the complex place of $K$ dividing $v$. Suppose that $\chi_w(z) = |z|_w/z$ and $\alpha$ in (\ref{eqn:alpha}) satisfies $\sigma_v(\alpha) > 0$. Then
	\begin{align*}
		\phi_v(x) = |2\sigma_w(\alpha \delta^3)|^{\frac{1}{4}} e^{-\pi |\sigma_w(\alpha \delta^3)|\sigma_v(x)^2}
	\end{align*}
	is an eigenfunction with trivial eigencharacter.
\end{proposition}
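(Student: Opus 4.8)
The plan is to obtain this by specializing Yang's archimedean computation in \cite[Lemma 1.1]{Yan97} to the infinity type fixed by the infinite place condition. At the real place $v$ the local Weil representation $\omega_{\alpha, \chi, v}$ acts on $L^2(F_v) \cong L^2(\bR)$, the local norm-one group $G_v = \ker\lbrb{\Nm_{K_w/F_v}}$ being the unit circle, and \cite[Lemma 1.1]{Yan97} identifies the lowest-weight Gaussian as an eigenfunction together with its eigencharacter. Thus the task reduces to four essentially bookkeeping steps: translating Yang's Gaussian-width parameter into the quantity $\sigma_w(\alpha\delta^3)$, checking the sign that selects the decaying Gaussian, pinning down the normalizing constant, and confirming that the eigencharacter is trivial.

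First I would record the archimedean behaviour of $\delta$. Since $\delta = \zeta_p^{-\frac{p-1}{2}} - \zeta_p^{\frac{p-1}{2}}$ satisfies $\overline\delta = -\delta$, its image $\sigma_w(\delta)$ is purely imaginary at every complex place $w$; for the fixed embedding one has explicitly $\sigma_w(\delta) = -2i\sin\lbrb{\pi/p}$. Hence $\sigma_w(\delta)^3$ is again purely imaginary, and because $\alpha \in F^\times$,
\begin{align*}
	\sigma_w\lbrb{\alpha\delta^3} = \sigma_v(\alpha)\,\sigma_w(\delta)^3 \in i\bR, \qquad \left|\sigma_w\lbrb{\alpha\delta^3}\right| = \sigma_v(\alpha)\,\left|\sigma_w(\delta)\right|^3,
\end{align*}
the hypothesis $\sigma_v(\alpha) > 0$ making this modulus strictly positive. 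This positivity is exactly what renders the exponent $-\pi\left|\sigma_w(\alpha\delta^3)\right|\sigma_v(x)^2$ negative definite, so that $\phi_v \in L^2(\bR)$; moreover the sign condition on $\alpha$ is what selects our decaying (holomorphic) Gaussian rather than its complex conjugate, which would be the eigenfunction attached to the opposite infinity type.

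Next I would match the shape of the Gaussian and fix the constant. Yang's Weil representation at $v$ is built from $\tfrac12\psi_v$ together with the quadratic form attached to $K_w/F_v$ and twisted by $\alpha$; this form contributes $\Delta = \delta^2$, while his normalization of the representation contributes one further factor of $\delta$, so that the resulting eigenfunction is a Gaussian of the form $c\,e^{-\pi|\sigma_w(\alpha\delta^3)|\sigma_v(x)^2}$. Writing $a = |\sigma_w(\alpha\delta^3)|$ and requiring $\phi_v$ to be a unit vector, the identity $|e^{-\pi a\sigma_v(x)^2}|^2 = e^{-2\pi a\sigma_v(x)^2}$ gives
\begin{align*}
	\|\phi_v\|^2 = |c|^2 \int_{\bR} e^{-2\pi a\,\sigma_v(x)^2}\,dx = |c|^2\,(2a)^{-\frac12},
\end{align*}
whence $c = (2a)^{\frac14} = \left|2\sigma_w(\alpha\delta^3)\right|^{\frac14}$, the factor $2$ inside the fourth root coming precisely from squaring the Gaussian.

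Finally, the eigencharacter. The circle $G_v$ acts on $L^2(\bR)$ through the metaplectic action, whose weight spaces are spanned by the Hermite functions; the pure Gaussian is the lowest-weight vector, on which $G_v$ acts by a fixed character. The infinite place condition $\chi_w(z) = |z|_w/z$ restricts on $G_v$ (where $|z|_w = 1$) to $z \mapsto \overline z = z^{-1}$, and this twist is built into $\omega_{\alpha, \chi, v}$ so as to cancel the intrinsic weight of the Gaussian, giving $\omega_{\alpha, \chi, v}(g)\phi_v = \phi_v$ for all $g \in G_v$. I expect the one genuinely delicate point to be this final matching of conventions: one must verify that Yang's normalization of $\tfrac12\psi_v$ produces exactly the power $\delta^3$ and no other, and that the prescribed infinity type cancels the Gaussian's weight precisely, since any discrepancy would leave a nontrivial power of $z/\overline z$ as the eigencharacter instead of the trivial one.
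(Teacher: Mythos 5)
Your proposal is correct and takes essentially the same approach as the paper, whose entire proof is the reduction to Yang's archimedean lemma (cited there as \cite[Lemma 1.1]{Yan99}, and as \cite[Lemma 1.1]{Yan97} in the paper's sketch) together with the normalization of \cite[Lemma 3.2]{SY03}. The bookkeeping you carry out explicitly --- $\sigma_w(\delta)$ purely imaginary so that $\sigma_v(\alpha)>0$ gives a decaying Gaussian, the Gaussian integral yielding the constant $\left|2\sigma_w(\alpha\delta^3)\right|^{\frac{1}{4}}$, and the infinity type $\chi_w(z)=|z|_w/z$ cancelling the lowest weight of the Gaussian --- is exactly what those two citations encapsulate.
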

\begin{proof}
It follows from \cite[Lemma 1.1]{Yan99}. We use the normalization of \cite[Lemma 3.2]{SY03}.
\end{proof}

\begin{proposition} \label{prop:Ivabovep}
	Suppose the conductor exponent of $\chi$ at $\fp$ is $1$. Let
	\begin{align*}
		\xi = \left\{
		\begin{array}{cc}
		1 &  \textrm{if } \epsilon_{\fP}\lbrb{\frac{1}{2}, \chi_{\fP}, \frac{1}{2} \psi_{K, \fP}}\chi_{\fP}(\delta) = \epsilon_{\fP}(\alpha), \\
		\mathrm{sign} & \textrm{otherwise}.
		\end{array}
		\right.
	\end{align*}
	where $\mathrm{sign}$ means the non-trivial character on $G_{\fp}/\Gamma_1 \cong \lcrc{\pm1}$.
	Let $n = n(\psi_{\fp}) - 1 -\ord_{\fp}(\alpha)$.
	Then, $\phi_v(x) = \ch(\pi_{\fp}^{[\frac{n}{2}]}\cO_{F_{\fp}})$ is an eigenfunction with an eigencharacter $\xi$. 
\end{proposition}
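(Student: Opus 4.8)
The plan is to reduce the statement to an explicit local computation of the Weil representation action at the odd ramified prime $\fp$, and then to identify the eigenvalue on the nontrivial class of $G_\fp/\Gamma_1$ with the local factor that defines $\alpha$ in (\ref{eqn:alpha}). First I would record the local structure. Since $p$ is odd we have $\fp \nmid 2$, and $K/F$ is totally ramified at $\fp$ with $\fP^2 = \fp\cO_K$ locally; as computed from $\delta = \zeta_p^{-(p-1)/2}-\zeta_p^{(p-1)/2}$, the element $\delta$ is a uniformizer of $\fP$ and $\Delta=\delta^2$ a uniformizer of $\fp$, so I take $\pi_\fp = \Delta$ and $\pi_\fP = \delta$. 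Because $\fp\nmid 2$ is nonsplit, the action of $g = x_0 + y_0\delta \in G_\fp$ on $\mathcal{S}(F_\fp)$ through $\omega_{\alpha,\chi,\fp}$ is given by the explicit oscillatory kernel of \cite[Proposition 1.2]{Yan99}, whose parameters are governed by $\alpha$, by the level $n(\psi_\fp) = \ord_\fp(\mathfrak{d}_{F_\fp/\bQ_p})$ of $\psi_\fp$, and by $\chi_\fP$. Existence of an eigenfunction is guaranteed abstractly by \cite[Corollary 2.4]{Yan97}; the content here is to verify that the specific characteristic function is one and to pin down its eigencharacter.

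Next I would substitute $\phi_\fp = \ch(\pi_\fp^{[n/2]}\cO_{F_\fp})$ into that formula and verify $\omega_{\alpha,\chi,\fp}(g)\phi_\fp = \xi(g)\phi_\fp$ for every $g \in G_\fp$. The radius $[n/2]$ with $n = n(\psi_\fp)-1-\ord_\fp(\alpha)$ is forced by the requirement that the quadratic exponential in the kernel be constant on the support of $\phi_\fp$: for $g \in \Gamma_1$ the oscillatory factor degenerates and the integral reproduces $\phi_\fp$ with eigenvalue $1$. Here the hypothesis that $\chi_\fP$ has conductor exponent $1$ is exactly what makes the $\chi_\fP$-twist trivial on $\Gamma_1$, so that the eigencharacter factors through $G_\fp/\Gamma_1 \cong \{\pm 1\}$. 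For $g$ representing the nontrivial class the kernel collapses to a single normalized Gauss sum, producing a sign $\xi(g) = \pm 1$.

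Finally I would evaluate that sign and match it to the stated dichotomy. Up to the Weil index coming from the quadratic form scaled by $\alpha$ (which contributes $\epsilon_{K/F,\fp}(\alpha)=\epsilon_\fP(\alpha)$, sensitive to the parity of $\ord_\fp(\alpha)$), the Gauss sum is the local root number $\epsilon_\fP(\tfrac12,\chi_\fP,\tfrac12\psi_{K,\fP})$, and the uniformizer shift by $\delta$ contributes the twist $\chi_\fP(\delta)$. Using $\psi_{K,\fP}=\psi_\fp\circ\Tr_{K_\fP/F_\fp}$ (so that the condition written upstairs over $K_\fP$ is literally (\ref{eqn:alpha}) at $v=\fp$ with $\eta$ trivial), this identifies the eigenvalue on the nontrivial class as $+1$ precisely when $\epsilon_\fP(\tfrac12,\chi_\fP,\tfrac12\psi_{K,\fP})\chi_\fP(\delta)=\epsilon_\fP(\alpha)$, and as $-1$ otherwise, which is the definition of $\xi$.

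The main obstacle is this last step: correctly evaluating the ramified Gauss sum and carrying the bookkeeping among $n(\psi_\fp)$, $\ord_\fp(\alpha)$, and the conductor of $\chi_\fP$, so that both the support radius $[n/2]$ and the sign come out as asserted. I expect the parity of $n$ to require separate treatment, since the exact collapse of the Gaussian and the value of $[n/2]$ differ in the even and odd cases, and it is there that a sign error would most easily creep in.
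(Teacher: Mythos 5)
Your plan is correct in substance, but it amounts to re-deriving results that the paper simply cites, so the two arguments differ in route rather than in conclusion. The paper's proof is two lines: by \cite[Lemma 2.3]{Yan98}, the twisted additive character $\psi_{F,\fp}' := \frac{\delta\alpha}{4}\psi_{K,\fP}$ has \emph{even} conductor, equal to $2n = 2n(\psi_\fp) - 2 - 2\ord_\fp(\alpha)$, and then \cite[(1.9), Corollary 1.4 (1)]{Yan99} states directly that $\ch(\pi_\fp^{[\frac{n}{2}]}\cO_{F_\fp})$ is an eigenfunction with eigencharacter $\xi$, sign dichotomy included. You instead propose to substitute the characteristic function into the explicit kernel of \cite[Proposition 1.2]{Yan99} and evaluate the resulting ramified Gauss sum by hand; that is essentially the computation carried out \emph{inside} Yang's proofs of Corollary 1.4 and Proposition 1.5 of \cite{Yan99}, so executed carefully it would succeed, at the cost of redoing the hardest bookkeeping that the citation makes unnecessary. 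In particular, the ``main obstacle'' you flag at the end --- the parity of $n$ and the possibility of a sign error in the Gauss sum --- is exactly what the paper's route dispatches: the relevant parity is that of the conductor $2n$ of the twisted character $\psi_{F,\fp}'$, which \cite[Lemma 2.3]{Yan98} shows is automatically even in this ramified, odd-residue-characteristic situation, and Yang's even-conductor case already covers both parities of $n$ itself through the floor $[\frac{n}{2}]$, so no case split arises. Your intermediate reductions --- that conductor exponent $1$ forces $\chi_\fP$ to be trivial on $\Gamma_1$ so the eigencharacter factors through $G_\fp/\Gamma_1 \cong \lcrc{\pm 1}$, and that the eigenvalue on the nontrivial coset is $\epsilon_{\fP}\lbrb{\frac{1}{2}, \chi_{\fP}, \frac{1}{2}\psi_{K,\fP}}\chi_{\fP}(\delta)\epsilon_{\fP}(\alpha)$ --- agree with Yang's statements, so your endgame matches the proposition; what your write-up lacks is any counterpart of the conductor computation for $\psi_{F,\fp}'$, which is the one ingredient that pins down the support radius $[\frac{n}{2}]$ without case analysis.
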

\begin{proof}
	By \cite[Lemma 2.3]{Yan98}, the conductor of $\psi_{F, \fp}' := \frac{\delta \alpha}{4} \psi_{K, \fP}$ is even, and actually equals to
	\begin{align*}
		2n:=2n(\psi_{\fp}) - 2 - 2\ord_{F_{\fp}}(\alpha).
	\end{align*}
	Then, $\ch(\pi_{\fp}^{[\frac{n}{2}]}\cO_{F_{\fp}})$ is an eigenfunction with an eigencharacter $\xi$ by \cite[(1.9), Corollary 1.4 (1)]{Yan99}.
\end{proof}

\begin{proposition} \label{prop:Ivsplit}
	Let $v \mid \ff(\chi)$ be a prime of $F$ that splits in $K/F$. Then, $I_{v}(x)$ is a product of a nonzero constant depending on $\alpha$ defined in (\ref{eqn:alpha}) and a characteristic function of a compact set.
\end{proposition}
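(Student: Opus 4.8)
The plan is to reduce everything to the explicit description of the Weil representation at a split prime and then adapt the computation carried out in \cite[\S 4]{JPY} for the quintic case $y^2 = x^5 + A$. First I would record the local picture. Since $v$ splits in $K/F$, there is an isomorphism $K \otimes_F F_v \cong F_v \times F_v$ under which conjugation swaps the two factors, so the norm-one group $G_v = \ker(\Nm_{K_w/F_v})$ is identified with $F_v^\times$ via $a \mapsto (a, a^{-1})$. Under this identification the relevant character $\chi_w$ becomes a (ramified) character of $F_v^\times$, while $\epsilon_{K/F,v}$ is trivial because $v$ is split, and the norm quotient $F_v^\times/\Nm(K_w^\times) = \{1\}$ is trivial; nonetheless the chosen representative of $\alpha$ still enters through the additive twist, so the eventual constant will genuinely depend on $\alpha$ as asserted.

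Next I would fix the eigenfunction. By \cite[Corollary 2.14]{Yan97} the Schwartz model at a split prime is functions on $F_v$, and by \cite[Corollary 2.10 (i)]{Yan97} the action of $g \in G_v \cong F_v^\times$ is a twisted multiplicative translation, of the shape $\omega_{\alpha,\chi,v}(g)\phi(x) = \chi_w(g)\,\phi(g x)$ up to the normalizing constant in \emph{loc.\ cit.} Because $\chi$ is ramified at $v$ with conductor exponent $m = \ord_v(\ff(\chi))$, the standard unramified choice $\ch(\cO_{F_v})$ is no longer an eigenfunction; instead I would take $\phi_v$ supported on a single multiplicative coset and twisted by $\chi_w^{-1}$, concretely a scalar multiple of $\chi_w^{-1}\cdot\ch(\pi_v^{j}\cO_{F_v}^\times)$ for the exponent $j$ determined by the conductor of the twisted additive character $\psi_v' = \tfrac{\delta\alpha}{4}\psi_{K,w}$ (as in Proposition \ref{prop:Ivabovep}). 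One then checks directly that this $\phi_v$ is a $G_v$-eigenfunction, the eigencharacter being read off from how $\chi_w$ interacts with the multiplicative scaling.

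With $\phi_v$ in hand I would compute $I_v(x) = \int_{U_v}\omega_{\alpha,\chi,v}(g)\phi_v(x)\,dg$. The local piece $U_v$ of the splitting subgroup $U$ is, up to finite index, the unit group inside $G_v \cong F_v^\times$, so after substituting the explicit action the integral becomes an integral of $\chi_w$ against a multiplicative translate of $\phi_v$ over $\cO_{F_v}^\times$. For $x$ outside a fixed compact set the support condition on $\phi_v$ forces the integrand to vanish identically, while for $x$ in that compact set the $\chi_w$-twist combines with the translation so that the integral collapses to a local abelian Gauss sum of $\chi_w$ against $\psi_v'$. This is exactly the mechanism of \cite[\S 4]{JPY}, and it produces $I_v(x) = C\cdot\ch(S)(x)$ for a compact set $S \subseteq F_v$ and a constant $C$ depending on $\alpha$.

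The main obstacle is establishing that $C$ is nonzero. This reduces to the nonvanishing of the local Gauss sum attached to the ramified character $\chi_w$ and the additive character $\psi_v'$, together with the compatibility of the support conditions coming from the eigenfunction and from $U_v$ (i.e.\ that the set $S$ is nonempty and that the Gauss sum is evaluated on the correct coset). The Gauss sum is nonzero by the standard computation of its absolute value, with $|C|$ a positive power of the residue cardinality times a measure factor. The genuinely fiddly part is the bookkeeping of the exponent $j$ and of the set $S$ in terms of $m = \ord_v(\ff(\chi))$, $\ord_v(\alpha)$, and $n(\psi_v)$; this is a finite computation with no conceptual difficulty once the action of \cite[Corollary 2.10 (i)]{Yan97} is pinned down in the present normalization.
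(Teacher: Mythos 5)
There is a genuine gap, and it is precisely at the point you flag as the ``main obstacle.'' Your plan reduces the nonvanishing of $I_v$ to the nonvanishing of a local Gauss sum of the ramified character $\chi_w$ against a twisted additive character, to be settled ``by the standard computation of its absolute value.'' But an integral of the shape $\int_{\cO_{F_v}^\times}\chi_w(g)\psi_v(cg)\,dg$ has the predicted absolute value $q_v^{-m/2}$ only when $\ord_v(c)$ matches the conductor exponent of $\chi_w$ exactly; for any mismatch it vanishes identically, and with the actual split-prime action the phase is moreover quadratic in $g$ and depends on $x$ (the paper's computation yields $\int_{\cO_{F_v}^\times}\psi_v\bigl(\frac{\alpha}{2}(xg)^2+\alpha xg\bigr)dg$ on the support $\pi_{F_v}^{-2}\cO_{F_v}$). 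For a fixed representative $\alpha$ this integral is a genuine function of $x$ that can vanish at some points of the support and not others, so the conclusion ``$I_v = C\cdot\ch(S)$ with $C\neq 0$'' does not follow from conductor bookkeeping alone. The idea your proposal is missing is the paper's decisive step: $\alpha$ is only well-defined in $F^\times/\Nm_{K/F}K^\times$, and at a \emph{split} $v$ every element of $F_v^\times$ is a local norm, so one may replace $\alpha$ by a representative with arbitrarily large valuation at all split $v\mid\ff(\chi)$. This makes the phase identically $1$ on the compact support, uniformly in $x$, and collapses the integral to $\meas(\cO_{F_v}^\times)>0$ --- no Gauss sum nonvanishing is needed or used.

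Two secondary points also go wrong. First, your proposed eigenfunction $\chi_w^{-1}\cdot\ch(\pi_v^{j}\cO_{F_v}^\times)$ is self-defeating: under the action you yourself posit, $\omega_{\alpha,\chi,v}(g)\phi(x)=\chi_w(g)\phi(gx)$, one gets $I_v(x)=\meas(U_v)\,\chi_w^{-1}(x)\ch(\pi_v^{j}\cO_{F_v}^\times)(x)$, which carries a nonconstant multiplicative oscillation (since $\chi_w$ is ramified) and hence is \emph{not} a constant times a characteristic function; this would destroy the positivity of $\sum_{x\in F}\prod_v I_v(x)$ on which the proof of Theorem \ref{main:main} rests. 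Second, your premise that ramification of $\chi$ at $v$ rules out the unramified choice is misplaced: the character that survives on $U_v=\cO_{F_v}^\times\subset G_v$ is the restriction of $\chi$ along $F_v^\times$, which is $\epsilon_{K/F,v}$, and this is trivial at a split prime (the paper records this as $\chi_v|_{\cO_{F_v}^\times}=1$). Accordingly, the paper works with $\phi_v=\rho\bigl(\ch(1+\pi_{F_v}\cO_{F_v})\bigr)$, an additive-phase twist $\psi_v\bigl(\frac{\alpha x^2}{2}+\alpha x\bigr)$ times $\ch(\pi_{F_v}^{-2}\cO_{F_v})$, transported via Yang's intertwiner from \cite[Theorem 2.15]{Yan97}, and then computes the average over $U_v$ directly; with the large-valuation representative of $\alpha$ the result is $c_v(\alpha)\,\ch(\pi_{F_v}^{-2}\cO_{F_v})(x)$ with $c_v(\alpha)\neq 0$ manifest.
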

\begin{proof}
	This is proved in \cite[\S 4]{JPY}. For the reader's convenience, we reproduce some of the arguments there. Under the identification
\begin{align*}
K_v\cong\frac{F[t]}{(t^2-\Delta)}\otimes_FF_v\cong F_v\cdot\delta\oplus F_v\cdot(-\delta)
\end{align*}
we have $\delta=(1,-1)\in F_v\oplus F_v$. Let $\pi_{F_v}\in\mathcal{O}_{F_{v}}$ be a uniformizer.
%To get $\phi_v=\phi_{v,1}$ using \cite[Theorem 2.15]{Yan97}, we first compute
Recall the definition of $\rho$ given in \cite[Theorem 2.15]{Yan97}.
Then, a computation of \cite[\S 4]{JPY} gives
\begin{align*}
\rho\left(\mathrm{char}\left(1+\pi_{F_v} \mathcal{O}_{F_{v}}\right)\right)(x)&=|\alpha|_v^{\frac{1}{2}}\psi_v\left(\frac{\alpha x^2}{2}+\alpha x\right)\mathrm{meas}(\pi_{F_v}\cO_{F_{v}}) \mathrm{char}\left(\pi_{F_v}^{-2}\mathcal{O}_{F_{v}}\right)(x).
\end{align*}
Hence we get
\begin{align*}
\phi_v=\mathrm{meas}(\mathcal{O}_{F_{v}})^{-\frac{1}{2}}\mathrm{meas}(\pi_{F_v}\cO_{F_{v}}) q_v^{\frac{1}{2}}|\alpha|_v^{\frac{1}{2}}\psi_v\left(\frac{\alpha x^2}{2}+\alpha x\right)\mathrm{char}\left(\pi_{F_v}^{-2}\mathcal{O}_{F_{v}}\right)(x)
\end{align*}
also. Since $\chi_v|_{\cO_{F_{v}}^\times}$ factor through $(\cO_{F}/\fp_v)^\times$ and the restriction of $\chi$ on $F$ is quadratic, $\chi_v|_{\cO_{F_{v}}^\times} = 1$.
Therefore,
\begin{align*}
I_v(x)&=\int_{\mathcal{O}_{F_{v}}^\times}\omega_{\alpha,\chi_A,v}(g)\phi_v(x)dg\\
&=\mathrm{meas}(\mathcal{O}_{F_{v}})^{-\frac{1}{2}}\mathrm{meas}(\pi_{F_v}\cO_{F_{v}}) q_v^{\frac{1}{2}}|\alpha|_v^{\frac{1}{2}}\mathrm{char}\left(\pi_{F_v}^{-2}\mathcal{O}_{F_{v}}\right)(x)\int_{\mathcal{O}_{F_{v}}^\times}\psi_v\left(\frac{\alpha}{2}(xg)^2+\alpha(xg)\right)dg.
\end{align*}
We note that the action of Weil representation $\omega$ is described in \cite[Corollary 2.10]{Yan97}, and an explicit computation can be found in \cite[\S 4]{JPY}.
Since we can choose $\alpha$ up to $\Nm_{K/F}K^\times$, we may take the valuation of $\alpha$ as large as possible. 
Therefore, for a suitable $\alpha$, $\psi_v\left(\frac{\alpha}{2}(xg)^2+\alpha(xg)\right) = 1$ for $g \in \cO_{F_{v}}^\times$ and $x \in \pi_{F_v}^{-2} \cO_{F_{v}}$ for all $v \mid \ff(\chi)$ splits in $K/F$.
Hence
\begin{align*}
I_v|_{\pi_{F_v}^{-2}\mathcal{O}_{F_{v}}}=\mathrm{meas}(\mathcal{O}_{F_{v}})^{-\frac{1}{2}} \mathrm{meas}(\pi_{F_v}\cO_{F_{v}}) q_v^{\frac{1}{2}}|\alpha|_v^{\frac{1}{2}}\int_{\mathcal{O}_{F_{v}}^\times}dg
=\frac{\mathrm{meas}(\mathcal{O}_{F_{v}}^\times)}{\mathrm{meas}(\mathcal{O}_{F_{v}})^{\frac{1}{2}}}
\mathrm{meas}(\pi_{F_v}\cO_{F_{v}}) q_v^{\frac{1}{2}}|\alpha|_v^{\frac{1}{2}}
\end{align*}
is a non-zero constant. Therefore, there is a non-zero constant $c_v(\alpha)$ such that 
\begin{align*} \label{eqn:Ivsplit}
	I_v(x) = c_v(\alpha) \mathrm{char} (\pi_{F_v}^{-2}\cO_{F_{v}})(x),
\end{align*}
when $v \mid \ff(\chi)$ splits in $K/F$.
\end{proof}

\section{Local computations at primes above $2$} \label{sec:above 2}

\subsection{Weil index condition and the proof of Theorem \ref{main:main}}
Let $v$ be a prime of $F$ which is inert in $K/F$, and let $\iota_{\alpha, v} : G_v \to \Sp_1(F_v)$ be a map defined by
\begin{align*}
	g = x +y\delta \to \left(
	\begin{array}{cc}
	x & \Delta^2 \alpha y \\
	\frac{y}{\Delta \alpha} & x
	\end{array}
	\right).
\end{align*}
For a nontrivial additive character $\psi$ of $F_{v}$, denote by $\gamma_{F,v}(\psi)$ the Weil index of the map $x\mapsto\psi(x^{2})$ (cf. \cite[p. 161]{Wei64}). When $v\mid 2$, $\gamma_{F,v}(\psi)$ can be computed by using the following formula:
\begin{equation}\label{eqn:Weildyadic}
\gamma_{F,v}(\psi)=|\mathcal{O}_{F_{v}}/\pi_{v}^{\widetilde{m}}\mathcal{O}_{F_{v}}|^{-\frac{1}{2}}\sum_{\bar{x}\in\mathcal{O}_{F_{v}}/\pi_{v}^{\widetilde{m}}\mathcal{O}_{F_{v}}}\psi(\pi_{v}^{-\widetilde{m}-m - \ord_v(2)}x^{2}).
\end{equation}
Here, $m=m(\psi)$ is the largest integer such that $\psi|_{\pi_{v}^{-m}\mathcal{O}_{F_{v}}}=1$, 
\begin{align*}
\overline{m}:=\left\{\begin{array}{ll}
0 & \textrm{if $m$ is even,}\\
1 & \textrm{if $m$ is odd},
\end{array}\right.
\end{align*}
and $\widetilde{m}=\overline{m}+ \ord_{\pi_{v}}(2)$ (cf. \cite[p. 370]{Rao}). Given $a\in F_{v}^{\times}$, we define $a\psi:F_{v}\rightarrow\mathbb{C}^{\times}$ by $(a\psi)(x)=\psi(ax)$, and set
\begin{equation}\label{eqn:Windratio}
\gamma_{F,v}(a,\psi)=\frac{\gamma_{F,v}(a\psi)}{\gamma_{F,v}(\psi)}.
\end{equation}
It is known that 
\begin{equation}\label{eqn:square}
\gamma_{F,v}(ac^{2},\psi)=\gamma_{F,v}(a,\psi)
\end{equation}
for all $a,c\in F_{v}^{\times}$ (cf. \cite[Theorem A.4]{Rao}).

Now, for $g=x+y\delta\in G_{v}$ where $x, y \in F_v$
\begin{align*}
	\mu_{\alpha, \chi, v}(g) := \chi_v(-\delta y (g-1))\gamma_{F, v}(\alpha y (1-x) \psi_{v}).
\end{align*}
Denote by $r_v$ the Rao's standard section \cite[\S 3]{Rao}. Then
\begin{equation*} \label{eqn:Weilrepn}
	\omega_{\alpha, \chi, v}(g)\phi_v = \mu_{\alpha, \chi, v}(g)r_v(\iota_{\alpha, v}(g))\phi_v.
\end{equation*}
Denote
\begin{align*}
	\xi_{\alpha, \chi, v}(g) = \left\{
	\begin{array}{ll}
	\mu_{\alpha, \chi, v}(g) \gamma_{F, v}(2\Delta \alpha xy \psi_v) & \textrm{if } g \in \Gamma_1 \\
		\mu_{\alpha, \chi, v}(g)  & \textrm{if } g \in G_{v} - \Gamma_1 
	\end{array}
	\right.
\end{align*}
so that, by \cite[(5.3)]{SY03}, we have
\begin{align*}
	\omega_{\alpha, \chi, v}(g) \phi_v(u) = \xi_{\alpha, \chi, v}(g) r_{v}(\iota_{\alpha, v}w) \widehat{\phi}_v(-u)
\end{align*}
where $\widehat{\phi}_v$ is the Fourier transform of $\phi_v$.

In this section, we will consider primes $v \mid 2$ which are inert in $K/F$.
As in Section \ref{sec:sketch}, we define $G_v$ as the kernel of norm from $K_w$ to $F_v$ and
\begin{align*}
	\Gamma_k = \lcrc{g \in G_v : g \equiv 1 \pmod{\pi_v^k}}, \qquad
	\Gamma_k' = \lcrc{g = x + y\delta \in G_v : x \in F_v, y \in \pi_v^k \cO_{F_{v}}}.
\end{align*}
Note that for each $g \in G_v$, there is unique $x, y \in \cO_{F_{v}}$ such that $g = x + y\delta$.

\begin{lemma} \label{lem:SY5.35.4}
	Assume that $\alpha  \in \cO_{F_{v}}^\times$. Let $f_{\beta} = \ch(\beta + \cO_{F_{v}})$ for $\beta \in \frac{1}{2}\cO_{F_{v}}$. Then,
	\begin{align*}
		\omega_{\alpha, \chi, v}(g)f_{\beta} = \xi_{\alpha, \chi, v}(g) \psi_{v}\lbrb{\frac{\Delta^2\alpha y}{2x}\beta^2}f_{\beta}
	\end{align*}
	for all $g = x + y\delta \in \Gamma_1$. Furthermore, $\xi_{\alpha, \chi, v}(g)$ is a character on $\Gamma_1$ which does not depend on $\alpha$.
\end{lemma}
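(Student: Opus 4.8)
The plan is to reduce the statement to a single metaplectic computation and then extract the scalar by a big‑cell (LDU) decomposition of $\iota_{\alpha,v}(g)$. Since $\omega_{\alpha,\chi,v}(g) = \mu_{\alpha,\chi,v}(g)\,r_v(\iota_{\alpha,v}(g))$ and, on $\Gamma_1$, $\xi_{\alpha,\chi,v}(g) = \mu_{\alpha,\chi,v}(g)\,\gamma_{F,v}(2\Delta\alpha xy\psi_v)$, the claim is equivalent to
\[
  r_v(\iota_{\alpha,v}(g))\,f_\beta \;=\; \gamma_{F,v}(2\Delta\alpha xy\psi_v)\,\psi_v\!\lbrb{\tfrac{\Delta^2\alpha y}{2x}\beta^2}\,f_\beta .
\]
Before starting I would record the dyadic normalizations: only $p$ ramifies in $K/\bQ$, so $2$ is unramified in $F/\bQ$, whence $\ord_v(2)=1$, both $\Delta$ and (by hypothesis) $\alpha$ lie in $\cO_{F_v}^\times$, and $\psi_v$ has conductor $\cO_{F_v}$, i.e. $m(\psi_v)=0$ and $\cO_{F_v}$ is self-dual for $\psi_v$. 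From $g=x+y\delta\in\Gamma_1\subseteq G_v$ I also record $x\in 1+\pi_v\cO_{F_v}$, $y\in\pi_v\cO_{F_v}$ and the norm relation $x^2-\Delta y^2=1$.

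Because $x$ is a unit, $\iota_{\alpha,v}(g)$ lies in the big cell and factors as $\iota_{\alpha,v}(g)=n_-(c)\,\mathrm{diag}(x,x^{-1})\,n_+(b)$, where $n_\pm(\cdot)$ denote the lower/upper unipotents, with $c=\tfrac{y}{\Delta\alpha x}$ and $b=\tfrac{\Delta^2\alpha y}{x}$; here the bottom–right entry collapses to $x^{-1}$ exactly because $x^2-\Delta y^2=1$. I would then push $f_\beta$ through Rao's section \cite{Rao} factor by factor. The upper unipotent multiplies $f_\beta$ by the quadratic phase $\psi_v(\tfrac12 bu^2)$; writing $u=\beta+t$ with $t\in\cO_{F_v}$ and using $\ord_v(b)\ge1$, $\ord_v(\beta)\ge -1$ and $m(\psi_v)=0$, the cross term $b\beta t$ and the term $\tfrac12 bt^2$ both land in $\cO_{F_v}$, so $\psi_v$ trivializes them and only the constant $\psi_v(\tfrac{\Delta^2\alpha y}{2x}\beta^2)$ survives, leaving $f_\beta$ unchanged. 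The torus element rescales by the unit $x^{-1}\equiv1\pmod{\pi_v}$; since $(x^{-1}-1)\beta\in\cO_{F_v}$ it fixes the coset $\beta+\cO_{F_v}$ and contributes only $|x|_v^{1/2}=1$ and a Weil index of $x$.

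The lower unipotent is the delicate factor, as its Weil action is not a multiplier but the Fourier conjugate of one: from $n_-(c)=w\,n_+(-c)\,w^{-1}$ (with $w$ the Weyl element) its action is conjugate by $r_v(w)$ to multiplication by $\psi_v(-\tfrac12 c\,\cdot^{\,2})$, up to Rao's cocycle. Since $\widehat{f_\beta}$ is a constant multiple of a phase times $\ch(\cO_{F_v})$, and $\ord_v(c)\ge1$ forces $\tfrac12 c\xi^2\in\cO_{F_v}$ on the support $\xi\in\cO_{F_v}$, the inner multiplier is trivial there and $n_-(c)$ acts on $f_\beta$ as a pure Weil-index scalar. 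The remaining work is to reassemble the three factors against Rao's cocycle and verify that the accumulated Weil index is exactly $\gamma_{F,v}(2\Delta\alpha xy\psi_v)$; for this I would use the square-invariance \eqref{eqn:square}, the cocycle identity $\gamma_{F,v}(a,\psi_v)\gamma_{F,v}(b,\psi_v)=(a,b)_v\,\gamma_{F,v}(ab,\psi_v)$ and the dyadic formula \eqref{eqn:Weildyadic}, trading the entries $c$, $x$, $b$ against one another through $\Delta y^2=(x-1)(x+1)$ so that the surviving representative of the square class is $2\Delta\alpha xy$. This parallels the computation of \cite[(5.3)]{SY03}, whose structure I would follow.

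For the final assertion I would specialize to $\beta=0$, where the phase is $1$, so that $\omega_{\alpha,\chi,v}(g)f_0=\xi_{\alpha,\chi,v}(g)f_0$ exhibits $f_0=\ch(\cO_{F_v})$ as a common eigenvector on all of $\Gamma_1$; since $\omega_{\alpha,\chi,v}$ is a genuine representation of $G_v$, the eigenvalue $g\mapsto\xi_{\alpha,\chi,v}(g)$ is automatically multiplicative, and it is unitary as a product of factors of absolute value $1$, hence a character of $\Gamma_1$. The independence of $\alpha$ is the main obstacle. Expanding $\xi_{\alpha,\chi,v}(g)=\chi_v(-\delta y(g-1))\,\gamma_{F,v}(\alpha y(1-x)\psi_v)\,\gamma_{F,v}(2\Delta\alpha xy\psi_v)$, the character value carries no $\alpha$, and the cocycle identity together with \eqref{eqn:square} collapses the two Weil indices into a manifestly $\alpha$-free Weil index times the single Hilbert symbol $(\alpha,\,2\Delta x(x-1))_v$. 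The crux is to show this symbol is trivial for every unit $\alpha$: here I would use that $x^2-\Delta y^2=1$ forces $\ord_v(x-1)$ to be \emph{odd} on $\Gamma_1$ (if $\ord_v(x-1)=1$ this is immediate, while if $\ord_v(x-1)\ge2$ then $\ord_v(x+1)=\ord_v(2)=1$, so $\ord_v(x-1)=2\ord_v(y)-1$), making $\ord_v(2\Delta x(x-1))$ even; then, since $\epsilon_{K/F,v}=(\cdot,\Delta)_v$ is trivial on units, the symbol reduces to a unit pairing that the finer $\Gamma_1$-congruences render trivial. Establishing this dyadic cancellation cleanly is the subtlest point of the proof.
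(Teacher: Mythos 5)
The paper does not actually reprove this lemma: its proof is a two-line citation to \cite[Lemmas 5.3 and 5.4]{SY03}, so your reconstruction is by necessity a different (and more informative) route, and the skeleton you chose --- factor $\iota_{\alpha,v}(g)$ through the big cell, show the unipotent phases are trivial on $\beta+\cO_{F_v}$, and collect a residual Weil-index scalar --- is indeed the mechanism behind \cite{SY03} and behind the paper's own Lemma \ref{lem:Weilunity}. Your $\beta=0$ trick for multiplicativity of $\xi_{\alpha,\chi,v}$ (common eigenvector plus genuineness of $\omega_{\alpha,\chi,v}$) is correct and cleaner than a direct cocycle check. But two steps in the first part are unjustified. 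First, the opening ``record'' that $g=x+y\delta\in\Gamma_1$ forces $x\in1+\pi_v\cO_{F_v}$ and $y\in\pi_v\cO_{F_v}$ is not immediate: $\{1,\delta\}$ is \emph{not} an $\cO_{F_v}$-basis of $\cO_{K_w}$ at dyadic $v$ (the index is divisible by $2$; witness the roots of unity, whose coordinates satisfy $y_j\in\frac12\cO_{F_v}^\times$ in Lemma \ref{lem:Weilunity}), so $g\equiv1\pmod{\pi_v\cO_{K_w}}$ a priori gives only $x-1,y\in\cO_{F_v}$. The claims are true, but need an argument: since Frobenius is bijective on $\kappa(F_v)$ one can pick $t_0\in\cO_{F_v}$ with $\delta\equiv t_0\pmod{2\cO_{K_w}}$, whence $\Delta\equiv t_0^2\pmod{4\cO_{F_v}}$, and then Lemma \ref{lem:SYlem5.1} together with $x-1=\frac{2\Delta}{a^2-\Delta}$, $y=\frac{2a}{a^2-\Delta}$ yields $\ord_v(y)\geq1$ and $\ord_v(x-1)\geq1$ on $\Gamma_1$; every one of your phase-triviality estimates for $n_+(b)$, $n_-(c)$ and the torus rests on this. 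Second, the quoted formulas for Rao's section are multiplicative only along factorizations of the shape $m(a)n(b)wm(a')n(b')$, and your product $n_-(c)\,m(x)\,n_+(b)$ is not of that shape; ``pushing $f_\beta$ through factor by factor'' therefore requires either computing Rao's cocycle on your three factors or rewriting $\iota_{\alpha,v}(g)$ in the canonical form, as the paper does in Lemma \ref{lem:Weilunity} via $\iota_{\alpha,v}(g)=m\lbrb{\frac{\Delta\alpha}{y}}n\lbrb{\frac{xy}{\Delta\alpha}}w\,n\lbrb{\frac{\Delta\alpha x}{y}}$; and the identification of the accumulated scalar with $\gamma_{F,v}(2\Delta\alpha xy\psi_v)$ --- the actual content of \cite[Lemma 5.3]{SY03} --- is deferred rather than carried out.

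The genuine error is in your endgame for $\alpha$-independence. Your reduction of the $\alpha$-dependence to the single symbol $(\alpha,2\Delta x(x-1))_v$ is correct, as is the parity argument making $\ord_v(2\Delta x(x-1))$ even; but in residue characteristic $2$ evenness does not trivialize a unit--unit Hilbert symbol, and the identity you aim for --- $(\alpha,2\Delta x(x-1))_v=1$ for \emph{every} unit $\alpha$ --- fails on part of $\Gamma_1$. Writing $g=g_a$ one has $2\Delta x(x-1)=\frac{4\Delta^2(a^2+\Delta)}{(a^2-\Delta)^2}\sim a^2+\Delta$ modulo squares. Take $a$ a unit whose residue avoids that of $t_0$ (possible since $k_v>1$); then $a^2-\Delta$ is a unit and $g_a\in\Gamma_1$ by Lemma \ref{lem:SYlem5.1}, while $a^2+\Delta=(a+t_0)^2\bigl(1+2w\bigr)$ with $w$ a unit, and a unit of the form $1+2\cdot(\mathrm{unit})$ is neither a square nor in the unramified class $\Delta\cdot F_v^{\times2}$ (both of those classes reduce to $1$ modulo $4$ after removing a Teichm\"uller square). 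Hence $(\,\cdot\,,a^2+\Delta)_v$ is a \emph{nontrivial} character of $\cO_{F_v}^\times$; concretely, for $p=5$ and $a=1$ the square class is that of $-1$, and $(\alpha,-1)_{F_v}=-1$ for half of the units, since $F_v(i)/F_v$ is ramified. So no ``finer $\Gamma_1$-congruence'' can give you the identity for all units $\alpha$, and the triviality of $\epsilon_{K/F,v}=(\,\cdot\,,\Delta)_v$ on units is irrelevant to these classes. What is true --- and all that the paper ever uses (Lemma \ref{lem:11Gamma1} and the proof of Theorem \ref{main:on11} assume $\alpha\in\bZ_2^\times$ and replace $\alpha$ by $1$) --- is independence within the restricted family of representatives considered: e.g.\ for $\alpha\in\bZ_2^\times$ one has $(\alpha,u)_{F_v}=(\alpha,\Nm_{F_v/\bQ_2}u)_{\bQ_2}$, which kills the example above because $\Nm_{F_v/\bQ_2}(-1)=1$. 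Your plan, as stated, proves something stronger than is true; to close the argument you must either restrict the range of $\alpha$ accordingly or prove the $\alpha$-uniformity of $\xi_{\alpha,\chi,v}$ in the precise form established in \cite[Lemma 5.4]{SY03}.
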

\begin{proof} The first assertion follows from \cite[Lemma 5.3]{SY03}. For the rest, see \cite[Lemma 5.4]{SY03}.
\end{proof}

The next lemma is an analogue of \cite[Lemma 5.6]{SY03}. For the reader's convenience, we present a detailed proof. The point is to show \eqref{lem:Weilunity-reduction} which converts a quadratic term in the integrand to a linear term.

\begin{lemma}\label{lem:Weilunity}
Assume that $\alpha\in\mathcal{O}_{F_{v}}^{\times}$.
Let $\kappa(F_v)$ be the residue field of $F_v$ and let $k_v$ be the residue degree $[\kappa(F_v) : \bF_2]$. 
Let
\begin{align*}
	\beta=\frac{1}{2}\left(\alpha(\zeta_{p}^{\frac{p-1}{2}}+\zeta_{p}^{-\frac{p-1}{2}})^{3}\right)^{2^{k_v-1}-1}\in\frac{1}{2}\mathcal{O}_{F_{v}}.
\end{align*}
Then for $1\leq j<p$,
\begin{align*}
\omega_{\alpha,\chi,v}(\zeta_{p}^{j})f_{\beta}(u)=2^{-\frac{k_v}{2}}\chi_{v}(-&\delta y_{j} (\zeta_{p}^{j}-1))\gamma_{F, v}(\alpha y_{j} (1-x_{j}) \psi_{v})\\
&\times\psi_{v}\left(\frac{\Delta\alpha x_{j}}{2y_{j}}\left(u^{2}-2\beta\frac{u}{x_{j}}+\beta^{2}\right)\right)\mathrm{char}\left(\frac{1}{2}\mathcal{O}_{F_{v}}\right)(u),
\end{align*}
where $\zeta_{p}^{j}=x_{j}+y_{j}\delta$.
\end{lemma}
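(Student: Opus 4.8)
The plan is to read off the action directly from the metaplectic identity $\omega_{\alpha,\chi,v}(g)\phi_v=\mu_{\alpha,\chi,v}(g)\,r_v(\iota_{\alpha,v}(g))\phi_v$ rather than from the $\Gamma_1$-formula of Lemma \ref{lem:SY5.35.4}: for $1\le j<p$ the reduction $\overline{\zeta_p^j}$ is a nontrivial $p$-th root of unity, so $\zeta_p^j\in G_v-\Gamma_1$. Writing $\zeta_p^j=x_j+y_j\delta$, the scalar $\mu_{\alpha,\chi,v}(\zeta_p^j)=\chi_v(-\delta y_j(\zeta_p^j-1))\gamma_{F,v}(\alpha y_j(1-x_j)\psi_v)$ is already exactly the prefactor in the statement, so the entire content is the evaluation of $r_v(\iota_{\alpha,v}(\zeta_p^j))f_\beta$. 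First I would record the valuations that govern everything. Since $p$ is odd, $2$ is unramified in $F/\bQ$, so $\ord_v(2)=1$; and since $\overline{\zeta_p^{\pm j}}\ne 1$, the elements $\delta,\ \zeta_p^j+\zeta_p^{-j},\ \zeta_p^j-\zeta_p^{-j}$ are all units at $v$. Hence $\ord_v(x_j)=\ord_v(y_j)=-1$; in particular $y_j\ne 0$ and $x_j,y_j\in\tfrac12\cO_{F_v}^\times$, which is ultimately why the support $\tfrac12\cO_{F_v}$ appears.

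Because $y_j\ne 0$, the lower-left entry $y_j/(\Delta\alpha)$ of $\iota_{\alpha,v}(\zeta_p^j)\in\Sp_1(F_v)=\SL_2(F_v)$ is nonzero, so I would use the Bruhat decomposition $\iota_{\alpha,v}(\zeta_p^j)=n(t)\,\tau\,n(t)$ with $t=\Delta\alpha x_j/y_j$, $\tau=\lbrb{\begin{smallmatrix}0&-c^{-1}\\ c&0\end{smallmatrix}}$ and $c=y_j/(\Delta\alpha)$, and apply Rao's standard-section formulas factor by factor: each upper unipotent $n(t)$ multiplies by the quadratic phase $\psi_v(\tfrac12 t\,u^2)$, while $\tau$ acts by a Fourier transform carrying the Weil index $\gamma_{F,v}(\psi_v)$ together with the dilation by $c$. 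As $\ord_v(c)=-1$, this dilation contributes the normalization $|c|_v^{-1/2}=q_v^{-1/2}=2^{-k_v/2}$, which is precisely the constant in the statement. After these steps the problem collapses to a single twisted Gaussian integral $\int_{\beta+\cO_{F_v}}\psi_v\big(\tfrac12 t\,u'^2+(\text{linear in }u',u)\big)\,du'$.

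The heart of the matter, and the step I expect to be the main obstacle, is to turn this genuinely quadratic integral into the integral of an additive character. Substituting $u'=\beta+m$ with $m\in\cO_{F_v}$ isolates the quadratic term $\psi_v(\tfrac12 t\,m^2)$. Since $2$ is unramified I may take $\pi_v=2$, so $\tfrac12=\pi_v^{-1}$ and $z\mapsto\psi_v(\pi_v^{-1}z)$ factors through $\kappa(F_v)=\bF_{2^{k_v}}$ as the standard additive character $\bar z\mapsto(-1)^{\Tr_{\kappa(F_v)/\bF_2}(\bar z)}$. The key reduction is then the characteristic-$2$ trace identity $\Tr(c\,\bar m^{2})=\Tr(c^{2^{k_v-1}}\bar m)$, which gives
\[
\psi_v\big(\tfrac12 t\,m^2\big)=\psi_v\big(\tfrac12 t^{\,2^{k_v-1}}m\big)\qquad(m\in\cO_{F_v}),
\]
the promised passage from a quadratic to a linear term, and the Frobenius exponent $2^{k_v-1}$ is exactly what forces the exponent $2^{k_v-1}-1$ in the definition of $\beta$.

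The calibration of $\beta$ then does the rest, and this is the part I would check most carefully. Writing $\eta_0=\zeta_p^{(p-1)/2}+\zeta_p^{-(p-1)/2}$, one has $\overline{\delta}^2=\overline{\Delta}=\overline{\eta_0}^2$, hence $\overline{\delta}=\overline{\eta_0}$ by injectivity of Frobenius; reducing $x_j,y_j$ then gives $\overline{t}=\overline{\alpha\,\Delta x_j/y_j}=\overline{\alpha}\,\overline{\delta}^3=\overline{\alpha\eta_0^3}$, so that $\tfrac12 t^{\,2^{k_v-1}-1}\equiv\beta\pmod{\cO_{F_v}}$ by the very definition of $\beta$. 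Consequently the linear term $\tfrac12 t^{2^{k_v-1}}m$ produced by the reduction coincides modulo $\cO_{F_v}$ with the cross term $t\beta m$ coming from $u'^2$, their contributions combine to $\psi_v(2t\beta m)$, and since $2t\beta$ is a unit this is trivial on $\cO_{F_v}$. Only the linear phase from the Fourier kernel then survives, so $\int_{\cO_{F_v}}\psi_v(\cdots)\,dm$ collapses to $\meas(\cO_{F_v})$ times the indicator of the support condition $u\in\tfrac12\cO_{F_v}$. Pulling the constant phase $\psi_v(\tfrac12 t\beta^2)$ out of the first unipotent and recombining with the last unipotent's $\psi_v(\tfrac12 t u^2)$ and the surviving linear-in-$u$ phase reassembles $\psi_v\big(\tfrac{\Delta\alpha x_j}{2y_j}(u^2-2\beta u/x_j+\beta^2)\big)$, while the dilation gives $2^{-k_v/2}$ and $\mu_{\alpha,\chi,v}(\zeta_p^j)$ gives the remaining scalar. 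Throughout I would track Weil indices with Rao's explicit cocycle \cite{Rao}, following the $\bQ(\zeta_5)$ computation of \cite[Lemma 5.6]{SY03} that this generalizes.
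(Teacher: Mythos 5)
Your proposal is correct and takes essentially the same route as the paper: your symmetric factorization $n(t)\,\tau\,n(t)$ with $\tau=m(c^{-1})w$ becomes, after commuting $n(t)$ past $m(c^{-1})$, exactly the paper's decomposition $m\!\left(\frac{\Delta\alpha}{y_j}\right)n\!\left(\frac{x_jy_j}{\Delta\alpha}\right)w\,n\!\left(\frac{\Delta\alpha x_j}{y_j}\right)$, evaluated factor by factor via Rao's section just as in the paper. Your central steps — the characteristic-two trace identity $\Tr(c\,\bar m^{2})=\Tr(c^{2^{k_v-1}}\bar m)$ converting the Gaussian to a linear character (the paper's \eqref{lem:Weilunity-reduction}), the congruence $\frac{\Delta\alpha x_j}{y_j}\equiv\alpha\bigl(\zeta_p^{\frac{p-1}{2}}+\zeta_p^{-\frac{p-1}{2}}\bigr)^{3}\pmod{2\cO_{F_v}}$ calibrating $\beta$, and the valuation bookkeeping ($x_j,y_j\in\frac{1}{2}\cO_{F_v}^{\times}$, normalization $2^{-k_v/2}$, support $u\in\frac{1}{2}\cO_{F_v}$) — coincide with the paper's computation.
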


\begin{proof}
Fix $1\leq j<p$. It is known that
\begin{align*}
r_{v}(m(a))\phi(u)=|a|_{v}^{\frac{1}{2}}\phi(au), &\qquad
r_{v}(n(b))\phi(u)=\psi_{v}\left(\frac{1}{2}bu^{2}\right)\phi(u),\qquad
r_{v}(w)\phi(u)=\widehat{\phi}(u),\\
r_{v}(m(a)n(b)wm(a')n(b'))&=r_{v}(m(a))r_{v}(n(b))r_{v}(w)r_{v}(m(a'))r_{v}(n(b')),
\end{align*} 
where
$$m(a)=\begin{pmatrix}a & 0 \\ 0 & a^{-1}\end{pmatrix},\quad n(b)=\begin{pmatrix}1 & b \\ 0 & 1\end{pmatrix}\quad(a,b\in F_{v}, a\neq 0)$$
(cf. \cite[p.281]{SY03}).
Thus,
\begin{align*}
r_{v}(\iota_{\alpha,v}(\zeta_{p}^{j}))f_{\beta}(u)&=r_{v}\left(m\left(\frac{\Delta\alpha}{y_{j}}\right)n\left(\frac{x_{j}y_{j}}{\Delta\alpha}\right)wn\left(\frac{\Delta\alpha x_{j}}{y_{j}}\right)\right)f_{\beta}(u)\\
&=r_{v}\left(m\left(\frac{\Delta\alpha}{y_{j}}\right)\right)r_{v}\left(n\left(\frac{x_{j}y_{j}}{\Delta\alpha}\right)\right)\int_{F_{v}}f_{\beta}(t)\psi_{v}\left(\frac{\Delta\alpha x_{j}}{2y_{j}}t^{2}\right)\psi_{v}(-ut)dt\\
&=r_{v}\left(m\left(\frac{\Delta\alpha}{y_{j}}\right)\right)r_{v}\left(n\left(\frac{x_{j}y_{j}}{\Delta\alpha}\right)\right)\int_{\beta+\mathcal{O}_{F_{v}}}\psi_{v}\left(\frac{\Delta\alpha x_{j}}{2y_{j}}t^{2}\right)\psi_{v}(-ut)dt\\
&=r_{v}\left(m\left(\frac{\Delta\alpha}{y_{j}}\right)\right)r_{v}\left(n\left(\frac{x_{j}y_{j}}{\Delta\alpha}\right)\right)\psi_{v}\left(\frac{\Delta\alpha x_{j}}{2y_{j}}\beta^{2}-\beta u\right)\\
&\quad\quad\quad\quad\quad\quad\quad\quad\quad\quad\quad\quad\quad\quad\quad\times\int_{\mathcal{O}_{F_{v}}}\psi_{v}\left(\frac{\Delta\alpha x_{j}}{2y_{j}}t^{2}+\frac{\Delta\alpha x_{j}}{y_{j}}\beta t-ut\right)dt\\
&=\left|\frac{\Delta\alpha}{y_{j}}\right|_{v}^{\frac{1}{2}}\psi_{v}\left(\frac{\Delta\alpha x_{j}}{2y_{j}}\left(u^{2}-2\beta\frac{u}{x_{j}}+\beta^{2}\right)\right)\\
&\quad\quad\quad\quad\quad\quad\quad\quad\quad\quad\quad\quad\quad\times\int_{\mathcal{O}_{F_{v}}}\psi_{v}\left(\frac{\Delta\alpha x_{j}}{2y_{j}}t^{2}+\frac{\Delta\alpha x_{j}}{y_{j}}\beta t-\frac{\Delta\alpha}{y_{j}}ut\right)dt.
\end{align*}
Since $\alpha,\Delta\in\mathcal{O}_{F_{v}}^{\times}$ and $y_j\in\frac{1}{2}\mathcal{O}_{F_{v}}^{\times}$, $|\Delta\alpha/y_j|_{v}=2^{-k_v}$. It turns out that
$$r_{v}(\iota_{\alpha,v}(\zeta_{p}^{j}))f_{\beta}(u)=2^{-\frac{k_v}{2}}\psi_{v}\left(\frac{\Delta\alpha x_{j}}{2y_{j}}\left(u^{2}-2\beta\frac{u}{x_{j}}+\beta^{2}\right)\right)\int_{\mathcal{O}_{F_{v}}}\psi\left(\frac{\Delta\alpha x_{j}}{2y_{j}}t^{2}+\frac{\Delta\alpha x_{j}}{y_{j}}\beta t-\frac{\Delta\alpha}{y_{j}}ut\right)dt.$$
We know that $t^{2^{k_v}}\equiv t\pmod{\pi_{v}}$ for $t\in\mathcal{O}_{F_{v}}$, and it follows that
\begin{align*}
&\mathrm{Tr}_{\kappa(F_{v})/\mathbb{F}_{2}}\left(\frac{\Delta\alpha x_{j}}{y_{j}}t^{2}~(\mathrm{mod}~\pi_{v})\right)\\
&=\left(\frac{\Delta\alpha x_{j}}{y_{j}}\right)t^{2}+\left(\frac{\Delta\alpha x_{j}}{y_{j}}\right)^{2}t^{2^{2}}+\cdots+\left(\frac{\Delta\alpha x_{j}}{y_{j}}\right)^{2^{k_v-2}}t^{2^{k_v-1}}+\left(\frac{\Delta\alpha x_{j}}{y_{j}}\right)^{2^{k_v-1}}t\pmod{2}\\
&=\left(\frac{\Delta\alpha x_{j}}{y_{j}}\right)^{2^{k_v-1}}t+\left(\frac{\Delta\alpha x_{j}}{y_{j}}\right)t^{2}+\left(\frac{\Delta\alpha x_{j}}{y_{j}}\right)^{2}t^{2^{2}}+\cdots+\left(\frac{\Delta\alpha x_{j}}{y_{j}}\right)^{2^{k_v-2}}t^{2^{k_v-1}}\pmod{2}\\
&=\mathrm{Tr}_{\kappa(F_{v})/\mathbb{F}_{2}}\left(\left(\frac{\Delta\alpha x_{j}}{y_{j}}\right)^{2^{k_v-1}}t~(\mathrm{mod}~\pi_{v})\right).
\end{align*}
This means that
\begin{equation*}
    \mathrm{Tr}_{F_{v}/\mathbb{Q}_{2}}\left(\frac{\Delta\alpha x_{j}}{y_{j}}t^{2}-\left(\frac{\Delta\alpha x_{j}}{y_{j}}\right)^{2^{k_{v}}-1}t\right)\in 2\mathbb{Z}_{2}.
\end{equation*}
Following notation \eqref{def:lambdav}, we have $\psi_v=\psi_{\mathbb{Q}_2}\circ\mathrm{Tr}_{F_v/\mathbb{Q}_2}$ and $\psi_{\mathbb{Q}_2}$ vanishes on $\mathbb{Z}_2$. Consequently,
\begin{align}\label{lem:Weilunity-reduction}
\psi_{v}\left(\frac{\Delta\alpha x_{j}}{2y_{j}}t^{2}\right)=\psi_{v}\left(\frac{1}{2}\left(\frac{\Delta\alpha x_{j}}{y_{j}}\right)^{2^{k_v-1}}t\right).
\end{align}
Using this, we get
$$\int_{\mathcal{O}_{F_{v}}}\psi_{v}\left(\frac{\Delta\alpha x_{j}}{2y_{j}}t^{2}+\frac{\Delta\alpha x_{j}}{y_{j}}\beta t-\frac{\Delta\alpha}{y_j}ut\right)dt=\int_{\mathcal{O}_{F_{v}}}\psi_{v}\left(\frac{\Delta\alpha x_{j}}{y_{j}}\left(\frac{1}{2}\left(\frac{\Delta\alpha x_{j}}{y_{j}}\right)^{2^{k_v-1}-1}+\beta-\frac{u}{x_{j}}\right)t\right)dt.$$
The right-hand side is nontrivial if and only if
$$\frac{1}{2}\left(\frac{\Delta\alpha x_{j}}{y_{j}}\right)^{2^{k_v-1}-1}+\beta-\frac{u}{x_{j}}\in\mathcal{O}_{F_{v}},$$
which in turn gives 
\begin{align*}
\int_{\mathcal{O}_{F_{v}}}\psi_{v}\left(\frac{\Delta\alpha x_{j}}{2y_{j}}t^{2}+\frac{\Delta\alpha x_{j}}{y_{j}}\beta t-\frac{\Delta\alpha}{y_j}ut\right)dt=\mathrm{char}\left(\left(\frac{1}{2}\left(\frac{\Delta\alpha x_{j}}{y_{j}}\right)^{2^{k_v-1}-1}+\beta\right)x_{j}+\frac{1}{2}\mathcal{O}_{F_{v}}\right)(u).
\end{align*}
Recall that $\delta=\zeta_{p}^{-\frac{p-1}{2}}-\zeta_{p}^{\frac{p-1}{2}}$ and $\Delta=\delta^{2}$. Since $\zeta_{p}^{j}=x_{j}+y_{j}\delta$ and $y_{j}\in\frac{1}{2}\mathcal{O}_{F_{v}}^{\times}$, we have
\begin{align*}
\frac{\Delta\alpha x_{j}}{y_{j}}=\alpha\delta^2\left(\delta+\frac{\zeta_p^j}{y_j}\right)\equiv\alpha\left(\zeta_{p}^{\frac{p-1}{2}}+\zeta_{p}^{-\frac{p-1}{2}}\right)^{3}\pmod{2\mathcal{O}_{F_{v}}}.
\end{align*}
Hence
\begin{align*}
&\quad\int_{\mathcal{O}_{F_{v}}}\psi_{v}\left(\frac{\Delta\alpha x_{j}}{2y_{j}}t^{2}+\frac{\Delta\alpha x_{j}}{y_{j}}t\beta-\frac{\Delta\alpha}{y_{j}}ut\right)dt\\
&=\mathrm{char}\left(\left(\frac{1}{2}\left(\Delta\alpha\frac{x_{j}}{y_{j}}\right)^{2^{k_v-1}-1}+\beta\right)x_j+\frac{1}{2}\mathcal{O}_{F_{v}}\right)(u)\\
&=\mathrm{char}\left(\left(\frac{1}{2}\left(\alpha(\zeta_{p}^{\frac{p-1}{2}}+\zeta_{p}^{-\frac{p-1}{2}})^{3}\right)^{2^{k_v-1}-1}+\beta\right)x_j+\frac{1}{2}\mathcal{O}_{F_{v}}\right)(u)=\mathrm{char}\left(\frac{1}{2}\mathcal{O}_{F_{v}}\right)(u).
\end{align*}
\end{proof}

Now, we introduce the Weil index condition. Note that we use the bijection
\begin{align*}
\xymatrix{\mathbb{P}^1(F_v) \ar[r] & G_v & [a] \ar@{|->}[r] & \displaystyle g_a:=\frac{a+\delta}{a-\delta}}.
\end{align*}

\begin{condition} \label{cond:Weil}
Let $\beta$ be as in Lemma \ref{lem:Weilunity}, let $u=2\beta$, and let $\chi$ be a Hecke character with an element $\alpha$ satisfying (\ref{eqn:alpha}). \\
(1) We say that $\chi$ satisfies the Weil index condition for $\Gamma_1$ if
\begin{equation} \label{eqn:Weilcondat1}
\xi_{\alpha, \chi, v}(g) = \exp\left(\frac{\pi i}{2}T\right)
 \qquad \textrm{where} \qquad T := \Tr_{F_v/\bQ_2}\lbrb{ \frac{a \alpha u^2 \Delta^2}{a^2+\Delta} }, 
\end{equation}
holds for all $g=g_{a}\in\Gamma_{1}$. 
% where
% \begin{align*}
% 	T := \Tr_{F_v/\bQ_2}\lbrb{ \frac{a \alpha u^2 \Delta^2}{a^2+\Delta} }.
% \end{align*}
We note that the exponential is well-defined since $T \in \bQ_2$ and $\exp(2\pi i)=1$.

\noindent
(2) Let $\zeta_p^j = x_j + \delta y_j$. We say that $\chi$ satisfies the Weil index condition for roots of unity if
\begin{align} \label{eqn:Weilcondatzeta}
2^{-\frac{k_v}{2}}\sum_{j=1}^{p-1}\chi_{v}(-\delta y_{j}(\zeta_{p}^{j}-1))\gamma_{F, {v}}(\alpha y_{j}(1-x_{j})\psi_{v})\psi_{v}\left(\frac{\Delta\alpha(x_{j}-1)}{y_{j}}\beta^{2}\right) \neq -1.
\end{align}
\end{condition}

\begin{remark} \label{rmk:Weilcond roots}
Note that $\psi_v$ is a unitary character. By \cite[Theorem A.1]{Rao}, $\gamma_{F,v}$ is of absolute value $1$. If $\chi_v$ is unitary as well, then the left hand side of \eqref{eqn:Weilcondatzeta} has absolute value at most $2^{-\frac{k_v}{2}}(p-1)$.
Hence if the residue degree of $2$ in $K/\bQ$ is sufficiently large, then we do not need to check the Weil index conditions for roots of unity.
\end{remark}

The following proposition provides a sufficient condition for the group $\Gamma_{1}$ to act trivially on $f_{\beta}$. It will be useful when we compute the integral $I_{v}$ defined in \eqref{eqn:[G]I_v} later.

\begin{proposition}\label{prop:Weilgammaone}
	Assume that $\alpha \in \cO_{F_v}^\times$. Let $\beta$ be as in Lemma \ref{lem:Weilunity}, let $H\leq \Gamma_{1}$, and let $\{g_{i}\}$ be a set of coset representatives of $H$ in $\Gamma_{1}$. If $\chi$ satisfies (\ref{eqn:Weilcondat1}) for all $g_i$ and for all elements of $H$, then, $f_{\beta}$ is an eigenfunction of $\Gamma_{1}$ with trivial eigencharacter.
\end{proposition}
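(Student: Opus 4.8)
The plan is to reduce the statement to a single multiplicativity argument for a character of $\Gamma_1$. By Lemma \ref{lem:SY5.35.4}, every $g = x+y\delta \in \Gamma_1$ acts on $f_\beta$ by the scalar
\[
	c(g) := \xi_{\alpha, \chi, v}(g)\,\psi_v\!\lbrb{\frac{\Delta^2 \alpha y}{2x}\beta^2},
\]
so the assertion ``$f_\beta$ is a $\Gamma_1$-eigenfunction with trivial eigencharacter'' is equivalent to $c \equiv 1$ on $\Gamma_1$. The first thing I would record is that $c \colon \Gamma_1 \to \bC^\times$ is a \emph{group homomorphism}: since $\omega_{\alpha, \chi, v}$ is a genuine representation and $f_\beta$ is a fixed vector, $\omega_{\alpha, \chi, v}(g_1 g_2)f_\beta = \omega_{\alpha, \chi, v}(g_1)\omega_{\alpha, \chi, v}(g_2)f_\beta$ forces $c(g_1 g_2) = c(g_1)c(g_2)$ (this upgrades the fact, recorded in Lemma \ref{lem:SY5.35.4}, that $\xi_{\alpha, \chi, v}$ alone is a character). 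Consequently, to prove $c \equiv 1$ it suffices to verify $c = 1$ on a generating set, and in particular on the coset representatives $\{g_i\}$ together with all of $H$.

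The heart of the argument is to recognize that the Weil index condition \eqref{eqn:Weilcondat1} at $g = g_a$ says exactly $c(g_a) = 1$. I would pass to coordinates via the parametrization $g_a = \tfrac{a+\delta}{a-\delta}$: rationalizing (using $\overline{\delta}=-\delta$ and $\delta^2 = \Delta$) gives $x = \tfrac{a^2+\Delta}{a^2-\Delta}$ and $y = \tfrac{2a}{a^2-\Delta}$, hence $\tfrac{y}{x} = \tfrac{2a}{a^2+\Delta}$. Substituting this together with $\beta = u/2$, i.e. $\beta^2 = u^2/4$, into the $\psi_v$-factor yields
\[
	\frac{\Delta^2 \alpha y}{2x}\beta^2 = \frac{1}{4}\cdot\frac{a\,\alpha u^2 \Delta^2}{a^2 + \Delta}.
\]
Unwinding $\psi_v = e^{-2\pi i \lambda_v}$ and using the $\bQ_2$-linearity of $\Tr_{F_v/\bQ_2}$, this factor equals $\exp\!\lbrb{-\tfrac{\pi i}{2}T}$ with exactly the $T$ of \eqref{eqn:Weilcondat1}. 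Therefore $c(g_a) = \xi_{\alpha, \chi, v}(g_a)\exp\!\lbrb{-\tfrac{\pi i}{2}T}$, which is $1$ if and only if $\xi_{\alpha, \chi, v}(g_a) = \exp\!\lbrb{\tfrac{\pi i}{2}T}$, i.e. if and only if \eqref{eqn:Weilcondat1} holds.

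With this translation in hand, the conclusion is immediate: the hypothesis says $c(g_i)=1$ for every coset representative $g_i$ and $c(h)=1$ for every $h\in H$, so writing an arbitrary $g\in\Gamma_1$ as $g = g_i h$ and invoking multiplicativity gives $c(g)=c(g_i)c(h)=1$; hence $c\equiv 1$ and $f_\beta$ is the desired eigenfunction. I expect the only genuinely delicate step to be the computation in the middle paragraph: one must track the sign convention in $\psi_v = e^{-2\pi i \lambda_v}$ and the factor $\tfrac14$ coming from $\beta^2 = u^2/4$ so that the phase $\exp(-\tfrac{\pi i}{2}T)$ cancels $\xi_{\alpha,\chi,v}$'s phase \emph{exactly}; everything else is formal. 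A minor point worth confirming along the way is that $a\mapsto g_a$ sweeps out all of $\Gamma_1$ (the degenerate value $a=\infty$ giving $g=1$, where $c(1)=1$ trivially), so that \eqref{eqn:Weilcondat1} is indeed imposed on every element.
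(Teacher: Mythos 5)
Your proposal is correct and takes essentially the same approach as the paper: both use Lemma \ref{lem:SY5.35.4} to write the $\Gamma_{1}$-eigenvalue as $\xi_{\alpha,\chi,v}(g)\,\psi_{v}\bigl(\tfrac{\Delta^{2}\alpha y}{2x}\beta^{2}\bigr)$, substitute $x=\tfrac{a^{2}+\Delta}{a^{2}-\Delta}$, $y=\tfrac{2a}{a^{2}-\Delta}$ and $\beta=u/2$ to identify the $\psi_{v}$-phase with $\exp\bigl(-\tfrac{\pi i}{2}T\bigr)$ so that \eqref{eqn:Weilcondat1} is exactly the statement that the eigenvalue equals $1$, and then extend to all of $\Gamma_{1}$ by multiplicativity across the coset decomposition $g=g_{i}h$. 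Your explicit justification that the eigenvalue map is a homomorphism (via $\omega_{\alpha,\chi,v}$ being a genuine representation acting on the common eigenvector $f_{\beta}$) is precisely what the paper's appeal to the first part of Lemma \ref{lem:SY5.35.4} amounts to.
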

\begin{proof}
Let $\alpha\in\mathcal{O}_{F_v}^{\times}$ and let $\beta\in\frac{1}{2}\mathcal{O}_{F_{v}}$. By Lemma \ref{lem:SY5.35.4}, $f_{\beta}$ is an eigenfunction of $\Gamma_{1}$ with eigencharacter
\begin{equation*}\label{eq-one}
\xi_{\alpha,\chi, v}(g)\psi_{v}\left(\frac{\Delta^{2}\alpha y}{2x}\beta^{2}\right)\quad\quad\quad(g=x+y\delta\in\Gamma_{1}).
\end{equation*}
If $g=g_a=x+y\delta\in\Gamma_1$, then
\begin{align} \label{eqn:xy for ga}
x=\frac{a^{2}+\Delta}{a^{2}-\Delta},\quad y=\frac{2a}{a^{2}-\Delta}
\end{align}
which in turn implies that
\begin{align*}
\frac{\Delta^{2}\alpha y}{2x}\beta^{2}=\frac{a\alpha\Delta^{2}}{a^{2}+\Delta}\beta^{2}\in\frac{1}{4}\mathcal{O}_{F_{v}}
\end{align*}
If we write $\beta=u/2$  for $u\in\mathcal{O}_{F_{v}}$, then with $T$ as in Condition \ref{cond:Weil}, we have
\begin{align*}
\psi_{v}\left(\frac{\Delta^{2}\alpha y}{2x}\beta^{2}\right):=\exp\left(-2\pi i\mathrm{Tr}_{F_{v}/\mathbb{Q}_{2}}\left(\frac{\Delta^{2}\alpha y}{2x}\beta^{2}\right)\bmod\mathbb{Z}_2\right)
=\exp\left(-\frac{\pi i}{2}T\bmod\mathbb{Z}_2\right)=i^{-T}
\end{align*}
Since $\chi_{v}$ satisfies \eqref{eqn:Weilcondat1}, 
$$
\xi_{\alpha,\chi, v}(g)\psi_{v}\left(\frac{\Delta^{2}\alpha y}{2x}\beta^{2}\right)=1
$$
for all $g=g_{a}\in\{g_{i}\}\cup H$. From the first part of Lemma \ref{lem:SY5.35.4}, we deduce that
\begin{align*}
\xymatrix{\Gamma_1 \ar[r] & \mathbb{C}^\times & g=x+y\delta \ar@{|->}[r] & \displaystyle\xi_{\alpha,\chi,v}(g)\psi_{v}\left(\frac{\Delta^{2}\alpha y}{2x}\beta^{2}\right)}
\end{align*}
is a character. Hence $f_{\beta}$ is an eigenfunction of $\Gamma_{1}$ with trivial eigencharacter.
\end{proof}

For $f_i : F_v \to \bC$, we define
\begin{align*}
    \lara{f_1, f_2} := \int_{F_v} f_1 \overline{f_2}.
\end{align*}

\begin{proposition} \label{prop:Ivat2}
	Suppose that $\alpha \in \cO_{F_v}^\times$ and $\chi_v$ satisfies the Weil index conditions \eqref{eqn:Weilcondat1} and \eqref{eqn:Weilcondatzeta}.
Let $\beta$ be as in Lemma \ref{lem:Weilunity} and let $f_\beta$ be as in Lemma \ref{lem:SY5.35.4}.
If we choose $\phi_{v}=f_{\beta}$, then $I_v(x)=\mathrm{meas}(\Gamma_{1})\phi_{v}(x)$ and
\begin{align*}
C_v:=\int_{G_{v}=\Gamma_{1}\times\langle\zeta_{p}\rangle}\langle\omega_{\alpha,\chi,v}(g)\phi_{v},\phi_{v}\rangle dg\neq0.
\end{align*}
\end{proposition}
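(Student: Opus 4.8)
The plan is to treat the two assertions separately, deriving each from the decomposition $G_v=\Gamma_1\times\langle\zeta_p\rangle$ together with the $\Gamma_1$-invariance of $f_\beta$. First I would record that $f_\beta$ is fixed by $\Gamma_1$: since $\chi_v$ satisfies \eqref{eqn:Weilcondat1}, Proposition \ref{prop:Weilgammaone} (applied with $H=\Gamma_1$ and trivial coset representatives) shows that $\phi_v=f_\beta$ is an eigenfunction of $\Gamma_1$ with trivial eigencharacter, that is, $\omega_{\alpha,\chi,v}(h)\phi_v=\phi_v$ for all $h\in\Gamma_1$. For the inert prime $v\mid 2$ one takes $U_v=\Gamma_1$ in the factorization $U=\prod_v U_v$ of Section \ref{sec:sketch}; this is legitimate because the complementary factor $G_v/\Gamma_1\cong\langle\zeta_p\rangle$ is generated by the global root of unity $\zeta_p\in G(F)$, hence is absorbed into $G(F)$ in the identity $G(\bA_F)=G(F)U$. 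Integrating the invariance relation over $\Gamma_1$ then gives at once
\[
I_v(x)=\int_{\Gamma_1}\omega_{\alpha,\chi,v}(h)\phi_v(x)\,dh=\mathrm{meas}(\Gamma_1)\,\phi_v(x).
\]

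For $C_v$ I would reduce the integral over $G_v$ to a finite sum. Writing $G_v=\bigsqcup_{j=0}^{p-1}\Gamma_1\zeta_p^{j}$ and using that $\omega_{\alpha,\chi,v}$ is a genuine unitary representation of $G_v$, for each $j$ and each $h\in\Gamma_1$ one has $\langle\omega_{\alpha,\chi,v}(h\zeta_p^{j})\phi_v,\phi_v\rangle=\langle\omega_{\alpha,\chi,v}(\zeta_p^{j})\phi_v,\omega_{\alpha,\chi,v}(h)^{-1}\phi_v\rangle=\langle\omega_{\alpha,\chi,v}(\zeta_p^{j})\phi_v,\phi_v\rangle$, the last equality by the $\Gamma_1$-invariance just established. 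Thus the integrand is constant on each coset, whence
\[
C_v=\mathrm{meas}(\Gamma_1)\sum_{j=0}^{p-1}\langle\omega_{\alpha,\chi,v}(\zeta_p^{j})\phi_v,\phi_v\rangle.
\]
The term $j=0$ contributes $\langle f_\beta,f_\beta\rangle=\mathrm{meas}(\cO_{F_v})=1$ (self-dual normalization, available since $F/\bQ$ is unramified at $2$), so everything comes down to the terms with $1\le j<p$.

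The heart of the argument, and the step I expect to be the main obstacle, is the evaluation of $\langle\omega_{\alpha,\chi,v}(\zeta_p^{j})\phi_v,\phi_v\rangle$ for $1\le j<p$. I would insert the explicit formula of Lemma \ref{lem:Weilunity} for $\omega_{\alpha,\chi,v}(\zeta_p^{j})f_\beta$ and integrate it against $\overline{f_\beta}=\mathrm{char}(\beta+\cO_{F_v})$; as $\beta\in\tfrac12\cO_{F_v}$ the support $\beta+\cO_{F_v}$ already lies in $\tfrac12\cO_{F_v}$, so the cutoff $\mathrm{char}(\tfrac12\cO_{F_v})$ is harmless. Substituting $u=\beta+s$ with $s\in\cO_{F_v}$ and expanding the quadratic exponent yields a term $\tfrac{\Delta\alpha x_j}{2y_j}s^2$, a linear term in $s$, and the constant $\tfrac{\Delta\alpha(x_j-1)}{y_j}\beta^2$; applying the linearization \eqref{lem:Weilunity-reduction} to the $s^2$-term collapses the Gaussian integral into $\int_{\cO_{F_v}}\psi_v(cs)\,ds$ with $c=\tfrac12\bigl(\tfrac{\Delta\alpha x_j}{y_j}\bigr)^{2^{k_v-1}}+\tfrac{\Delta\alpha(x_j-1)}{y_j}\beta$. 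The crux is to verify $c\in\cO_{F_v}$: writing $A=\alpha(\zeta_p^{\frac{p-1}{2}}+\zeta_p^{-\frac{p-1}{2}})^{3}$, so that $\beta=\tfrac12 A^{2^{k_v-1}-1}$ and $\tfrac{\Delta\alpha x_j}{y_j}\equiv A\pmod{2\cO_{F_v}}$ by the proof of Lemma \ref{lem:Weilunity}, a binomial estimate gives $\bigl(\tfrac{\Delta\alpha x_j}{y_j}\bigr)^{2^{k_v-1}}\equiv A^{2^{k_v-1}}\pmod{2^{k_v}\cO_{F_v}}$; combining this with $y_j\in\tfrac12\cO_{F_v}^\times$ (so that $\tfrac{\Delta\alpha}{2y_j}\in\cO_{F_v}^\times$), one checks that the two summands of $c$ add up to an element of $\cO_{F_v}$. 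Since $\psi_v$ has conductor $\cO_{F_v}$, this gives $\int_{\cO_{F_v}}\psi_v(cs)\,ds=\mathrm{meas}(\cO_{F_v})=1$, so that $\langle\omega_{\alpha,\chi,v}(\zeta_p^{j})\phi_v,\phi_v\rangle$ equals precisely the $j$-th summand in \eqref{eqn:Weilcondatzeta}. Denoting by $S$ the left-hand side of \eqref{eqn:Weilcondatzeta}, I would conclude $\sum_{j=0}^{p-1}\langle\omega_{\alpha,\chi,v}(\zeta_p^{j})\phi_v,\phi_v\rangle=1+S$, which by the Weil index condition for roots of unity ($S\neq-1$) is nonzero; hence $C_v=\mathrm{meas}(\Gamma_1)(1+S)\neq 0$.
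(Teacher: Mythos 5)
Your proposal is correct and follows essentially the same route as the paper: trivial $\Gamma_1$-action on $f_\beta$ via Proposition \ref{prop:Weilgammaone} with $H=\Gamma_1$ gives $I_v(x)=\mathrm{meas}(\Gamma_1)f_\beta(x)$, the decomposition $G_v=\Gamma_1\times\langle\zeta_p\rangle$ reduces $C_v$ to $\mathrm{meas}(\Gamma_1)\bigl(1+\sum_{j=1}^{p-1}\langle\omega_{\alpha,\chi,v}(\zeta_p^j)f_\beta,f_\beta\rangle\bigr)$, and each summand is evaluated by Lemma \ref{lem:Weilunity} to be exactly the $j$-th term of \eqref{eqn:Weilcondatzeta}, so that \eqref{eqn:Weilcondatzeta} yields $C_v\neq 0$. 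Your substitution $u=\beta+s$ with the linearization \eqref{lem:Weilunity-reduction} and the verification $c\in\mathcal{O}_{F_v}$ is the same computation the paper performs, which it phrases as the integral collapsing to $\mathrm{char}\bigl(\frac{1}{2}\mathcal{O}_{F_v}\bigr)(\beta)=1$.
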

\begin{proof}
Since $\chi_{v}$ satisfies the Weil index condition \eqref{eqn:Weilcondat1},
$f_{\beta}$ is an eigenfunction of $\Gamma_1$ with trivial eigencharacter by Propsition \ref{prop:Weilgammaone} for $H = \Gamma_1$.
Hence,
\begin{align*}
I_{v}(x)&=\int_{\Gamma_{1}}\omega_{\alpha,\chi,v}(g)\phi_{v}(x)dg=\int_{\Gamma_{1}}\omega_{\alpha,\chi,v}(g)f_{\beta}(x)dg=\int_{\Gamma_{1}}f_{\beta}(x)dg=\mathrm{meas}(\Gamma_{1})f_{\beta}(x).
\end{align*}

On the other hand,
\begin{align*}
C_{v}&=\int_{G_{v}=\Gamma_{1}\times\langle\zeta_{p}\rangle}\langle\omega_{\alpha,\chi,v}(g)\phi_{v},\phi_{v}\rangle dg=\int_{\Gamma_{1}}\langle\omega_{\alpha,\chi,v}(g)f_{\beta},f_{\beta}\rangle dg\cdot\sum_{j=0}^{p-1}\langle\omega_{\alpha,\chi,v}(\zeta_{p}^{j})f_{\beta},f_{\beta}\rangle\\
&=\int_{\Gamma_{1}}\langle f_{\beta},f_{\beta}\rangle dg\cdot\sum_{j=0}^{p-1}\langle\omega_{\alpha,\chi,v}(\zeta_{p}^{j})f_{\beta},f_{\beta}\rangle=\mathrm{meas}(\Gamma_{1})\left(1+\sum_{j=1}^{p-1}\langle\omega_{\alpha,\chi,v}(\zeta_{p}^{j})f_{\beta},f_{\beta}\rangle\right).
\end{align*}
Fix $j=1,\ldots,p-1$ and write $\zeta_{p}^{j}=x_{j}+y_{j}\delta$. Since $\beta\in\frac{1}{2}\mathcal{O}_{F_{v}}$, we have $\beta+\mathcal{O}_{F_{v}}\subset\frac{1}{2}\mathcal{O}_{F_{v}}$. 
By Lemma \ref{lem:Weilunity},
\begin{align*}
\langle\omega_{\alpha,\chi,v}(\zeta_{p}^{j})f_{\beta},f_{\beta}\rangle
&=2^{-\frac{k_v}{2}}\xi_{\alpha,\chi,v}(\zeta_{p}^{j})\int_{F_{v}}\psi_{v}\left(\frac{\Delta\alpha x_{j}}{y_{j}}\left(u^{2}-2\beta\frac{u}{x_{j}}+\beta^{2}\right)\right)\mathrm{char}\left(\frac{1}{2}\mathcal{O}_{F_{v}}\right)(u)\overline{f_{\beta}(u)}du\\
&=2^{-\frac{k_v}{2}}\xi_{\alpha,\chi,v}(\zeta_{p}^{j})\int_{\beta+\mathcal{O}_{F_{v}}}\psi_{v}\left(\frac{\Delta\alpha x_{j}}{y_{j}}\left(u^{2}-2\beta\frac{u}{x_{j}}+\beta^{2}\right)\right) du\\
&=2^{-\frac{k_v}{2}}\xi_{\alpha,\chi,v}(\zeta_{p}^{j})\psi_{v}\left(\frac{\Delta\alpha(x_{j}-1)}{y_{j}}\beta^{2}\right)\int_{\mathcal{O}_{F_{v}}}\psi_{v}\left(\frac{\Delta\alpha x_{j}}{2y_{j}}u^{2}+\frac{\Delta\alpha x_{j}}{y_{j}}u\beta-\frac{\Delta\alpha}{y_{j}}\beta u\right)du\\
&=2^{-\frac{k_v}{2}}\xi_{\alpha,\chi,v}(\zeta_{p}^{j})\psi_{v}\left(\frac{\Delta\alpha(x_{j}-1)}{y_{j}}\beta^{2}\right)\mathrm{char}\left(\frac{1}{2}\mathcal{O}_{F_{v}}\right)(\beta)\\
&=2^{-\frac{k_v}{2}}\xi_{\alpha,\chi,v}(\zeta_{p}^{j})\psi_{v}\left(\frac{\Delta\alpha(x_{j}-1)}{y_{j}}\beta^{2}\right).
\end{align*}
It turns out that
\begin{align*}
C_{v}&=\mathrm{meas}(\Gamma_{1})\left(1+2^{-\frac{k_v}{2}}\sum_{j=1}^{p-1}\xi_{\alpha,\chi,v}(\zeta_{p}^{j})\psi_{v}\left(\frac{\Delta\alpha(x_{j}-1)}{y_{j}}\beta^{2}\right)\right)\\
&=\mathrm{meas}(\Gamma_{1})\left(1+2^{-\frac{k_v}{2}}\sum_{j=1}^{p-1}\chi_{v}(-\delta y_{j}(\zeta_{p}^{j}-1))\gamma_{F,v}(\alpha y_{j}(1-x_{j})\psi_{v})\psi_{v}\left(\frac{\Delta\alpha(x_{j}-1)}{y_{j}}\beta^{2}\right)\right).
\end{align*}
Since $\chi_{v}$ satisfies the condition \eqref{eqn:Weilcondatzeta}, we conclude that $C_{v}\neq 0$.

\end{proof}

\begin{proof}[Proof of Theorem \ref{main:main}]
Let
\begin{align*}
	U = \prod_{\substack{v, \textrm{ nonsplit } }} G_{v} \times \prod_{v, \textrm{ split }} \cO_{F_v}^\times.
\end{align*}
 Then by Theorem \ref{thm:Yang97main} and (\ref{eqn:theta11}), we have
\begin{align*}
	L\lbrb{1, \chi} = c\left|\frac{1}{2} \sum_{x \in F} \prod_v I_v(x)\right|^2
\end{align*}
for some nonzero constant $c$. By Proposition \ref{prop:Ivabovep}, Proposition \ref{prop:Ivsplit} and Proposition \ref{prop:Ivat2}, there is a nonzero constant $c_v$ and a compact subset $X_v$ in $F_v$ such that
\begin{align*}
	I_v(x) = c_v \ch( X_v )(x),
\end{align*}
where $v$ is a finite place of $F$. Therefore,
\begin{align*}
	L\lbrb{1, \chi} = \frac{c}{2}\cdot \lbrb{ \prod_{v, \textrm{ finite}} c_v } \cdot
	\left| \sum_{x \in X} \prod_{v, \textrm{real} } \phi_{v}(x)  \right|^2
\end{align*}
where $X = \cap_v (X_v \cap F)$. 
By Proposition \ref{prop:Ivaboveinf}, for each real place $v$ and $w | v$ we have
\begin{align*}
		\phi_v(x) = |2\sigma_w(\alpha \delta^3)|^{\frac{1}{4}} e^{-\pi |\sigma_w(\alpha \delta^3)|\sigma_v(x)^2}.
\end{align*}
Hence $\phi_v(x)$ is positive whenever $x \in F$.
Consequently, we have $L\left(1, \chi\right) \neq 0$.
\end{proof}

\subsection{Proof of Theorem \ref{main:on11}}

Now we assume that $2$ is inert in $K/\mathbb{Q}$. To prove Theorem \ref{main:on11} we have to check the Weil index condition \eqref{eqn:Weilcondat1} for $\Gamma_1$. First of all, in Proposition \ref{prop:Gammathree}, we will show that the Weil index condition holds for $\Gamma_3$. Taking $H=\Gamma_3$ in Proposition \ref{prop:Weilgammaone}, we deduce that it suffices to check the Weil index condition for the set of coset representatives of $\Gamma_3$ in $\Gamma_1$. Next, we study the structure of $\Gamma_1/\Gamma_3$. Finally, we study the case of $K = \bQ(\zeta_{11})$ and prove Theorem \ref{main:on11}.

\begin{lemma}[{\cite[Lemma 5.1]{SY03}}] \label{lem:SYlem5.1}
	Let $g_a = (a + \delta)/(a - \delta)$. Then, a map $a \to g_a$ gives a bijection between $\mathbb{P}^1(F_2)$ and $G_2$. Also,
	\begin{enumerate}
	\item $g_a \in \Gamma_1$ if and only if $a^2 \not\equiv \Delta \pmod{4}$.
	\item $g_a \in \Gamma_1 - \Gamma_2'$ if and only if $a \in \cO_{F_{2}}^\times$ and $a^2 \not\equiv \Delta \pmod{4}$.
	\item For $k>1$, $g_a \in \Gamma_k' - \Gamma_k$ if and only if $a \in 2^{k-1}\cO_{F_{2}}$.
	\item For $k>1$, $g_a \in \Gamma_k$ if and only if $a^{-1} \in 2^{k-1}\cO_{F_{2}}$.
	\end{enumerate}
\end{lemma}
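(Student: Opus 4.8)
The statement is \cite[Lemma 5.1]{SY03}; I sketch a self-contained reconstruction. The plan is to separate the bijection from the four membership criteria, and to reduce the criteria to a single valuation computation.

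\emph{The bijection.} Since $v\mid 2$ is inert, $K_w/F_v$ is the unramified quadratic extension, with nontrivial automorphism $x\mapsto\overline x$ satisfying $\overline\delta=-\delta$. By Hilbert 90 the norm-one group is $G_v=\lcrc{z/\overline z : z\in K_w^\times}$, and $z/\overline z=z'/\overline{z'}$ precisely when $z'/z\in F_v^\times$; hence $z\mapsto z/\overline z$ descends to a bijection $\bP(K_w)=\bP^1(F_v)\xrightarrow{\sim}G_v$. Writing $z=a+\delta$ in the affine chart (using $\lcrc{1,\delta}$ as an $F_v$-basis of $K_w$) yields $g_a=(a+\delta)/(a-\delta)$, while the point at infinity $z\in F_v^\times$ gives $g_\infty=1$. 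This is exactly the asserted bijection.

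\emph{The criteria.} The computation rests on the two identities $g_a-1=\dfrac{2\delta}{a-\delta}$ and $\Nm_{K_w/F_v}(a-\delta)=a^2-\Delta$, together with two facts at $v\mid 2$: $\delta$ is a unit (its norm to $\bQ$ is $\pm p$, which is odd) and $v$ is unramified over $2$, so $\ord_v(2)=1$. These give $\ord_v(a^2-\Delta)=2\,\ord_w(a-\delta)$, hence $\ord_w(g_a-1)=1-\tfrac12\ord_v(a^2-\Delta)$, from which membership in $\Gamma_k$ is read off immediately; membership in $\Gamma_k'$ is read off from the $\delta$-coordinate $y_a=\dfrac{2a}{a^2-\Delta}$ recorded in (\ref{eqn:xy for ga}). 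I would then run through the regimes $a\in\cO_{F_v}^\times$, $a\in\fm_v$ (together with $a=\infty$), and $\ord_v(a)<0$ separately; each of (1)--(4) then drops out of the displayed formulas for $\ord_w(g_a-1)$ and $\ord_v(y_a)$.

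\textbf{The main subtlety is residue characteristic $2$.} Because $-\delta\equiv\delta\pmod{\fm_w}$, the element $\delta$ reduces into $\kappa(F_v)$, so $a-\delta$ is a unit exactly when $\overline a\neq\overline\delta$; and crucially $\ord_v(a^2-\Delta)$ is always even, which collapses the congruences ``$\pmod 2$'' and ``$\pmod 4$'' into one and explains why (1)--(2) are phrased modulo $4$. One must also remember that $\lcrc{1,\delta}$ is \emph{not} an $\cO_{F_v}$-integral basis of $\cO_{K_w}$ (the discriminant $4\Delta$ is a non-unit), so the coordinate $y_a$ need not be integral once $g_a\notin\Gamma_1$; keeping track of this parity phenomenon and of the correct integral model is the only delicate point, the rest being routine valuation bookkeeping.
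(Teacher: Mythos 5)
Your proposal is correct, but note that the paper itself contains no proof of this statement: it is quoted verbatim from \cite[Lemma 5.1]{SY03} (where it is proved for $p=5$, $K=\bQ(\zeta_5)$), so the comparison is really with Stoll--Yang's computation, and your reconstruction is exactly that computation, carried out in the generality the paper needs ($v\mid 2$ inert in $K/F$). The two displayed identities do the work: since $K_2/F_2$ is unramified and $\delta$ is a unit at $2$ (its norm to $\bQ$ is $\pm p$), one has $\ord(g_a-1)=1-\tfrac12\ord(a^2-\Delta)$ with $\ord(a^2-\Delta)=2\,\ord(a-\delta)$ even, and $y_a=\frac{2a}{a^2-\Delta}$ from \eqref{eqn:xy for ga}. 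I checked the case analysis you leave as bookkeeping and it closes: for $a\in\cO_{F_2}^\times$ one gets $\ord(y_a)=1-\ord(a^2-\Delta)$, so $g_a\in\Gamma_1\setminus\Gamma_2'$ exactly when $\ord(a^2-\Delta)=0$, which by evenness is the mod-$4$ condition in (1)--(2); for $\ord(a)\geq 1$ one gets $\ord(g_a-1)=1$ and $\ord(y_a)=1+\ord(a)$, giving (3) with $k=1+\ord(a)$, i.e.\ $a\in 2^{k-1}\cO_{F_2}$; for $\ord(a)<0$ or $a=\infty$ one gets $\ord(g_a-1)=1-\ord(a)$, giving (4) with $a^{-1}\in 2^{k-1}\cO_{F_2}$. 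Your two flagged subtleties are genuine and correctly resolved: the evenness of $\ord(a^2-\Delta)$ is precisely why the criteria collapse from mod $2$ to mod $4$, and $\{1,\delta\}$ is indeed not an $\cO_{F_2}$-integral basis of $\cO_{K_2}$ (the different of the unramified extension is trivial while $\disc(1,\delta)=4\Delta$), so $y_a$ is integral only on $\Gamma_1$. In fact your last observation quietly corrects the paper's parenthetical note preceding Lemma \ref{lem:SY5.35.4}, which asserts $x,y\in\cO_{F_2}$ for every $g\in G_2$; this is inconsistent with the paper's own use of $y_j\in\frac12\cO_{F_2}^\times$ for $\zeta_p^j\in G_2\setminus\Gamma_1$ in Lemma \ref{lem:Weilunity}, and your valuation formulas show the correct statement is $\ord(y_a)\geq 1$ on $\Gamma_1$ and $\ord(y_a)\leq -1$ off it.
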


\begin{proposition}\label{prop:Gammathree}
Let $\beta$ be as in Lemma \ref{lem:Weilunity}, let $f_\beta$ be as in Lemma \ref{lem:SY5.35.4},
 and let $\chi$ be a Hecke character of $K$. If $\chi_{2}(-\delta)=1$ and $\chi_{2}(2)=-1$, then $f_{\beta}$ is invariant under the action of $\Gamma_{3}$. Equivalently, \eqref{eqn:Weilcondat1} holds for every $g\in\Gamma_{3}$.
\end{proposition}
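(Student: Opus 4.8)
The plan is to write down the eigencharacter by which $\Gamma_3$ acts on $f_\beta$ and check it is trivial; by Lemma~\ref{lem:SY5.35.4} this is the same as verifying \eqref{eqn:Weilcondat1} on $\Gamma_3$. First I would parametrize $g=g_a\in\Gamma_3$ by $b:=a^{-1}$, which by Lemma~\ref{lem:SYlem5.1}(4) ranges over $4\cO_{F_v}=\fm_v^2$ (here $v\mid 2$ is inert, so $\pi_v=2$ and $\ord_v(2)=1$). Feeding $g_a=(1+\delta b)/(1-\delta b)$ into \eqref{eqn:xy for ga} gives
\[
x=\frac{1+\Delta b^2}{1-\Delta b^2},\qquad y=\frac{2b}{1-\Delta b^2},
\]
from which $1-\delta b\in 1+\fm_w^2$. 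By Lemma~\ref{lem:SY5.35.4} the eigencharacter is $\theta(g)=\xi_{\alpha,\chi,v}(g)\,\psi_v\!\big(\tfrac{\Delta^2\alpha y}{2x}\beta^2\big)$, so I will show separately that the quadratic phase, the two Weil indices packaged in $\xi_{\alpha,\chi,v}$, and the Hecke factor $\chi_v(-\delta y(g-1))$ each become trivial on $\Gamma_3$.

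The phase is immediate: $\tfrac{\Delta^2\alpha y}{2x}\beta^2=\tfrac{\Delta^2\alpha b}{1+\Delta b^2}\beta^2\in\cO_{F_v}$ because $\alpha,\Delta\in\cO_{F_v}^\times$, $b\in\fm_v^2$ and $\beta^2\in\tfrac14\cO_{F_v}$; since $\psi_v=\psi_{\bQ_2}\circ\Tr_{F_v/\bQ_2}$ kills $\cO_{F_v}$, the phase is $1$. For the Weil indices I would pass to square classes via \eqref{eqn:square}. The closed forms give $\alpha y(1-x)=-(2b)^2(1-\Delta b^2)^{-2}\,\Delta\alpha b$ and $2\Delta\alpha xy=2^2(1-\Delta b^2)^{-2}(1+\Delta b^2)\,\Delta\alpha b$; as $1+\Delta b^2\in 1+\fm_v^4\subseteq(F_v^\times)^2$ (because $4\ge 2\ord_v(2)+1$), their square classes are $-\Delta\alpha b$ and $\Delta\alpha b$. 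Writing $c=\Delta\alpha b$ and using that $(-c)\psi_v$ is the complex conjugate of $c\psi_v$, the product of the two Weil indices is $\gamma_{F,v}((-c)\psi_v)\gamma_{F,v}(c\psi_v)=\overline{\gamma_{F,v}(c\psi_v)}\,\gamma_{F,v}(c\psi_v)=|\gamma_{F,v}(c\psi_v)|^2=1$ by \cite[Theorem A.1]{Rao}.

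Turning to the Hecke factor, from $g-1=2\delta b/(1-\delta b)$ together with the formula for $y$ one gets
\[
-\delta y(g-1)=\frac{-4\Delta b^2}{(1-\delta b)(1-\Delta b^2)}.
\]
Here $1-\Delta b^2\in\cO_{F_v}^\times$ and $-4\Delta b^2\in F_v^\times$ has even $v$-valuation, so both are annihilated by $\chi_v|_{F_v^\times}=\epsilon_{K/F,v}$, which is unramified since $v$ is inert (this is where $\chi_2(2)=-1$ enters only squared). Hence $\chi_v(-\delta y(g-1))=\chi_v(1-\delta b)^{-1}$, and combining the three steps yields $\theta(g)=\chi_v(1-\delta b)^{-1}$.

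The whole proposition therefore reduces to the single identity $\chi_v(1-\delta b)=1$ for all $b\in 4\cO_{F_v}$, and this is the step I expect to be the main obstacle. Since $1-\delta b\in 1+\fm_w^2$, it suffices that $\chi_v=\chi_w$ be trivial on $1+\fm_w^2$, i.e.\ that its conductor exponent at $w$ be at most $2$. In the unramified situation of Theorem~\ref{main:on11} this holds automatically, and then $\chi_2(-\delta)=1$ merely records that $\chi_w$ kills the unit $-\delta$ while $\chi_2(2)=-1$ marks $\chi_w$ as the nontrivial quadratic character on $F_v^\times$; in the presence of ramification one must instead check that the stated conditions, combined with $\chi|_F=\epsilon_{K/F}$ and the bounded conductor, force $\chi_w(1-\delta b)=1$. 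The delicate points to get right are the dyadic square-class bookkeeping in the Weil-index step and the verification that $\Gamma_3$ is exactly the depth placing $1-\delta b$ in $\ker\chi_w$ (whereas $\Gamma_2$ would only give $1-\delta b\in 1+\fm_w$).
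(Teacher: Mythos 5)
Your proposal is correct in every step it actually carries out, and it follows the same skeleton as the paper (reduce to the eigencharacter of Lemma \ref{lem:SY5.35.4}, parametrize $\Gamma_3$ by $b=a^{-1}\in 4\cO_{F_v}$ via Lemma \ref{lem:SYlem5.1}, then kill the quadratic phase, the Weil indices, and the Hecke factor separately), but your disposal of the Weil indices is genuinely different. The paper first invokes Rao's product formulas (\cite[Theorem A.4, Corollary A.5]{Rao}) to get the closed form \eqref{eqn:xi rep}, $\xi_{\alpha,\chi,2}(g_a)=\chi_{2}(-\delta y(g-1))(a\Delta,a^{2}+\Delta)_{F,2}\gamma_{F,2}(a^{2}+\Delta,\psi)$, and then uses $1+\Delta a^{-2}\equiv 1\pmod{16}$ plus the local square theorem to see that $a^{2}+\Delta$ is a square, which simultaneously trivializes the Hilbert symbol and, via \eqref{eqn:Windratio} and \eqref{eqn:square}, the normalized index. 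You instead pair the two raw Weil indices: the scalars $\alpha y(1-x)$ and $2\Delta\alpha xy$ equal $-c$ and $c$ with $c=\Delta\alpha b$ modulo squares (your use of the local square theorem, applied to $1+\Delta b^2$), so the indices are complex conjugates of modulus one and multiply to $1$. This is more elementary and avoids Hilbert symbols entirely; what the paper's route buys is the reusable formula \eqref{eqn:xi rep}, which it needs again for the coset representatives of $\Gamma_3$ in $\Gamma_1$ in Lemma \ref{lem:11Gamma1} and the proof of Theorem \ref{main:on11}, where the symbol and index do \emph{not} trivialize. Your phase computation is the same as the paper's observation that $T\in 4\bZ_2$.

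Concerning the step you flag as the main obstacle, $\chi_v(1-\delta b)=1$: you have correctly located where the content sits, and in fact the paper's own proof passes over it. The paper asserts $\chi_{2}(-\delta y(g-1))=(-1)^{\ord_{2}(g-1)+\ord_{2}(y)}$ directly from $\chi_2(-\delta)=1$ and $\chi_2(2)=-1$, but your factorization $-\delta y(g-1)=-4\Delta b^{2}(1-\Delta b^{2})^{-1}(1-\delta b)^{-1}$ shows this tacitly also uses that $\chi_2$ is trivial on $1+\fm_w^{2}$; the $F$-rational factors die only because $\chi|_{F}=\epsilon_{K/F}$ is unramified at the inert place. This implicit conductor bound holds in both places the proposition is applied: $\chi_2$ is unramified in Theorem \ref{main:on11}, and has conductor exponent $1$ in Theorem \ref{main:hyperelliptic} by Lemma \ref{lem:11hyperelliptic} (iii) once $A\equiv 1\pmod 4$. (Your computation even shows that on $\Gamma_3$ the hypotheses $\chi_2(-\delta)=1$ and $\chi_2(2)=-1$ enter only through even valuations and so cancel; they become load-bearing only for the $\Gamma_1/\Gamma_3$ cosets treated afterwards.) So your reduction is not a gap relative to the paper's argument — it makes explicit an assumption the paper leaves implicit — but be aware that as literally stated, with no bound on the conductor of $\chi_2$, neither your proof nor the paper's closes without it.
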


\begin{proof}
By using Lemma \ref{lem:SYlem5.1}, we may write $g=g_a$. By using \cite[Theorem A.4, Corollary A.5]{Rao}, we can deduce (cf. \cite[Lemma 5.4]{SY03})
\begin{align} \label{eqn:xi rep}
  \xi_{\alpha,\chi,2}(g)=\chi_{2}(-\delta y(g-1))(a\Delta,a^{2}+\Delta)_{F,2}\gamma_{F,2}(a^{2}+\Delta,\psi).
\end{align}
Since $\chi_{2}(-\delta)=1$ and $\chi_{2}(2)=-1$,
$$\chi_{2}(-\delta y(g-1))=(-1)^{\mathrm{ord}_{2}(g-1)+\mathrm{ord}_{2}(y)}.$$
By Lemma \ref{lem:SYlem5.1}, $1+\frac{\Delta}{a^{2}}\equiv 1\pmod{16}$. Thus, by the local square theorem, $1+\frac{\Delta}{a^{2}}$ is a square in $F_{2}$, and so $a^{2}+\Delta$ is a square in $F_{2}$. Since $(a,b)_{F,2}\in\{\pm 1\}$ and $(a,bc)_{F,2}=(a,b)_{F,2}(a,c)_{F,2}$ for all $a,b,c\in F_{2}^{\times}$, $(a\Delta,a^{2}+\Delta)_{F,2}=1$. Furthermore, by \eqref{eqn:Windratio} and \eqref{eqn:square}, $\gamma_{F,2}(a^{2}+\Delta,\psi)=1$.
On the other hand, $\frac{a\alpha u^{2}\Delta^{2}}{a^{2}+\Delta}\in 4\mathcal{O}_{F_{2}}$ by Lemma \ref{lem:SYlem5.1}. Hence $T = \Tr_{F_v/\bQ_2}\lbrb{ \frac{a \alpha u^2 \Delta^2}{a^2+\Delta} }$ is in $4\mathbb{Z}_{2}$.
\end{proof}

By the \emph{Galois ring}, we mean the commutative ring of characteristic $p^s$ with $p^{ms}$ elements.
It is known that such a ring is isomorphic to $(\bZ/p^s\bZ)[x]/(h(x))$ for any irreducible polynomial $h$ over $\bZ/p^s\bZ$ whose degree is $m$ (cf. \cite[Chapter 14]{Wan}). Let $\xi$ be a zero of $h$. Then an element of the Galois ring can be written as
\begin{align*}
	a_0 + a_1 \xi + a_2 \xi^2 + \cdots + a_{m-1} \xi^{m-1}, \qquad a_i \in \bZ/p^s\bZ.
\end{align*}
We recall a result on the structure of the unit group of the Galois ring.

\begin{theorem}[{cf. \cite[Theorem 14.11]{Wan}}] \label{thm:Galoisunit}
	Let $R$ be a Galois ring over $\bZ/p^s\bZ$ with $p^{ms}$-elements. Then the unit group of $R$ is the direct product
\begin{align*}
R^\times=G_1\times G_2
\end{align*}
where $G_1$ is the cyclic group of order $p^m-1$ and $G_2$ is a group of order $p^{(s-1)m}$ such that
	\begin{enumerate}
		\item if $p$ is odd or if $p=2$ and $s\leq2$, then $G_2$ is a direct product of $m$ cyclic groups each of order $p^{s-1}$.
		\item if $p=2$ and $s\geq3$, then $G_2$ is a direct product of a cyclic group of order $2$, a cyclic group of order $2^{s-2}$ and $m-1$ cyclic groups each of order $2^{s-1}$.
	\end{enumerate}	 
	In (2), the generators of $G_2$ are
	\begin{itemize}
		\item $1 + 2 + \cdots + 2^{s-2}$ of order $2$,
		\item $1 + 4b$ of order $2^{s-2}$ where $b$ is a lifting of any element of $\bF_{2^m}\backslash N$ for $N = \lcrc{a^2 + a : a \in \bF_{2^m}}$,
		\item $1 + 2\xi^i$ for $i = 1, \cdots, m-1$ of order $2^{s-1}$.
	\end{itemize}	  
\end{theorem}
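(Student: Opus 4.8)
The plan is to realize $R=\mathrm{GR}(p^{s},p^{ms})$ concretely as the local ring $(\bZ/p^{s}\bZ)[x]/(h(x))$, free of rank $m$ over $\bZ/p^{s}\bZ$, with maximal ideal $pR$ and residue field $R/pR\cong\bF_{p^{m}}$, and then to peel off the structure in stages. First I would split off the prime-to-$p$ part: Hensel's lemma lifts the Teichm\"uller representatives and produces a multiplicative section $\bF_{p^{m}}^{\times}\to R^{\times}$ whose image $G_{1}$ is cyclic of order $p^{m}-1$. Since $|R^{\times}|=p^{(s-1)m}(p^{m}-1)$ and $G_{1}$ (of order prime to $p$) meets the $p$-group $1+pR$ trivially, a cardinality count gives $R^{\times}=G_{1}\times G_{2}$ with $G_{2}=1+pR$ of order $p^{(s-1)m}$. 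Everything then reduces to determining the abelian-group structure of the group of principal units $G_{2}$.

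For $G_{2}$ I would use the filtration $1+pR\supset1+p^{2}R\supset\cdots\supset1+p^{s}R=\{1\}$, whose successive quotients are all isomorphic to the additive group $(\bF_{p^{m}},+)$. In the easy regimes the answer is forced. For $p$ odd the $p$-adic logarithm is a group isomorphism $1+pR\xrightarrow{\sim}pR\cong(\bZ/p^{s-1}\bZ)^{m}$, giving case (1). For $p=2$ and $s\le2$ every element of $1+2R$ squares to $1$, so $G_{2}$ is elementary abelian of rank $m$, again case (1).

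The hard case is $p=2$, $s\ge3$, and this is where I expect the real work. Here the logarithm converges only on $1+4R$, where it still gives an isomorphism $1+4R\cong4R\cong(\bZ/2^{s-2}\bZ)^{m}$. The key device is the squaring map: for $u=1+2c$ one computes $u^{2}=1+4(c+c^{2})$, which ties the quadratic behaviour of $G_{2}$ to the Artin--Schreier map $\mathrm{AS}(c)=c^{2}+c$ on $\bF_{2^{m}}$, whose kernel is $\bF_{2}$ and whose image $N$ is the index-$2$ subgroup appearing in the theorem. Using this I would pin down the orders of the three families of generators: $\omega=1+2+\cdots+2^{s-2}=2^{s-1}-1$ satisfies $\omega^{2}=1$, hence has order $2$; for $b$ a unit lifting an element of $\bF_{2^{m}}\setminus N$, the element $1+4b$ lies in $1+4R$ with leading log-term $4b$ of additive order $2^{s-2}$; and for $1\le i\le m-1$ one has $\xi^{i}\notin\bF_{2}$, so $(1+2\xi^{i})^{2}=1+4\,\mathrm{AS}(\xi^{i})$ lies in $1+4R\setminus(1+8R)$ and has order $2^{s-2}$, forcing $1+2\xi^{i}$ to have order $2^{s-1}$.

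Finally I would show these $m+1$ elements both generate $G_{2}$ and do so independently. Modulo $1+4R$ the images of $\omega,1+2\xi,\dots,1+2\xi^{m-1}$ are exactly $1,\xi,\dots,\xi^{m-1}$, an $\bF_{2}$-basis of $2R/4R\cong\bF_{2^{m}}$; and modulo $1+8R$ the squares $(1+2\xi^{i})^{2}$ realize $\mathrm{AS}(\xi^{i})$, which form a basis of $N$ because $\mathrm{AS}$ is injective on the complement $\langle\xi,\dots,\xi^{m-1}\rangle$ of its kernel $\langle1\rangle$, while $1+4b$ supplies the remaining direction $b\notin N$. A Nakayama/Frattini-quotient induction up the filtration then upgrades these spanning statements to generation of all of $G_{2}$. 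Since the product of the orders is $2\cdot2^{s-2}\cdot(2^{s-1})^{m-1}=2^{(s-1)m}=|G_{2}|$, generation forces the decomposition to be a direct product, completing case (2). The main obstacle is precisely this last case: tracking the exact $2$-adic orders through the squaring map and proving \emph{independence} (not merely generation) of the proposed generators, for which the Artin--Schreier dichotomy between $N$ and $\bF_{2^{m}}\setminus N$ is the crucial structural input.
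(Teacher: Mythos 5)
Your proposal is correct, but note that the paper itself contains no proof of this statement: it is quoted verbatim from Wan's book (Theorem 14.11 of \cite{Wan}) as background, so there is no internal argument to compare against. What you have written is in effect a correct reconstruction along the same classical lines as the cited source: the Teichm\"uller/Hensel splitting $R^\times=G_1\times(1+pR)$, the filtration $1+p^kR$ with graded quotients $(\bF_{p^m},+)$, the identity $(1+2c)^2=1+4(c+c^2)$ tying the squaring map to the Artin--Schreier map (whose kernel $\bF_2$ and index-two image $N$ are exactly what make the case $p=2$, $s\ge3$ deviate from the homocyclic answer), and generation plus the order count $2\cdot2^{s-2}\cdot(2^{s-1})^{m-1}=2^{m(s-1)}$ to force the product to be direct. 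All the delicate checkpoints are sound: $\omega=2^{s-1}-1$ squares to $1$ since $2s-2\ge s$; $\bar\xi^{\,i}\notin\bF_2$ for $1\le i\le m-1$ because $\bar\xi$ has degree $m$ over $\bF_2$, so $(1+2\xi^i)^2=1+4(\xi^i+\xi^{2i})$ has a unit in the $4$-coefficient and order $2^{s-2}$; and $b\notin N$ is precisely equivalent to $1+4b\notin(1+2R)^2$, which is what your independence step needs. Two spots deserve an explicit line in a full write-up: the convergence of the $2$-adic logarithm on $1+4R$ in the truncated ring is mildly delicate and can be bypassed altogether by the binomial estimate $(1+2^{k-1}c)^2\equiv1+2^kc\pmod{2^{k+1}R}$ for $k\ge3$, which also yields $1+2^3R\subseteq(1+2R)^2$, the exact input your Frattini induction uses; and to conclude generation from your mod-$4$ and mod-$8$ spanning statements you should either invoke that inclusion directly or verify that the $2$-torsion of $G_2$ has order $2^{m+1}$ (a short Hensel count on $c+c^2\equiv0\bmod 2^{s-2}$), so that the Frattini quotient has rank exactly $m+1$. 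With those sentences added, your argument is a complete and self-contained proof of the quoted theorem.
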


Fix a local field and let $\cO$ be its ring of integers, $\fm$ the maximal ideal of $\cO$
 and $U^{(n)} := 1 + \fm^{n}$.
Then there is a natural isomorphism $\cO^\times/U^{(n)} \cong (\cO/\fm^n)^\times$.
%\begin{align*}
%\xymatrix{
%0 \ar[r] & U^{(n)} \ar[r] & \cO^\times \ar[r] & \lbrb{\frac{\cO}{\fm^n}}^\times \ar[r] & 0
%}
%\end{align*}
%is exact. Hence we obtain a commutative diagram
%\begin{align*}
%\xymatrix{
%1 \ar[r]  & U^{(n+r)} \ar[r] \ar[d] & U^{(n)} \ar[r] \ar[d] & \frac{U^{(n)}}{U^{(n+r)}} \ar[r] \ar[d]^{\gamma} & 0\\
%1 \ar[r] & U^{(n+r)} \ar[r] & \cO^\times \ar[r]  & \lbrb{\frac{\cO}{\fm^{n+r}}}^\times \ar[r] & 0 
%}
%\end{align*}
%Applying snake lemma to the above sequences, we have $\ker \gamma = 1$ and the cokernel of $\gamma$ is isomorphic to $\cO^\times/U^{(n)} \cong (\cO/\fm^n)^\times$. Hence,
Together with
\begin{align*}
\xymatrix{1 \ar[r] & \displaystyle\frac{U^{(n)}}{U^{(n+r)}} \ar[r] & \displaystyle\lbrb{\frac{\cO}{\fm^{n+r}}}^\times \ar[r] & \displaystyle
\lbrb{\frac{\cO}{\fm^n}}^\times \ar[r] & 1  }
\end{align*}
for $n = 1$ and $r=2$, we obtain
\begin{align*}
\xymatrix{
1 \ar[r] & U_{K_{2}}^{(3)} \ar[r] \ar[d] & U_{K_{2}}^{(1)} \ar[r] \ar[d] &
\ker\lbrb{\displaystyle \lbrb{\frac{\cO_{K_{2}}}{8 \cO_{K_{2}}}}^\times \to \lbrb{\frac{\cO_{K_{2}}}{2\cO_{K_{2}}}}^\times } \ar[r] \ar[d] & 0 \\
1 \ar[r] & U_{F_{2}}^{(3)} \ar[r] & U_{F_{2}}^{(1)} \ar[r] & \ker\lbrb{\displaystyle \lbrb{\frac{\cO_{F_{2}}}{8 \cO_{F_{2}}}}^\times \to \lbrb{\frac{\cO_{F_{2}}}{2\cO_{F_{2}}}}^\times } \ar[r] & 0 
}
\end{align*}
where the vertical arrows are the norm maps from $K_2$ to $F_2$.
Since the norm map on $U^{(3)}$ is surjective, $\Gamma_1/\Gamma_3$ is isomorphic to the kernel of the norm map on the right column.

\begin{proposition} \label{prop:str of X}
The group
\begin{align} \label{eqn:groupX}
\Gamma_1/\Gamma_3 \cong
 \ker \lbrb{\Nm_{K_2/F_2} : \ker\lbrb{ \lbrb{\frac{\cO_{K_{2}}}{8 \cO_{K_{2}}}}^\times \to \lbrb{\frac{\cO_{K_{2}}}{2\cO_{K_{2}}}}^\times } \to \ker\lbrb{ \lbrb{\frac{\cO_{F_{2}}}{8 \cO_{F_{2}}}}^\times \to \lbrb{\frac{\cO_{F_{2}}}{2\cO_{F_{2}}}}^\times }}
\end{align}
is isomorphic to
\begin{align*}
	\bZ/2\bZ \oplus \bZ/2\bZ \oplus (\bZ/4\bZ)^{\oplus \frac{p-3}{2}}.
\end{align*}
Furthermore, the generators are
\begin{itemize}
	\item $(1 + 2)$ which is of order $2$,
	\item $\prod_{i=1}^{\frac{p-1}{2}}(1 + 2\zeta_p^i)(1 + 2\zeta_p^{-i})^{-1}$ which is of order $2$, and
	\item $(1+2\zeta_p^{i})(1+2\zeta_p^{-i})^{-1}$ for $i = 2, \cdots, \frac{p-1}{2}$ each of which is of order $4$.
\end{itemize}
\end{proposition}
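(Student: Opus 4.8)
The plan is to use the identification, established just above, of $\Gamma_1/\Gamma_3$ with $\ker\left(\Nm_{K_2/F_2}\colon G^{(K)} \to G^{(F)}\right)$, where $G^{(K)} := \ker\left((\cO_{K_2}/8\cO_{K_2})^\times \to (\cO_{K_2}/2\cO_{K_2})^\times\right) = U_{K_2}^{(1)}/U_{K_2}^{(3)}$ and likewise for $F$. Since $2$ is inert, $K_2/F_2$ is the unramified quadratic extension, with $\Gal(K_2/F_2)$ generated by complex conjugation $x \mapsto \overline x$ (so $\overline{\zeta_p} = \zeta_p^{-1}$ and $\Nm(u) = u\overline u$) and residue fields $\bF_{2^{p-1}} \supseteq \bF_{2^{(p-1)/2}}$. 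Applying Theorem \ref{thm:Galoisunit} to the Galois ring $\cO_{K_2}/8\cO_{K_2}$ with primitive element $\zeta_p$ (whose reduction generates $\bF_{2^{p-1}}$ because $2$ is a primitive root mod $p$), I get $G^{(K)} \cong \bZ/2 \oplus \bZ/2 \oplus (\bZ/4)^{p-2}$ with generators $3 = 1+2$, a unit $1 + 4b$ with $\bar b \notin N = \{x^2+x\}$, and $1 + 2\zeta_p^i$ for $1 \le i \le p-2$. Because the norm on an unramified extension is surjective on each graded quotient $U^{(n)}/U^{(n+1)}$, it is surjective $G^{(K)} \to G^{(F)}$, so $|\ker\Nm| = |G^{(K)}|/|G^{(F)}| = 2^{2(p-1)}/2^{p-1} = 2^{p-1}$, matching the order of the claimed group.

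Next I would exhibit the proposed generators inside $\ker\Nm$ and compute their orders through the filtration. Writing $w_i := (1+2\zeta_p^i)(1+2\zeta_p^{-i})^{-1} = (1+2\zeta_p^i)/\overline{(1+2\zeta_p^i)}$, one has $\Nm(w_i) = w_i\overline{w_i} = 1$ and $\Nm(3) = 9 \equiv 1 \pmod 8$, so $3$, each $w_i$, and $W := \prod_{i=1}^{(p-1)/2} w_i$ lie in $\ker\Nm$. On the graded pieces $U^{(1)}/U^{(2)} \cong \bF_{2^{p-1}}$ ($1+2a \mapsto \bar a$) and $U^{(2)}/U^{(3)} \cong \bF_{2^{p-1}}$ ($1+4a \mapsto \bar a$), squaring induces the Artin--Schreier map $\wp(\bar a) = \bar a^2 + \bar a$. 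Here $w_i \mapsto \eta_i := \bar\zeta_p^i + \bar\zeta_p^{-i}$ and $3 \mapsto 1$ in the first quotient, while $W \mapsto \sum_{i=1}^{(p-1)/2}\eta_i = \sum_{m=1}^{p-1}\bar\zeta_p^m = 1$. For $p \ge 5$ one has $\eta_i \notin \bF_2$, so $w_i^2 \mapsto \wp(\eta_i) \neq 0$ and $w_i$ has order $4$; clearly $3$ has order $2$; and $W^2 \mapsto \wp(1) = 0$ with $W \not\equiv 1$, so $W$ has order exactly $2$ (the case $p = 3$ is degenerate).

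Since these generators have orders $2, 2, 4, \dots, 4$, the subgroup $H = \langle 3, W, w_2, \dots, w_{(p-1)/2}\rangle$ has order at most $2 \cdot 2 \cdot 4^{(p-3)/2} = 2^{p-1}$; as $H \subseteq \ker\Nm$ and $|\ker\Nm| = 2^{p-1}$, it then suffices to prove $|H| = 2^{p-1}$, after which the direct-product decomposition follows automatically. I would split this across the filtration. The images $1, \eta_2, \dots, \eta_{(p-1)/2}$ of $3, w_2, \dots, w_{(p-1)/2}$ in $U^{(1)}/U^{(2)}$ form an $\bF_2$-basis of $\bF_{2^{(p-1)/2}}$ (the $\eta_i$ are independent since $\eta_i = \bar\zeta_p^i + \bar\zeta_p^{p-i}$ pairs disjoint members of the basis $\{\bar\zeta_p^m\}_{m=1}^{p-1}$, and $1 = \sum_i \eta_i$), so the image of $H$ has order $2^{(p-1)/2}$. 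Identifying $H \cap U^{(2)} = \langle 3W, w_2^2, \dots, w_{(p-1)/2}^2\rangle$, the classes $c_i = \wp(\eta_i)$ for $i \ge 2$ are independent and span $\im\wp = \ker\left(\Tr_{\bF_{2^{(p-1)/2}}/\bF_2}\right)$, the only relation among all $c_i$ being $\sum_{i=1}^{(p-1)/2} c_i = \wp(1) = 0$.

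The main obstacle is the remaining independence: that the class $[3W] \in U^{(2)}/U^{(3)}$ is not in $\langle c_i\rangle$, equivalently $\Tr_{\bF_{2^{(p-1)/2}}/\bF_2}([3W]) \neq 0$. This is where global cyclotomic input enters. Using $\prod_{i=1}^{p-1}(1+2\zeta_p^i) = (-2)^{p-1}\Phi_p(-\tfrac12) = (1+2^p)/3$ together with $W = P/\overline P$ for $P = \prod_{i=1}^{(p-1)/2}(1+2\zeta_p^i)$, I would rewrite $3W = 9P^2/(1+2^p) \equiv P^2 \pmod 8$, so that $[3W] = [P^2] = \sum_{i=1}^{(p-1)/2}\left(\bar\zeta_p^i + \bar\zeta_p^{2i}\right)$. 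Then
\begin{align*}
\Tr_{\bF_{2^{(p-1)/2}}/\bF_2}\left([3W]\right) = \sum_{j=0}^{(p-3)/2}\sum_{i=1}^{(p-1)/2}\left(\bar\zeta_p^{2^j i} + \bar\zeta_p^{2^{j+1} i}\right),
\end{align*}
and reindexing $j \mapsto j+1$ in the second summand cancels in pairs all terms with $1 \le j \le (p-3)/2$, leaving $\sum_i \bar\zeta_p^i + \sum_i \bar\zeta_p^{2^{(p-1)/2} i} = \sum_i \bar\zeta_p^i + \sum_i \bar\zeta_p^{-i} = \sum_{m=1}^{p-1}\bar\zeta_p^m = 1$, using $2^{(p-1)/2} \equiv -1 \pmod p$. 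Hence $[3W] \notin \langle c_i\rangle$, so $H \cap U^{(2)}$ has order $2^{(p-1)/2}$ and $|H| = 2^{p-1}$; therefore $H = \ker\Nm \cong \Gamma_1/\Gamma_3$ with the displayed generators and structure $\bZ/2 \oplus \bZ/2 \oplus (\bZ/4)^{(p-3)/2}$.
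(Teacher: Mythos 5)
Your proposal is correct, and its skeleton coincides with the paper's: both use the identification of $\Gamma_1/\Gamma_3$ with the norm-one kernel set up just before the proposition, invoke Wan's structure theorem for the Galois-ring unit groups $H_K$ and $H_F$, deduce $|\ker\Nm| = 2^{p-1}$ from surjectivity of the norm, exhibit the same three families of generators, and both exploit the cyclotomic identity $\prod_{i=0}^{p-1}(1+2\zeta_p^i) = 1+2^p$. Where you genuinely diverge is in the two verifications. The paper proves norm surjectivity by explicitly computing norms of Wan's generators of $H_K$ and checking they generate $H_F$, and then settles orders and independence by coordinate bookkeeping in $H_K$ relative to Wan's generating set, via the congruence $h \equiv (1+2)(1+2\zeta_p)^2\cdots(1+2\zeta_p^{(p-1)/2})^2 \pmod{8\cO_{K_2}}$ (the point, left implicit there, being that $h$ carries a nonzero exponent on the generator $1+2\zeta_p$, which $3$ and $w_2,\dots,w_{(p-1)/2}$ never touch). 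You instead obtain surjectivity abstractly from unramifiedness of $K_2/F_2$ (trace on graded quotients), and run the independence through the filtration $U^{(1)}\supset U^{(2)}\supset U^{(3)}$ with the Artin--Schreier identification of squaring, reducing everything to the single computation $\Tr_{\bF_{2^{(p-1)/2}}/\bF_2}([3W]) = 1$, carried out by telescoping with $2^{(p-1)/2}\equiv -1 \pmod p$. Your route buys independence from the precise generator list in Wan's theorem (only order and exponent are used), conceptual explanations of the orders ($W^2 \mapsto \wp(1)=0$, and $w_i$ of order $4$ because $\eta_i \notin \bF_2$ for $p\geq 5$), and a fully explicit version of the independence step the paper asserts tersely; the paper's route is shorter once Wan's generators are in hand, since the same norm computations serve double duty for surjectivity. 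Two small points to tighten: before applying $\Tr_{\bF_{2^{(p-1)/2}}/\bF_2}$ you should record that $[3W]$ lies in $\bF_{2^{(p-1)/2}}$ --- immediate because $\Nm(3W)=1$ and the norm induces $x\mapsto x+x^{2^{(p-1)/2}}$ on $U^{(2)}/U^{(3)}$ (and if it did not lie in the subfield, $[3W]\notin\langle c_i\rangle$ would hold trivially); and the case $p=3$ you flag as degenerate is in fact covered vacuously, the $(\bZ/4\bZ)$-part being empty there.
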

\begin{proof}
For the sake of notational simplicity, we denote $H$ for the group (\ref{eqn:groupX}).
%Since $2$ is inert in $K/\bQ$, each element of $\cO_{K_{2}}/8\cO_{K_{2}}$ can be written by
%\begin{align*}
%	a_0 + a_1 \zeta_p + \cdots a_{p-2}\zeta_p^{p-2}, \qquad a_i \in \bZ/8\bZ
%\end{align*}
%and element of $\cO_{F_{2}}/8\cO_{F_{2}}$ written by
%\begin{align*}
%	b_0 + b_1 (\zeta_p+\zeta_p^{-1}) + \cdots + b_{\frac{p-1}{2} - 1} (\zeta_p^{\frac{p-1}{2}-1} + \zeta_p^{-\frac{p-1}{2}+1}), \qquad b_i \in \bZ/8\bZ.
%\end{align*}
By Theorem \ref{thm:Galoisunit}, $(\cO_{K_{2}}/8\cO_{K_{2}})^\times$ is isomorphic to a product of a group of roots of unity and $\bZ/2\bZ \oplus \bZ/2\bZ \oplus (\bZ/4\bZ)^{\oplus (p-2)}$.
Since $2$ is inert in $K/\bQ$, $(\cO_{K_2}/2\cO_{K_2})^\times$ is the group of roots of unity.
Hence
\begin{align*}
	H_K := \ker\lbrb{ \lbrb{\frac{\cO_{K_{2}}}{8 \cO_{K_{2}}}}^\times \to \lbrb{\frac{\cO_{K_{2}}}{2\cO_{K_{2}}}}^\times }
	\cong \bZ/2\bZ \oplus \bZ/2\bZ \oplus (\bZ/4\bZ)^{\oplus (p-2)}
\end{align*}
which is generated by
\begin{align*}
	\lcrc{1+2, 1+4\zeta_p} \cup \lcrc{1 + 2\zeta_p^i}_{i=1}^{p-2}.
\end{align*}
Similarly,
\begin{align*}
	H_F := \ker\lbrb{ \lbrb{\frac{\cO_{F_{2}}}{8 \cO_{F_{2}}}}^\times \to \lbrb{\frac{\cO_{F_{2}}}{2\cO_{F_{2}}}}^\times }
	\cong \bZ/2\bZ \oplus \bZ/2\bZ \oplus (\bZ/4\bZ)^{\oplus \frac{p-3}{2}}
\end{align*}
and this is generated by
\begin{align*}
	\lcrc{3, 1+4(\zeta_p + \zeta_p^{-1})} \cup \lcrc{1 + 2(\zeta_p + \zeta_p^{-1})^i}_{i=1}^{\frac{p-3}{2}}.
\end{align*}

One can show that $\Nm_{K_2/F_2}(1 + 2\zeta_p^i) = 5 + 4(\zeta_p^i + \zeta_p^{-i})$ is an element of 
$H_F$ of order $4$.
Also, $\Nm_{K_2/F_2}(1 + 2\zeta_p^i)$ for $i = 2,  \cdots, \frac{p-1}{2}$ give $\frac{p-3}{2}$ independent elements of order $4$ in $H_F$.
On the other hand, for an odd prime $p$, we have
\begin{align*}
\Nm_{K_2/F_2}(1 + 4\zeta_p) & \equiv 1 + 4(\zeta_p + \zeta_p^{-1}) \pmod{8\cO_{F_{2}}}
\end{align*}
and
\begin{align*}
	\prod_{i=1}^{\frac{p-1}{2}}(5 + 2(\zeta_p^i + \zeta_p^{-i})) \equiv 3 \pmod{8}.
\end{align*}
Hence the norm map from $H_K$  to $H_F$ is surjective. Consequently, we have
\begin{align*}
	H = \ker\lbrb{\Nm_{K_2/F_2} : H_K \to H_F} \cong \bZ/2\bZ \oplus \bZ/2\bZ \oplus (\bZ/4\bZ)^{\oplus \frac{p-3}{2}}.
\end{align*}

The norm computation shows that $3$ and $(1+2\zeta_p^{i})(1+2\zeta_p^{-i})^{-1}$ for $i = 2, \cdots, p-2$  in $H_K$ are in $H$ and the order is $2$ and $4$ respectively.
Consider an element
\begin{align*}
	h = \frac{1 + 2\zeta_p}{1+2\zeta_p^{-1}} \cdot \frac{1 + 2\zeta_p^2}{1+2\zeta_p^{-2}} \cdots
	\frac{1 + 2\zeta_p^{\frac{p-1}{2}}}{1+2\zeta_p^{\frac{p+1}{2}}} \in H.
\end{align*}
Since $(1+2)(1 + 2\zeta_p) \cdots (1 + 2\zeta_p^{p-1}) = 1 + 2^{p}$, we have
\begin{align*}
	(1+2\zeta_p)(1 + 2\zeta_p^{-1})^{-1} \equiv (1+2)(1 + 2\zeta_p)^2 (1 + 2\zeta_p^2) \cdots (1 + 2\zeta_p^{p-2}) \pmod{8\cO_{K_2}}.
\end{align*}
Therefore,
\begin{align*}
	\frac{1 + 2\zeta_p}{1+2\zeta_p^{-1}} \cdot \frac{1 + 2\zeta_p^2}{1+2\zeta_p^{-2}} \cdots
	\frac{1 + 2\zeta_p^{\frac{p-1}{2}}}{1+2\zeta_p^{\frac{p+1}{2}}} \equiv
	(1 + 2)(1 + 2\zeta_p)^2 (1 + 2\zeta_p^2)^2 \cdots (1 + 2\zeta_p^{\frac{p-1}{2}})^2 \pmod{8\cO_{K_2}}.
\end{align*}
Since $(1 + 2\zeta_p^i)^4 \equiv 1 \pmod{8\cO_{K_{2}}}$,
\begin{align*}
	h^2 &\equiv (1 + 2)^2(1 + 2\zeta_p)^4 (1 + 2\zeta_p^2)^4 \cdots (1 + 2\zeta_p^{\frac{p-1}{2}})^4\equiv1 \pmod{8\cO_{K_2}}.
\end{align*}
Hence we found an element of order $2$ in $H$, which is not in a subgroup of $H$ generated by $(1+2), (1+2\zeta_p^i)(1+2\zeta_p^{-i})^{-1}$ for $i = 2, \cdots, p-2$.
\end{proof}

\begin{lemma} \label{lem:repofga}
	The group $\Gamma_1/\Gamma_3$ is generated by
\begin{align*}
\left\{\frac{\zeta_p^{\frac{p-1}{2}} + 3\zeta_p^{-\frac{p-1}{2}}}{3\zeta_p^{\frac{p-1}{2}} + \zeta_p^{-\frac{p-1}{2}}}\right\}\cup\left\{\prod_{i=1}^{\frac{p-1}{2}}\frac{1 + 2\zeta_p^i}{1 + 2\zeta_p^{-i}}\right\}\cup\left\{\frac{1+2\zeta_p^{i}}{1+2\zeta_p^{-i}}\right\}_{i=2,\cdots,\frac{p-1}{2}}
\end{align*}
which are of norm $1$. Moreover, along the map $\mathbb{P}^1(F_2)\rightarrow G_2$ sending $a$ to $g_a$,
	\begin{itemize}
	\item $2(\zeta_p^{\frac{p-1}{2}} + \zeta_p^{-\frac{p-1}{2}})$ corresponds to $\displaystyle\frac{\zeta_p^{\frac{p-1}{2}} + 3\zeta_p^{-\frac{p-1}{2}}}{3\zeta_p^{\frac{p-1}{2}} + \zeta_p^{-\frac{p-1}{2}}}$,
	\item $\displaystyle\frac{\prod_{i=1}^{\frac{p-1}{2}}(1+2\zeta_p^i) + \prod_{i=1}^{\frac{p-1}{2}}(1 + 2\zeta_p^{-i})    }{\prod_{i=1}^{\frac{p-1}{2}}(1+2\zeta_p^i) - \prod_{i=1}^{\frac{p-1}{2}}(1 + 2\zeta_p^{-i})}( \zeta_p^{-\frac{p-1}{2}}-\zeta_p^{\frac{p-1}{2}})$ corresponds to $\displaystyle\prod_{i=1}^{\frac{p-1}{2}}\frac{1 + 2\zeta_p^i}{1 + 2\zeta_p^{-i}}$,
	\item $\displaystyle\frac{(1 + \zeta_p^{i} + \zeta_p^{-i})}{\zeta_p^i - \zeta_p^{-i}}( \zeta_p^{-\frac{p-1}{2}}-\zeta_p^{\frac{p-1}{2}})$ corresponds to $\displaystyle\frac{1+2\zeta_p^{i}}{1+2\zeta_p^{-i}}$ for each $i=2,\cdots,\frac{p-1}{2}$.
\end{itemize}	
\end{lemma}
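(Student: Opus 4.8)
The plan is to transport the three families of generators for $\Gamma_1/\Gamma_3$ furnished by Proposition \ref{prop:str of X} to genuine norm-$1$ elements of $G_2$, and then to recover the parameter $a$ of each by inverting the bijection $a\mapsto g_a=(a+\delta)/(a-\delta)$ of Lemma \ref{lem:SYlem5.1}. Two of the three families require no modification. Writing $\overline{\,\cdot\,}$ for the nontrivial element of $\Gal(K/F)$, which sends $\zeta_p\mapsto\zeta_p^{-1}$, each factor $(1+2\zeta_p^i)/(1+2\zeta_p^{-i})$ satisfies $\overline{g}=g^{-1}$ and hence has norm $1$; the same holds for the product $\prod_{i=1}^{\frac{p-1}{2}}(1+2\zeta_p^i)/(1+2\zeta_p^{-i})$. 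Moreover all of these are $\equiv 1\pmod 2$, so they lie in $\Gamma_1$, and they are literally the generators appearing in Proposition \ref{prop:str of X}.

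The only generator of Proposition \ref{prop:str of X} that is not of norm $1$ is $1+2=3\in F_2^\times$, whose norm to $F_2$ is $9$. First I would replace it by the norm-$1$ element $g:=(\omega+3\overline{\omega})/(3\omega+\overline{\omega})$, where $\omega:=\zeta_p^{\frac{p-1}{2}}$, again using $\overline{g}=g^{-1}$. To see that $g$ represents the same class as $1+2$, I would compute $g-3=-8\omega/(3\omega+\overline{\omega})$ and argue that $3\omega+\overline{\omega}$ is a unit in $\cO_{K_2}$. This is the one point that needs genuine care, and I expect it to be the main obstacle: since $3\omega+\overline{\omega}\equiv\omega+\overline{\omega}\pmod 2$, it suffices to know that $\omega+\overline{\omega}=\zeta_p^{\frac{p-1}{2}}+\zeta_p^{-\frac{p-1}{2}}$ is a unit at $2$, which follows from writing it as $\zeta_p^{-\frac{p-1}{2}}(1+\zeta_p^{p-1})$ and recognizing $1+\zeta_p^{p-1}=(1-\zeta_p^{2(p-1)})/(1-\zeta_p^{p-1})$ as a cyclotomic unit (both exponents being coprime to $p$). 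Granting this, $g-3\in 8\cO_{K_2}$, so $g$ and $1+2$ agree in $U_{K_2}^{(1)}/U_{K_2}^{(3)}$. Since reduction modulo $U^{(3)}$ realizes the isomorphism of Proposition \ref{prop:str of X}, the three displayed families generate $\Gamma_1/\Gamma_3$ and consist of norm-$1$ elements, as claimed.

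It remains to identify the parameters $a$. Solving $g=(a+\delta)/(a-\delta)$ yields $a=\delta(g+1)/(g-1)$, and substituting each generator is then routine, using $\delta=\overline{\omega}-\omega$. For $g=(\omega+3\overline{\omega})/(3\omega+\overline{\omega})$ one finds $g+1=4(\omega+\overline{\omega})/(3\omega+\overline{\omega})$ and $g-1=2\delta/(3\omega+\overline{\omega})$, whence $a=2(\omega+\overline{\omega})$. For $g=P/\overline{P}$ with $P=\prod_{i=1}^{\frac{p-1}{2}}(1+2\zeta_p^i)$, one gets $a=\delta(P+\overline{P})/(P-\overline{P})$. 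Finally, for $g=(1+2\zeta_p^i)/(1+2\zeta_p^{-i})$ one gets $a=\delta(1+\zeta_p^i+\zeta_p^{-i})/(\zeta_p^i-\zeta_p^{-i})$. Each of these matches the stated correspondence once $\delta=\zeta_p^{-\frac{p-1}{2}}-\zeta_p^{\frac{p-1}{2}}$ is expanded, completing the proof.
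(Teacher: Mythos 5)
Your proposal is correct and follows essentially the same route as the paper's proof: transport the generators of Proposition \ref{prop:str of X} to genuine norm-$1$ elements, realize the class of $1+2$ by $g_a$ with $a = 2(\zeta_p^{\frac{p-1}{2}}+\zeta_p^{-\frac{p-1}{2}})$, and recover each parameter by inverting $a = \delta(g+1)/(g-1)$. Your verification that $g-3 = -8\zeta_p^{\frac{p-1}{2}}/(3\zeta_p^{\frac{p-1}{2}}+\zeta_p^{-\frac{p-1}{2}})$ lies in $8\mathcal{O}_{K_2}$, via the cyclotomic-unit argument for $\zeta_p^{\frac{p-1}{2}}+\zeta_p^{-\frac{p-1}{2}}$, merely supplies the details the paper compresses into ``one can check.''
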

\begin{proof}
By Lemma \ref{lem:SYlem5.1} and Proposition \ref{prop:str of X}, we need to find an $a \in \cO_{F_{2}}$ satisfying
\begin{align*}
	g_a \equiv 1 + 2 \pmod{8\cO_{K_{2}}}, \qquad a^2 \not \equiv \Delta \pmod{4\cO_{F_{2}}},
\end{align*} 
%The first condition is equivalent to
%\begin{align*}
%	2a \equiv 4\delta \pmod{8\cO_{K_{2}}}
%\end{align*}
%where $\delta = \zeta_p^{-\frac{p-1}{2}}-\zeta_p^{\frac{p-1}{2}}.$
One can check that  $a = 2(\zeta_p^{\frac{p-1}{2}} + \zeta_p^{-\frac{p-1}{2}})$ and corresponding $g_a$ satisfy the conditions. On the other hand, $\prod_{i=1}^{\frac{p-1}{2}}(1 + 2\zeta_p^i)(1 + 2\zeta_p^{-i})^{-1}$ and $(1+2\zeta_p^{i})(1+2\zeta_p^{-i})^{-1}$ for $i = 2, \cdots, \frac{p-1}{2}$ are in the kernel of the norm. Using the relation between $g_a$ and $a$, one can easily compute $a$.
\end{proof}

From now on, we concretely study the case of $K = \bQ(\zeta_{11})$.

\begin{lemma} \label{lem:11Gamma1}
	Let $p = 11$, let $\beta$ be as in Lemma \ref{lem:Weilunity}, and let $u=2\beta$.
	Assume that $\alpha \in \bZ_2^\times$. Then, $\gamma_F(\psi) = \frac{-1+i}{\sqrt{2}}$ and
	\begin{align*}
	\begin{array}{|c|c|c|c|c|c|c|c|c|c|c|}
	\hline
		g_a 						& 1+2	& \frac{1+2\zeta_p^{2}}{1+2\zeta_p^{-2}} &\frac{1+2\zeta_p^{3}}{1+2\zeta_p^{-3}} &\frac{1+2\zeta_p^{4}}{1+2\zeta_p^{-4}} & \frac{1+2\zeta_p^{5}}{1+2\zeta_p^{-5}} & \prod_{i=1}^{\frac{p-1}{2}} \frac{1 + 2\zeta_p^i}{1 + 2\zeta_p^{-i}} 	\\ \hline
	\gamma_{F,2}(a^2+\Delta,\psi)	& 1		&  -1 & -1 & -1 & -1 & 1 \\ \hline
	(a\Delta, a^2+\Delta)_{F, 2} & 1 & 1 & -1 & -1 & -1 & -1 \\ \hline
	\Tr_{F_2/\bQ_2}\lbrb{\frac{a\alpha u^2 \Delta^2}{a+\Delta}} \pmod{4} & 2 & 2 & 0 & 0 & 0 & 0
	\\ \hline
	\end{array}
	\end{align*}
\end{lemma}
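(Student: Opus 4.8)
The plan is to read Lemma \ref{lem:11Gamma1} as an explicit finite computation. For $p=11$ Lemma \ref{lem:repofga} furnishes exactly six coset representatives $g_a$ of $\Gamma_3$ in $\Gamma_1$, each with an explicit $a$, and for each I must evaluate the three tabulated quantities $\gamma_{F,2}(a^2+\Delta,\psi)$, $(a\Delta, a^2+\Delta)_{F,2}$ and $T=\Tr_{F_2/\bQ_2}\!\left(a\alpha u^2\Delta^2/(a^2+\Delta)\right)$ of Condition \ref{cond:Weil}, along with the normalizing factor $\gamma_F(\psi)$. The first step is to fix the local arithmetic: since $2$ is a primitive root modulo $11$, it is inert in $K/\bQ$, so $F_2$ is the unramified extension of $\bQ_2$ of degree $5$, with uniformizer $2$, residue field $\bF_{32}$ and $k_v=5$; being unramified, $\psi_v$ has conductor $\cO_{F_2}$, so $m(\psi)=0$ and $\widetilde{m}=\ord_2(2)=1$ in \eqref{eqn:Weildyadic}. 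I also record the two identities that control everything: $\Delta=\zeta_{11}+\zeta_{11}^{-1}-2$ is a $2$-adic unit, and for $\theta:=\zeta_{11}^5+\zeta_{11}^{-5}$ one has $\theta^2=\Delta+4$ (so $\theta$ is a unit as well).

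With $\widetilde{m}=1$, formula \eqref{eqn:Weildyadic} collapses $\gamma_F(\psi)$ to the quadratic Gauss sum $2^{-5/2}\sum_{x\in\bF_{32}}\psi_v(x^2/4)$, which I would evaluate directly to $(-1+i)/\sqrt{2}$; as a consistency check, this is the fifth power of $\gamma_{\bQ_2}(\psi_{\bQ_2})=(1-i)/\sqrt{2}$, matching the unramified degree $5$.

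For the six entries I proceed as follows. For each Weil index I invoke \eqref{eqn:square}: $\gamma_{F,2}(a^2+\Delta,\psi)$ depends only on the square class of $a^2+\Delta$, so it is enough to strip off square factors and evaluate on a representative through \eqref{eqn:Weildyadic}. Here $\theta^2=\Delta+4$ is decisive; for instance, for $a=2\theta$ it gives $a^2+\Delta=5\theta^2-4\equiv 1\pmod{8}$, which is a square by the local square theorem ($1+8\cO_{F_2}\subseteq(\cO_{F_2}^\times)^2$ in the unramified case), whence the index is $1$, while for the representatives coming from $(1+2\zeta_{11}^i)/(1+2\zeta_{11}^{-i})$ and their product one writes $a^2+\Delta$ as $\Delta$ times an explicit unit and reads off its class modulo $8$. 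The Hilbert symbols $(a\Delta, a^2+\Delta)_{F,2}$ then follow by bilinearity from these square classes. Finally, substituting $u=2\beta=(\alpha\theta^3)^{15}$ (so $u^2=\alpha^{30}\theta^{90}$) into $T$ and using $\alpha^{31}\equiv\alpha\pmod{8}$ (because $\alpha^2\equiv1\pmod{8}$ for odd $\alpha$) shows that the residue of $T$ modulo $4$ merely changes by a sign under $\alpha\mapsto3\alpha$; thus a value in $\{0,2\}$, the two sign-fixed residues, is independent of the chosen $\alpha\in\bZ_2^\times$, and deciding that the value indeed lies in $\{0,2\}$, and which one, reduces to an $\bF_2$-trace computation in $\bF_{32}$.

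The main obstacle is the dyadic part, namely the Weil indices and Hilbert symbols rather than the traces, since both are genuinely subtle at a prime above $2$. Concretely, the work is to pin down the square classes of $a^2+\Delta$ and $a\Delta$ in the degree-$5$ unramified field $F_2$, that is, to compute the explicit algebraic numbers built from $\zeta_{11}$ modulo $8$. The leverage that makes this tractable is precisely the reduction \eqref{eqn:square} to square classes together with the single relation $\theta^2=\Delta+4$, which rewrites every $a^2+\Delta$ in a $2$-adically controllable form.
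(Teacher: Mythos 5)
Your overall route is the same as the paper's: the paper also proves this lemma by a direct finite verification, listing the six representatives $a_{1},\dots,a_{6}$ from Lemma \ref{lem:repofga} as explicit polynomials in $\omega=\zeta_{11}+\zeta_{11}^{-1}$, reducing to $\alpha=1$ exactly as you do (via $\alpha^{31}\equiv\alpha\pmod{8}$, so that $T\equiv\alpha\Tr_{F_2/\bQ_2}(X)\pmod 8$ and the sign-fixed residues $0,2$ are $\alpha$-independent), checking that each $a_i^{2}+\Delta$ is a $2$-adic unit so that $m((a^{2}+\Delta)\psi)=m(\psi)$ and \eqref{eqn:Weildyadic} applies, and using the square-invariance \eqref{eqn:square} to clear denominators (the paper replaces $a_6^2+\Delta$ by $3917^{2}(a_6^{2}+\Delta)$, which is your square-class reduction in action). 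The only difference is that the paper delegates the actual evaluation of the Gauss sums, Hilbert symbols, and traces to a Sagemath notebook (\texttt{Weilindex11}), whereas you propose to do the dyadic arithmetic by hand using $\theta^{2}=\Delta+4$ for $\theta=\zeta_{11}^{5}+\zeta_{11}^{-5}$. Your setup of the local data ($F_2/\bQ_2$ unramified of degree $5$, $m(\psi)=0$, $\widetilde m=1$, $\gamma_F(\psi)=2^{-5/2}\sum_{x\in\cO_{F_2}/2\cO_{F_2}}\psi_v(x^{2}/4)$) is correct and matches the paper.

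There are, however, two concrete flaws in the plan. First, your illustrative computation for $a=2\theta$ is wrong as stated: $a^{2}+\Delta=5\theta^{2}-4=5\omega+6$ reduces to $\bar\omega\neq 1$ in $\bF_{32}$, so it is not $\equiv 1\pmod 8$, and the containment $1+8\cO_{F_2}\subseteq(\cO_{F_2}^{\times})^{2}$ cannot be invoked directly. The repair is to factor $5\theta^{2}-4=\theta^{2}\lbrb{1+4(1-\theta^{-2})}$ and use the genuine dyadic square criterion: $1+4u$ is a square iff $\Tr_{\bF_{32}/\bF_2}(\bar u)=0$ (Artin--Schreier); here $\Tr(1-\theta^{-2})\equiv 1+\Tr(\bar\theta^{-1})\equiv 1+1=0$ since $\Tr_{F/\bQ}(\theta^{-1})=-3$, so the entry $\gamma_{F,2}(a_1^{2}+\Delta,\psi)=1$ survives, but by a different argument than the one you gave. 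Second, the trace row cannot ``reduce to an $\bF_2$-trace computation in $\bF_{32}$'': deciding whether $T\equiv 0$ or $2\pmod 4$ requires the trace on $\cO_{F_2}/4\cO_{F_2}$, i.e.\ mod-$4$ lifts of $a$, $u^{2}=\alpha^{30}\theta^{90}$, $\Delta^{2}$ and $(a^{2}+\Delta)^{-1}$; the residue field only detects the parity of $T$. (Relatedly, note the exponent $u=2\beta=(\alpha\theta^{3})^{2^{k_v-1}-1}=(\alpha\theta^{3})^{15}$, so $a\alpha u^{2}\Delta^{2}=\alpha^{31}a\theta^{90}\Delta^{2}$, must be reduced in the Galois ring $\cO_{F_2}/4\cO_{F_2}$, not in $\bF_{32}$; also the denominator in the table is $a^{2}+\Delta$ as in Condition \ref{cond:Weil} --- the printed $a+\Delta$ is a typo.) Finally, since the lemma \emph{is} the table, the six evaluations still have to be carried out; your proposal sets up the correct framework but does not perform them, which is precisely the content the paper supplies through the notebook.
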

\begin{proof}
    For the numerical computations in the proof, see \texttt{Weilindex11} which is a Sagemath notebook \cite{Sag}.
	Denote $\zeta = \zeta_{11}$ and $\omega = \zeta + \zeta^{-1}$ in the proof.
Following Lemma \ref{lem:repofga},
we define
\begin{align*}
    a_1 &:= 2(\zeta^5 + \zeta^6) = -2\omega^4 - 2\omega^3 + 6\omega^2 + 4\omega - 2, \\
    a_2 &:= \frac{(1+\zeta^2+\zeta^{-2})(\zeta^6-\zeta^5)}{\zeta^2-\zeta^{-2}}
    = -\omega^3 + \omega^2 + 2\omega - 2, \\
    a_3 &:= \frac{(1+\zeta^3+\zeta^{-3})(\zeta^6-\zeta^5)}{\zeta^3-\zeta^{-3}}
    = -\omega^4 + 3\omega^2 - \omega + 1 ,\\
    a_4 &:= \frac{(1+\zeta^4+\zeta^{-4})(\zeta^6-\zeta^5)}{\zeta^4-\zeta^{-4}}
    =-\omega^3 + \omega^2 + 3\omega - 3, \\
    a_5 &:= \frac{(1+\zeta^5+\zeta^{-5})(\zeta^6-\zeta^5)}{\zeta^5-\zeta^{-5}}
    = \omega^4 + \omega^3 - 3\omega^2 - 2\omega.
\end{align*}
Then it maps to $g_{a_i}$ in the previous lemma.
Also, we have
	\begin{align*}
	&\lbrb{	\frac{\prod_{i=1}^5\lbrb{\frac{1+2\zeta^i}{1-2\zeta^{-i}}} + 1}{\prod_{i=1}^5\lbrb{\frac{1+2\zeta^i}{1-2\zeta^{-i}}} -1} } (\zeta^6 - \zeta^5)	
	= \frac{-1}{3917}\lbrb{251598\omega^4 + 184454\omega^3 - 1058498\omega^2 - 468084\omega + 893580}
	\end{align*}
which is denoted by $a_6$.
We note that $\Delta = (\zeta^6 - \zeta^5)^2 = \omega - 2$ and
\begin{align*}
    u = 2\beta=\left(\alpha(\zeta_{p}^{\frac{p-1}{2}}+\zeta_{p}^{-\frac{p-1}{2}})^{3}\right)^{2^{k_v-1}-1}
    = \left(\alpha(\zeta^{5}+\zeta^{-5})^{3}\right)^{2^{4}-1}
    = \alpha^{15}(\zeta^5 + \zeta^6)^{45}.
\end{align*}

Since $\alpha + 2\bZ_2$ gives same $\Tr_{F_2/\bQ_2}\lbrb{\frac{a\alpha u^2 \Delta^2}{a+\Delta}} \pmod{4}$, we can assume that $\alpha = 1$. 
Then, we can evaluate the traces by a numerical computation.
The computation of the Hilbert symbol also can be found in the Sagemath notebook.

On the other hand, we can check that $a_i^2 + \Delta$ is relatively prime to $2$ in $F$.
Hence, $m((a^2 + \Delta)\psi) = m(\psi)$.
It ensures the validity of the function in the Sagemath notebook.
For $a_6$, we use $\gamma_{F, 2}(3917^2(a_6^2 + \Delta),\psi)$, instead of $\gamma_{F, 2}(a_6^2 + \Delta, \psi)$.
\end{proof}

\begin{proof}[Proof of Theorem \ref{main:on11}]
Denote $\zeta := \zeta_{11}$ in the proof.
By (\ref{eqn:xi rep}), the Weil index condition (\ref{eqn:Weilcondat1}) is equivalent to 
\begin{align*}
    i^T (a\Delta,a^{2}+\Delta)_{F,2}\gamma_{F,2}(a^{2}+\Delta,\psi) = \chi_{2}(-\delta y(g-1))
\end{align*}
where $T = \Tr_{F_v/\bQ_2}\lbrb{\frac{a \alpha u^2 \Delta^2}{a^2 + \Delta}}$, since $(a\Delta,a^{2}+\Delta)_{F,2}\gamma_{F,2}(a^{2}+\Delta,\psi) \in \lcrc{\pm 1}$.
By the assumption, $\alpha$ is odd and 
\begin{align*}
	\chi_2(-\delta y (g-1)) = (-1)^{\ord_2(g-1)+\ord_2(y)}.
\end{align*}
Therefore for the generators $g_a$ listed in Proposition \ref{prop:str of X}, 
the Weil index condition (\ref{eqn:Weilcondat1}) is equivalent to 
\begin{align*}
\chi_2(-\delta y (g-1)) = \left\{
\begin{array}{llll}
-1 & \textrm{when } g = 1+2 \\
+1 & \textrm{when } g = \frac{1+2\zeta^{2}}{1+2\zeta^{-2}} \\
+1 & \textrm{when } g = \frac{1+2\zeta^{3}}{1+2\zeta^{-3}} 
\end{array}
\right.
\qquad
\chi_2(-\delta y (g-1)) = \left\{
\begin{array}{llll}
+1 & \textrm{when } g = \frac{1+2\zeta^{4}}{1+2\zeta^{-4}} \\
+1 & \textrm{when } g = \frac{1+2\zeta^{5}}{1+2\zeta^{-5}} \\
-1 & \textrm{when } g=  \prod_{i=1}^{5} \frac{1 + 2\zeta^i}{1 + 2\zeta^{-i}}
\end{array}
\right.
\end{align*}
by Lemma \ref{lem:11Gamma1}. 
Since $2$ is inert in $F/\bQ$, the valuation of an element in $K$ at $2$ agrees with that of in $F$. Hence, 
\begin{align*}
	\ord_2\lbrb{\frac{1+2\zeta^i}{1-2\zeta^{-i}} - 1}
	= \ord_2(1+2\zeta^i -(1-2\zeta^{-i})) = \ord_2(2(\zeta^i + \zeta^{-i})) = 1.
\end{align*}
By (\ref{eqn:xy for ga}), we have $y = \frac{2a}{a^2-\Delta}$ if $g = g_a = x + y \delta$.
So, we can compute
\begin{align*}
	\ord_2(y) = \left\{
	\begin{array}{lll}
	2 & g = 1+2,\,\,  \prod_{i=1}^{5} \frac{1 + 2\zeta^i}{1 + 2\zeta^{-i}}, \\
	1 & \textrm{otherwise.}
	\end{array}
	\right.
\end{align*}
One can check the calculation in \texttt{Weilindex11}. %_\emph{index}_\emph{at}_\emph{11}.
Hence we have verified (\ref{eqn:Weilcondat1}).
The Weil index condition at roots of unity can be checked by Remark \ref{rmk:Weilcond roots}.
% % For the condition of the Weil index at roots of unity  (\ref{eqn:Weilcondatzeta}), we note that $\ord_2(y_j) = -1$. Hence
% % $\chi_{2}(-\delta y_{j}(\zeta^{j}-1)) = -1$ and the condition is equivalent to 
% % \begin{align*}
% % \sum_{j=1}^{10}\gamma_{F, {2}}(\alpha y_{j}(1-x_{j})\psi)\psi\left(\frac{\Delta\alpha(x_{j}-1)}{y_{j}}\beta^{2}\right) \neq \pm 1.
% % \end{align*}
% It is directly followed by Lemma \ref{lem:Weilcondatroots11}.
\end{proof}

\section{Fermat curves and Hyperelliptic curves} \label{sec:Fermat}
\subsection{Hecke character attached to Fermat curves}
For a prime ideal $\fl$ of $\cO_K$ over a rational prime $\ell \nmid p$, 
we define $\lbrb{\frac{a}{\fl}}_p$ to be the $p$-th root of unity which is uniquely determined by the condition
\begin{align*}
    \lbrb{\frac{a}{\fl}}_p \equiv a^{\frac{\Nm_{K/\bQ}\fl-1}{p}} \pmod{\fl}.
\end{align*}
We also define a character
\begin{align*}
    \chi_{\fl}(a) := \lbrb{\frac{a}{\fl}}_p.
\end{align*}
For an element $d \in \bQ^\times/\bQ^{\times p}$, let $\chi_d$ be the character supported on the prime ideals of $\cO_K$ relatively prime to $dp$, defined to be
\begin{align*}
    \chi_d(\fl) = \pthsym{d}{\fl}.
\end{align*}
Let $\cF(p)$ be the $p$-th Fermat curve defined by an equation $x^p + y^p = 1$ and let $C_{r, s, t}$ be the curve defined by an equation $y^p = x^r(1-x)^s$, where $r, s, t$ are positive integers satisfying $r+s+t =p$.
Then the Jacobian of $\mathcal{F}(p)$ is isogeneous with the product of the Jacobian of $C_{1, s, p-s-1}$ for $s = 1, \cdots, p-2$.
Let $J_{r, s, t}$ be the Jacobian of $C_{r, s, t}$.
Then $J_{r, s, t}$ has complex multiplication and the associated Hecke character $\chi_{r, s, t}$ is determined by
\begin{align*}
	\chi_{r, s, t}(\fl) = - \frac{J(\chi_{\fl}^r, \chi_{\fl}^s)}{| J(\chi_{\fl}^r, \chi_{\fl}^s) |}
\end{align*}
whenever $\fl \nmid p$ (cf. \cite[\S 3]{GR78}, \cite[\S 2]{Wei52}).
Here $J(\chi_1, \chi_2)$ denotes the Jacobi sum 
\begin{align*}
    J(\chi_1, \chi_2) = - \sum_{\substack{x \in \cO_K/\fl \\ x \neq 0, 1}}\chi_1(x)\chi_2(1-x).
\end{align*}
For the reader's convenience, we note that our $\chi_{r,s,t}$ is $\chi_{r, s, t}'$ in \cite{GR78}.
%Let $\lara{n}$ be the unique integer in $[0,p)$ equivalent to $n$ modulo $p$ and 
%\begin{align*}
%    H_{r, s, t} := \lcrc{h \in (\bZ/p\bZ)^\times : 
%   \lara{hr} + \lara{hs} + \lara{ht} = p}.
%\end{align*}
%
%\begin{align*}
%    m = \sum_{h \in H_{r, s, t}} \lara{h^{-1}}.
%\end{align*}
%Then, $H_{r, s, t}$ may be identified with CM-type of $J_{r, s, t}$.
%For each $h \in H_{r, s, t},$ we denote $\sigma_h$ by the automorphism of $K$ over $\bQ$ which satisfies $\sigma_h(\zeta_p) = \zeta_p^h$.
For the basic properties of $\chi_{r, s, t}$, we refer \cite[\S 3]{GR78}.
Weil (cf. \cite[\S2]{Wei52}, \cite[\S 2.1]{Shu}) showed that
\begin{align*}
    \chi_{r, s, t}^{(d)} := \chi_d^{r+s}\chi_{r, s, t}
\end{align*}
is the Hecke character associated to $J^{(d)}_{r, s, t}$, the Jacobian of the curve $y^p = x^r(d-x)^s.$
%Note that the Jacobian of $x^p + y^p = d$ is isogenuous to some product of $J_{r, s, t}^{(d)}$ (cf. Introduction).
To state the properties of $\chi_{r, s, t}^{(d)}$, we introduce some notations.
Along the usual isomorphism $\mathbb{Q}_p^\times\cong p^\mathbb{Z}\times\mu_{p-1}\times(1+p\mathbb{Z}_p)$, where $\mu_{p-1}$ is the group of $(p-1)$-th roots of unity, each $x\in\mathbb{Q}_p^\times$ admits a unique decomposition:
\begin{align*}
    x=\epsilon p^a (1-p)^b,\quad\epsilon\in\mu_{p-1},\quad a\in\mathbb{Z},\quad b\in\mathbb{Z}_p
\end{align*}
Note that $(1-p)^b$ converges (cf. \cite[p.81-82]{Kob}) and $b$ is determined by taking the $p$-adic logarithm of $\epsilon^{-1}p^{-a}x$ (cf. \cite[p.78-81]{Kob}). 
We denote
\begin{align*}
    u(x) := \min(\ord_p(a), \ord_p(b)+1).
\end{align*}

\begin{lemma} \label{lem:chi rst d}
(i) The restriction of $\chi^{(d)}_{r, s, t}$ on $\bA_F^\times$ is the quadratic character $\epsilon_{K/F}$ associated to $K/F$.\\
(ii) If $w$ is infinite, then $\chi_{r, s, t;w}^{(d)}(z) = |z|/z$ and the root number is $i^{-1}$.\\
(iii) If $w$ does not divide $pd$, then $\chi_{r, s, t}^{(d)}$ is unramified at $w$ and
\begin{align*}
    \chi_{r, s, t}^{(d)}(\fl) = \pthsym{d}{\fl}^{r+s} 
    \frac{J(\chi_\fl^r, \chi_\fl^s)}
    {|J(\chi_\fl^r, \chi_\fl^s) |}
\end{align*}
for prime ideals $\fl \nmid p$. 
The conductor exponent of $\chi^{(d)}_{r, s, t}$ at $w$ is zero.%, and the root number is $+1$.
\\
(iv) If $w$ divides $p$ and $u(r^rs^s(t-p)^td^{r+s}) \geq 2$, then the conductor exponent of $\chi^{(d)}_{r, s, t}$ at $w$ is $1$.% and the root number is $\quadsym{2}{p}i^{\frac{p-1}{2}}$.
\end{lemma}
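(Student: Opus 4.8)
The plan is to handle the four parts in turn: parts (i)--(iii) reduce quickly to the properties of the untwisted character $\chi_{r,s,t}$ recorded in \cite{GR78} together with the elementary behaviour of the finite-order twist $\chi_d^{r+s}$, while the real work is the local computation at $p$ in part (iv).

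For (i), I would use that $C_{r,s,t}^{(d)}$ is defined over $\bQ$, so $J_{r,s,t}^{(d)}$ is a CM abelian variety over $F$ with multiplication by $K$ and $[K:F]=2$, and $\chi_{r,s,t}^{(d)}$ is its attached Hecke character of $K$. Under class field theory the inclusion $\bA_F^\times\hookrightarrow\bA_K^\times$ is the transfer map, so $\det\bigl(\operatorname{Ind}_{G_K}^{G_F}\chi_{r,s,t}^{(d)}\bigr)=\epsilon_{K/F}\cdot\bigl(\chi_{r,s,t}^{(d)}|_{\bA_F^\times}\bigr)$. The induced representation is the $\lambda$-adic realisation of $J_{r,s,t}^{(d)}$, whose determinant is, up to finite order, the cyclotomic character; after the unitary normalisation built into $\chi_{r,s,t}(\fl)=-J(\chi_\fl^r,\chi_\fl^s)/|J(\chi_\fl^r,\chi_\fl^s)|$ (legitimate because $|J(\chi_\fl^r,\chi_\fl^s)|^2=\Nm_{K/\bQ}\fl$ by $r+s+t\equiv 0 \pmod p$) this forces $\chi_{r,s,t}^{(d)}|_{\bA_F^\times}=\epsilon_{K/F}$, the precise normalisation being the one recorded in \cite{GR78}. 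As a sanity check, at a split prime $v=\fl\bar\fl$ the restriction value is $\chi(\fl)\chi(\bar\fl)=1=\epsilon_{K/F}(v)$.

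For (ii) and (iii), the key point is that $\chi_d$ takes values in $\mu_p$, hence is of finite order, so $\chi_{r,s,t}^{(d)}$ and $\chi_{r,s,t}$ share an archimedean component; the CM type of $J_{r,s,t}$ computed in \cite{GR78} gives $\chi_{r,s,t;w}^{(d)}(z)=|z|/z=(z/|z|)^{-1}$ at each complex place $w$, and the archimedean root number of this character of $\bC^\times$ is its standard $\GL_1(\bC)$ local $\varepsilon$-factor, which for the conventions fixed by $\psi$ equals $i^{-1}$. For (iii), \cite{GR78} shows the conductor of $\chi_{r,s,t}$ is supported at $\fP$, while $\chi_d$ is built from the $p$-th power residue symbol $(d/\cdot)_p$ and so is ramified only above $pd$; hence $\chi_{r,s,t}^{(d)}$ is unramified, of conductor exponent $0$, at every $w\nmid pd$, and the value formula is read off from the definitions of $\chi_d$ and $\chi_{r,s,t}$.

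The main obstacle is (iv), the computation at $\fP\mid p$. Since $K_\fP=\bQ_p(\zeta_p)$ is totally ramified of degree $p-1$ with residue field $\bF_p$, conductor exponent $1$ means that $\chi_{r,s,t;\fP}^{(d)}$ is nontrivial on $\cO_{K_\fP}^\times$ but trivial on the principal units $1+\fP\cO_{K_\fP}$; nontriviality on the tame quotient $\cO_{K_\fP}^\times/(1+\fP\cO_{K_\fP})\cong\bF_p^\times$ is the easy half. For the hard half I would make the local character explicit: the Jacobi-sum factor of $\chi_{r,s,t;\fP}^{(d)}$ is expressed through Stickelberger's theorem and the Gross--Koblitz formula, writing the relevant Gauss sums as products of the $p$-adic Gamma values $\Gamma_p(r/(p-1))$, $\Gamma_p(s/(p-1))$, $\Gamma_p(t/(p-1))$, while the twist $\chi_{d;\fP}^{r+s}$ is the $p$-th power Hilbert symbol $(d,\cdot)_\fP^{r+s}$. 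Evaluating the product on $1+\fP\cO_{K_\fP}$ by the explicit (Artin--Hasse--Iwasawa) reciprocity law and isolating the leading term, the obstruction to triviality is governed precisely by the element $r^r s^s(t-p)^t d^{r+s}$, where $t-p=-(r+s)$ by $r+s+t=p$; the hypothesis $u\bigl(r^r s^s(t-p)^t d^{r+s}\bigr)\geq 2$ is exactly the assertion that this leading term dies on $1+\fP\cO_{K_\fP}$, dropping the conductor exponent to $1$. I expect the bookkeeping in the Gross--Koblitz expansion---tracking how the $p$-adic Gamma values and the Hilbert-symbol contribution collapse into the single quantity $r^r s^s(t-p)^t d^{r+s}$ and matching its $u$-invariant against the unit filtration $1+\fP^k\cO_{K_\fP}$---to be the most delicate and error-prone step.
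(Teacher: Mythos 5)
The paper does not prove this lemma by computation at all: part (i) is quoted from Rohrlich's result that Jacobi-sum Hecke characters restrict to $\epsilon_{K/F}$ \cite[Proposition 5]{Roh92} together with Shu's observation that $\chi_d$ is trivial on $\bA_F^\times$ \cite[Lemma 3.2]{Shu}, and parts (ii)--(iv) are quoted from \cite[Lemma 3.10, Proposition 3.11, Proposition 3.12]{Shu}. Your proposal instead sketches a from-scratch proof, which is a legitimately different route, and for (ii)--(iii) your outline is sound: finiteness of the order of $\chi_d$, the CM type at infinity, and the ramification supports of the two factors do give those parts. In (i), however, your determinant argument has a real loose end: $\det\bigl(\operatorname{Ind}\chi\bigr)$ being cyclotomic up to finite order pins down $\chi^{(d)}_{r,s,t}|_{\bA_F^\times}$ only up to a finite-order character, and your split-prime sanity check cannot remove that ambiguity (any character trivial on norms passes it). The clean repair is the conjugation symmetry: for $x\in\bA_F^\times$ one has $\chi_d(x)=\chi_d(\bar{x})=\chi_d(x)^{-1}$ with values in $\mu_p$, $p$ odd, forcing $\chi_d|_{\bA_F^\times}=1$, and then Rohrlich's result for the Jacobi-sum factor --- which is essentially what the paper cites.

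The genuine gap is in (iv), which is the only substantive claim of the lemma. You correctly identify the strategy (Stickelberger/Gross--Koblitz for the Jacobi-sum factor, the $p$-th Hilbert symbol for the twist, explicit reciprocity on $1+\fP\cO_{K_\fP}$), but you never carry out the step on which everything rests: showing that the obstruction to triviality on $1+\fP\cO_{K_\fP}$ is \emph{exactly} the single unit $r^rs^s(t-p)^td^{r+s}$, and that the condition $u\bigl(r^rs^s(t-p)^td^{r+s}\bigr)\geq 2$ is equivalent to the vanishing of that obstruction modulo the right step of the unit filtration. Writing ``I expect the bookkeeping \dots\ to be the most delicate and error-prone step'' concedes that this equivalence --- which is precisely the content of \cite[Proposition 3.12]{Shu} --- is asserted rather than proved; in particular nothing in your sketch verifies the normalization matching the invariant $u(x)=\min(\ord_p(a),\ord_p(b)+1)$ (defined via $x=\epsilon p^a(1-p)^b$) against the filtration $1+\fP^k\cO_{K_\fP}$, nor why the threshold is $2$ rather than some other level. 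As it stands the proposal is a correct plan for (i)--(iii) with a fixable flaw in (i), but (iv) remains unproved; to complete it you would either have to execute the Gross--Koblitz/explicit-reciprocity computation in detail or, as the paper does, invoke Shu's Proposition 3.12 directly.
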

\begin{proof}
By \cite[Proposition 5]{Roh92}, we know that the Jacobi sum Hecke characters $\chi_{r, s, t}$ agree with $\epsilon_{K/F}$ and by \cite[Lemma 3.2]{Shu}, $\chi_d$ is trivial on $\bA_{F}^\times$ so (i) follows. The other statements come from \cite[Lemma 3.10, Proposition 3.11, Proposition 3.12]{Shu}.
\end{proof}

\begin{corollary} \label{cor:Fermatalpha}
For the Hecke character $\chi_{r, s, t}^{(d)}$, we may choose $\alpha$ to be positive.
\end{corollary}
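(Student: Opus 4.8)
The plan is to reduce the corollary to the single assertion that $\epsilon_{K/F,v}(\alpha)=+1$ at every archimedean place $v$ of $F$, which is precisely the condition that $\alpha$ admits a totally positive representative. First I would record the structural reduction. For a real place $v$ with complex $w\mid v$ the local norm is $\Nm_{K_w/F_v}(z)=|z|^{2}>0$, so for every $z\in K^\times$ the global norm satisfies $\sigma_v(\Norm_{K/F}(z))=|\sigma_w(z)|^{2}>0$; thus $\Norm_{K/F}K^\times$ consists of totally positive elements, and the sign vector $\left(\operatorname{sgn}\sigma_v(\alpha)\right)_v$ depends only on the class of $\alpha$ in $F^\times/\Norm_{K/F}K^\times$. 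Consequently the class singled out by (\ref{eqn:alpha}) has a totally positive representative if and only if $\epsilon_{K/F,v}(\alpha)=+1$ for every real $v$, and it is exactly this equality I must establish.

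Next I would evaluate the archimedean side of (\ref{eqn:alpha}) using Lemma \ref{lem:chi rst d}(ii). At each infinite place the infinity type is the uniform character $\chi^{(d)}_{r,s,t;w}(z)=|z|_w/z$ and the local root number equals $i^{-1}$, so the archimedean factor $\epsilon\!\left(\tfrac12,\chi_w,\tfrac12\psi_w\right)$ is the same constant at all $v$. The only place-dependence sits in $\chi_w(\delta)$. Since $\overline{\delta}=-\delta$, each $\sigma_w(\delta)$ is purely imaginary; moreover $\chi_w|_{F_v^\times}=\epsilon_{K/F,v}$ forces $\chi_w(\delta)^2=\chi_w(\Delta)=\epsilon_{K/F,v}(\Delta)=-1$, because $\sigma_v(\Delta)=\sigma_w(\delta)^2<0$. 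Hence $\chi_w(\delta)\in\{\pm i\}$. I would then fix, once and for all, the identification $K_w\cong\bC$ coming from a single CM type chosen so that $\operatorname{Im}\sigma_w(\delta)>0$ for every $w$; with this uniform choice $\chi_w(\delta)=\mp i$ takes the same value at all infinite places, so the product $\epsilon\!\left(\tfrac12,\chi_w,\tfrac12\psi_w\right)\chi_w(\delta)$ is one sign, independent of $v$. This uniformity across the $(p-1)/2$ real places is what makes total positivity possible in the first place.

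It then remains to pin down that this common sign is $+1$ and not $-1$. The plan here is to match the normalization used in Proposition \ref{prop:Ivaboveinf} (Yang's Lemma 1.1 together with the conventions of \cite[Lemma 3.2]{SY03}) for the standard additive character of $\bR$ and the factor $\tfrac12$, and to combine it with the root-number value $i^{-1}$ and the chosen value of $\chi_w(\delta)$. The hard part is precisely this archimedean phase bookkeeping: one must be careful whether $(\chi)_v$ in (\ref{eqn:alpha}) is read as the $\GL_2(F_v)$ representation produced by automorphic induction from $\chi_w$, which inserts a Langlands constant $\lambda(K_w/F_v,\psi_v)$ into $\epsilon\!\left(\tfrac12,\chi_w,\tfrac12\psi_w\right)$, so that the phases from the root number and from $\chi_w(\delta)$ cancel to give exactly $+1$ rather than $-1$. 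Once this is verified at one archimedean place—and hence, by the uniformity established above, at every one—the sign vector of $\alpha$ is trivial, and the reduction of the first paragraph furnishes a totally positive representative, which is the assertion of the corollary.
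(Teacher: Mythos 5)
Your first paragraph is a correct and clean reduction (signs are constant on classes modulo $\Norm_{K/F}K^\times$, so total positivity of the class is exactly $\epsilon_{K/F,v}(\alpha)=+1$ at every real $v$), and this matches what the paper does implicitly. But from there the proposal has a genuine gap: the decisive sign computation is never performed. You end by saying the ``hard part'' is the archimedean phase bookkeeping --- whether $(\chi)_v$ in (\ref{eqn:alpha}) carries a Langlands constant $\lambda(K_w/F_v,\psi_v)$, whether the phases cancel --- and then assert ``once this is verified'' the corollary follows. That verification \emph{is} the content of the corollary. The paper resolves it concretely: (\ref{eqn:alpha}) is read with the $K_w$-epsilon factor of the character $\chi_w$ itself (no automorphic induction, no $\lambda$-factor), the discrepancy between the normalizations $\tfrac12\psi_w$ (as in \cite{SY03}) and $\psi_w$ (as in \cite{GR78,Shu}) is absorbed by the factor $\chi_w(1/2)/|\chi_w(1/2)|$, which equals $1$ since $\chi_w(z)=|z|_w/z$, and then Lemma \ref{lem:chi rst d}(ii) gives $\epsilon\bigl(\tfrac12,\chi^{(d)}_{r,s,t;w},\psi_w\bigr)=i^{-1}$ while $\chi^{(d)}_{r,s,t;w}(\delta)=i$, so the left side of (\ref{eqn:alpha}) is $i^{-1}\cdot i=+1$ at every real $v$. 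None of this appears in your write-up.

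Your second paragraph also contains a step that would fail as written. The identification $K_w\cong\bC$ is not free: it is already pinned down by the requirement that the infinity type read $\chi_w(z)=|z|_w/z$ (Lemma \ref{lem:chi rst d}(ii)), and the stated root number $i^{-1}$ is computed in that same normalization. You cannot additionally impose $\operatorname{Im}\sigma_w(\delta)>0$; and note that if you could, you would get $\chi_w(\delta)=-i$, whence $\epsilon\cdot\chi_w(\delta)=(-i)(-i)=-1$, the \emph{opposite} of what the corollary needs --- under the conjugate identification the root number flips to $i$ as well, precisely because the product in (\ref{eqn:alpha}) is intrinsic. So the normalization of $\delta$ and of the root number must be tracked jointly, which is exactly the bookkeeping the paper's two-line computation carries out (with the fixed embedding $\zeta_p\mapsto e^{2\pi i/p}$, one has $\sigma(\delta)=-2i\sin(\pi/p)$, giving $\chi_w(\delta)=+i$). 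In short: right reduction, but the uniform-CM-type maneuver is illegitimate, the Langlands-constant worry is a red herring in this paper's conventions, and the sign $+1$ is asserted rather than proved.
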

\begin{proof}
We note that the local root number is $\epsilon\lbrb{\frac{1}{2}, \chi_w, \frac{1}{2}\psi_w }$  in \cite{SY03} but is $\epsilon\lbrb{\frac{1}{2}, \chi_w, \psi_w }$ in \cite{GR78, Shu}.
The difference can be measured by the formula
\begin{align*}
	\epsilon\left(\frac{1}{2}, \chi_w, \frac{1}{2}\psi_{w}\right) = \frac{\chi_w(1/2)}{|\chi_w(1/2)|}\epsilon\left(\frac{1}{2}, \chi_w, \psi_{w}\right).
\end{align*}
By Lemma \ref{lem:chi rst d} (ii),
\begin{align*}
\frac{\chi_{r, s, t;w}^{(d)}(\frac{1}{2})}{|\chi_{r, s, t;w}^{(d)}(\frac{1}{2})|}=1, \qquad
\epsilon\left(\frac{1}{2}, \chi_{r, s, t;w}^{(d)}, \psi_w \right) = i^{-1}, \qquad
\chi_{r, s, t;w}^{(d)}(\delta) = i
\end{align*}
at each infinite place. Hence, the left hand side of 
\begin{align*}
\prod_{w \mid v} \epsilon\left(\frac{1}{2}, \chi_{r, s, t;w}^{(d)}, \frac{1}{2}\psi_w \right) \chi_{r, s, t;w}^{(d)}( \delta )  = \epsilon_{K/F; v}(\alpha)
\end{align*}
is $+1$, and it means that $\sigma_v(\alpha)$ is positive.
\end{proof}

\begin{lemma} \label{lem:globalShu}
	Let $d$ be a squarefree product of a rational prime $\ell \neq p$ such that
 \begin{align*}
     u(r^rs^s(t-p)^td^{r+s}) \geq 2.
 \end{align*}
 Then, the global root number of $\chi_{r, s, t}^{(d)}$ is
	\begin{align*}
		\quadsym{2}{p}\cdot \prod_{\ell \mid d} \quadsym{\ell}{p}.
	\end{align*}
\end{lemma}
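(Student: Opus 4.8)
The plan is to evaluate the global root number as the product of local root numbers $W(\chi_{r,s,t}^{(d)}) = \prod_w \epsilon_w$ over all places $w$ of $K$, organizing the places exactly as in Lemma \ref{lem:chi rst d}: the archimedean places, the finite places prime to $pd$, the unique place above $p$, and the places above the primes dividing $d$. Since $\chi_{r,s,t}^{(d)}$ is attached to a factor of the $H^1$ of the abelian variety $J_{r,s,t}^{(d)}$, it is essentially self-dual, so $W(\chi_{r,s,t}^{(d)}) \in \{\pm 1\}$; I would use this as a consistency check, as it forces every archimedean phase to cancel against the ramified finite places. First I would dispose of the two easy groups. By Lemma \ref{lem:chi rst d}(iii) each finite $w \nmid pd$ is unramified and contributes $1$. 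By Lemma \ref{lem:chi rst d}(ii) each of the $\frac{p-1}{2}$ complex places contributes $i^{-1}$, giving an archimedean factor $i^{-(p-1)/2}$, which is non-real when $p \equiv 3 \pmod 4$; the self-duality just noted guarantees this phase is absorbed by the ramified finite places.

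Next comes the heart of the matter, the place $\fp$ above $p$. Because the $p$-condition (Lemma \ref{lem:chi rst d}(iv)) forces conductor exponent $1$ there, and $p$ is \emph{tamely} ramified in $K/\bQ$, the local root number is a normalized Gauss sum of the tame character obtained by restricting $\chi_{r,s,t}^{(d)}$ to the residue field $\bF_p$, whose restriction to $F_{\fp}$ is the quadratic character $\epsilon_{K/F}$. I would invoke Rohrlich's evaluation of the root numbers of Jacobi-sum Hecke characters in \cite{Roh92} to pin this factor down: the classical sign of the quadratic Gauss sum (equal to $1$ or $i$ according to $p \bmod 4$) is precisely what absorbs the archimedean $i^{-(p-1)/2}$, and the residual real contribution is $\quadsym{2}{p}$. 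The $2$ enters through the value of $\chi$ at $2$, which must be tracked carefully because of the discrepancy between the additive characters $\psi$ and $\frac{1}{2}\psi$ recorded in Corollary \ref{cor:Fermatalpha}.

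Finally I would compute the twist contribution at the primes $\fl$ above a prime $\ell \mid d$. There $\chi_{r,s,t}$ is unramified, so all ramification comes from $\chi_d^{r+s}$, and the local root number is that of the localized power-residue character. Applying $p$-th power residue reciprocity to rewrite $\pthsym{d}{\cdot}$ and evaluating the ensuing Gauss sum, each $\ell \mid d$ yields the Legendre symbol $\quadsym{\ell}{p}$; since $d$ is squarefree these multiply to $\prod_{\ell \mid d}\quadsym{\ell}{p}$. Assembling the four groups and tidying up with quadratic reciprocity then gives $W(\chi_{r,s,t}^{(d)}) = \quadsym{2}{p}\prod_{\ell \mid d}\quadsym{\ell}{p}$.

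I expect the main obstacle to be the ramified place $\fp$ above $p$: one must fix the exact normalization of the local additive character, evaluate the quadratic Gauss sum with the correct Gauss sign, and confirm that its phase cancels the archimedean $i^{-(p-1)/2}$ to leave the single real factor $\quadsym{2}{p}$. By contrast, the contributions at the $\fl \mid d$, though requiring attention to the reciprocity correction terms at $p$, are comparatively routine once the Gauss sum of a power-residue character is identified with a Legendre symbol.
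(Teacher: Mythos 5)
You should first note what the paper actually does here: its ``proof'' of Lemma \ref{lem:globalShu} is a one-line citation to \cite[Theorem 3.13]{Shu}. So your proposal is not an alternative to the paper's argument so much as an attempt to reconstruct the proof of the cited result, and your local--global outline (archimedean places, unramified places prime to $pd$, the place $\fp$ above $p$, the places above $\ell \mid d$) is indeed the standard strategy that Shu and, for the untwisted $\chi_{r,s,t}$, Rohrlich \cite{Roh92} follow. As a roadmap it is sound; as a proof it has concrete gaps at exactly the two places you flag as hard. At a place $w$ of $K$ above $\ell \mid d$, it is not true that ``the local root number is that of the localized power-residue character'': since $\chi_{r,s,t}$ is unramified and $\chi_d^{r+s}$ is ramified at $w$, the twisting formula $\epsilon(\mu\chi,\psi_w)=\mu(\varpi_w)^{a(\chi)+n(\psi_w)}\epsilon(\chi,\psi_w)$ for unramified $\mu$ forces the value $\chi_{r,s,t,w}(\varpi_w)$ --- a normalized Jacobi sum --- into each local factor. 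One must then show that the product over all $(p-1)/f$ places of $K$ above $\ell$ (where $f$ is the order of $\ell$ modulo $p$) of these Jacobi-sum values times Gauss sums of order-$p$ characters of $\bF_{\ell^f}^\times$ collapses to the single Legendre symbol $\quadsym{\ell}{p}$. That step is the substantive content of Shu's computation (Stickelberger/Hasse--Davenport-type identities) and is not ``comparatively routine''; your sketch omits the Jacobi-sum contribution entirely.

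At $\fp$ there is a similar issue: Rohrlich's evaluation applies to the Jacobi-sum characters $\chi_{r,s,t}$ themselves, whereas here the tame character at $\fP$ is modified by the tame component of $\chi_{d,\fP}^{r+s}$; the hypothesis $u(r^rs^s(t-p)^td^{r+s})\geq 2$ is precisely what guarantees conductor exponent $1$ and that the $\fp$-contribution is the same as in the untwisted case, and a complete proof must verify this rather than cite \cite{Roh92} as a black box. Two smaller corrections: the global root number of a unitary Hecke character is independent of the choice of additive character, so the $\psi$ versus $\frac{1}{2}\psi$ discrepancy recorded in Corollary \ref{cor:Fermatalpha} is irrelevant to this lemma (it matters only for identifying $\alpha$ in Theorem \ref{thm:Yang97main}); and self-duality giving $W\in\{\pm 1\}$ is a fine consistency check but does no work in pinning down the sign. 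In short: your decomposition matches the actual proof in the literature, but the two decisive Gauss/Jacobi-sum evaluations are asserted rather than proved, so as written this is a sketch of \cite[Theorem 3.13]{Shu}, which is what the paper simply cites.
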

\begin{proof}
\cite[Theorem 3.13]{Shu}.
\end{proof}

\begin{theorem} \label{thm:mainFermat}
	Let $p$ be a prime satisfying $\quadsym{2}{p} = +1$ and let $r, s, t$ be positive integers satisfying $r+s+t=p$.
	Then, there are infinitely many squarefree integers $d$ satisfying $L\lbrb{1, \chi_{r, s, t}^{(d)}} \neq 0$.
\end{theorem}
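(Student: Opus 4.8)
The plan is to produce, for suitable $d$, a Hecke character $\chi = \chi_{r,s,t}^{(d)}$ satisfying every hypothesis of Theorem \ref{main:main}, and then quote that theorem. Several hypotheses hold automatically for these Jacobi-sum characters: the restriction of $\chi_{r,s,t}^{(d)}$ to $\bA_F^\times$ is $\epsilon_{K/F}$ by Lemma \ref{lem:chi rst d}(i); the infinite-place condition $\chi_w(z)=|z|_w/z$ is Lemma \ref{lem:chi rst d}(ii), and the total positivity of $\alpha$ is Corollary \ref{cor:Fermatalpha}. Moreover, since $\quadsym{2}{p}=+1$ the places of $F$ above $2$ split in $K/F$, so the Weil index condition never arises. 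Thus it remains to choose $d$ so as to secure (a) root number $+1$, (b) the splitting condition at the finite ramified primes away from $\fp$, and (c) the $p$-condition that the conductor exponent at $\fp$ be $1$.

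First I would take $d=\ell$ to be a single rational prime $\ell\neq p$, which is automatically squarefree. By Lemma \ref{lem:globalShu}, once $u\bigl(r^r s^s (t-p)^t \ell^{r+s}\bigr)\geq 2$ the global root number equals $\quadsym{2}{p}\quadsym{\ell}{p}=\quadsym{\ell}{p}$, so (a) holds provided $\quadsym{\ell}{p}=+1$. For (b), Lemma \ref{lem:chi rst d}(iii) shows the ramified primes of $\chi$ away from $\fp$ lie above $\ell$, and these split in $K/F$ as soon as $\ell$ splits completely in $K$, i.e. $\ell\equiv 1\pmod p$; note that $\ell\equiv 1\pmod p$ simultaneously forces $\quadsym{\ell}{p}=+1$, so it secures (a) as well. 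Finally, by Lemma \ref{lem:chi rst d}(iv), condition (c) is exactly $u\bigl(r^r s^s (t-p)^t \ell^{r+s}\bigr)\geq 2$.

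The technical heart is therefore to produce infinitely many primes $\ell\equiv 1\pmod p$ meeting the $p$-adic condition (c). I would first reformulate (c) as a congruence modulo $p^2$. Writing $x_0 = r^r s^s (t-p)^t$, which is a $p$-adic unit because $r$, $s$, and $t-p$ all lie between $1$ and $p-1$ in absolute value, one checks directly from the definition of $u$ and the fact that $\log_p(1-p)$ has valuation $1$ that $u(x_0\ell^{r+s})\geq 2$ is equivalent to $(x_0\ell^{r+s})^{p-1}\equiv 1\pmod{p^2}$, i.e. to $x_0\ell^{r+s}$ lying in the group $\mu_{p-1}$ of $(p-1)$-th roots of unity inside $(\bZ/p^2)^\times$. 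I then seek a residue $c\in(\bZ/p^2)^\times$ with $c\equiv 1\pmod p$ and $x_0 c^{r+s}\equiv(\text{Teichm\"uller lift of }x_0\bmod p)\pmod{p^2}$. As $c$ runs over the $p$ lifts $1+kp$ of $1$, one computes $c^{r+s}\equiv 1+(r+s)kp\pmod{p^2}$, so $c^{r+s}\bmod p^2$ sweeps out every lift of $1$ precisely because $r+s=p-t$ is prime to $p$ (here $1\leq t\leq p-2$). Hence the desired value of $x_0 c^{r+s}$ is attained by some $c$, and Dirichlet's theorem supplies infinitely many primes $\ell\equiv c\pmod{p^2}$, each yielding a valid $d=\ell$; distinct such $\ell$ give distinct classes in $\bQ^\times/\bQ^{\times p}$, so infinitely many $d$ result.

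The main obstacle to anticipate is the compatibility of the constraints: conditions (a) and (b) are congruences modulo $p$, whereas the $p$-condition (c) is genuinely a congruence modulo $p^2$, and a priori these could conflict. The structural fact that resolves this is $p\nmid(r+s)$, which makes the $(r+s)$-th power map a bijection on the level-$p$ subgroup $1+p\bZ/p^2\bZ$ of $(\bZ/p^2)^\times$; this is exactly what permits the mod-$p^2$ condition to be fulfilled after the mod-$p$ class of $\ell$ has already been pinned to $1$. With the residue class $c\bmod p^2$ in hand, the remaining checks are routine, and Theorem \ref{main:main} yields $L\bigl(1,\chi_{r,s,t}^{(\ell)}\bigr)\neq 0$ for the infinitely many resulting $d=\ell$.
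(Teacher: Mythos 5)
Your proposal is correct and follows essentially the same route as the paper's proof: both reduce to Theorem \ref{main:main} via Lemma \ref{lem:chi rst d}, Corollary \ref{cor:Fermatalpha} and Lemma \ref{lem:globalShu}, recast the $p$-condition as the congruence $\left(r^rs^s(t-p)^td^{r+s}\right)^{p-1}\equiv 1\pmod{p^2}$, solve it within a prescribed residue class mod $p$ using exactly the key fact $p\nmid(r+s)$, and conclude with Dirichlet's theorem on primes in arithmetic progressions. The only (harmless) divergence is that you pin $\ell\equiv 1\pmod{p}$ to force complete splitting, whereas the paper works with a quadratic-residue class $d_0\bmod p$, a slightly more flexible choice it later exploits for the simultaneous nonvanishing in Corollary \ref{cor:simul}.
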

\begin{proof}
We first note that the restriction of $\chi_{r, s, t}^{(d)}$ to $\bA_F^\times$ is a quadratic character by Lemma \ref{lem:chi rst d} (i),
and the infinite part of $\chi_{r, s, t}^{(d)}$ is $|z|/z$ by Lemma \ref{lem:chi rst d} (ii).
Hence the character satisfies the infinite place condition of Theorem \ref{main:main}.
Let $d$ be a squarefree integer such that 
	\begin{enumerate}
		\item $2 \nmid d$,
		\item $u(r^rs^s(t-p)^td^{r+s}) \geq 2$, and
		\item if $\ell \mid d$, then all primes of $F$ dividing $\ell$ split in $K/F$.
	\end{enumerate}
	We note that (3) implies the split condition of Theorem \ref{main:main}, and (2) implies $p$-condition of Theorem \ref{main:main} by Lemma \ref{lem:chi rst d} (iv).
	By Lemma \ref{lem:globalShu} and splitting condition, the root number of $\chi_{r, s, t}^{(d)}$ is $+1$. Hence $L$-value does not vanish by Theorem \ref{main:main}.
	
	It suffices to show that there are infinitely many $d$ satisfying the conditions (1)-(3). For each $r, s, t$ such that $1 \leq r, s, t \leq p-2$ and $r + s + t = p$, there exists $x \in \bZ/p^2\bZ$ satisfying $r^rs^s(t-p)^t \equiv x \pmod{p^2}$. 
	Since $r, s, (t-p)$ are relatively prime to $p$, $x$ is a unit in $\bZ/p^2\bZ$. Therefore, there exists $d_0 \in (\bZ/p^2\bZ)^\times$	such that
	\begin{align} \label{eqn:d0condi}
		(d_0^{(r+s)(p-1)})^{-1} \equiv x^{p-1}
	\end{align}
	and $d_0$ is determined by the value at $\bZ/p\bZ$-copy in $(\bZ/p^2\bZ)^\times \cong \bZ/p\bZ \oplus \bZ/(p-1)\bZ$.
 Also, there is a unique $d_0$ which is equivalent to a given element of $(\bZ/p\bZ)^\times \cong \bZ/(p-1)\bZ$ modulo $p$ and satisfying (\ref{eqn:d0condi}).	
	Choose $d_0 \in (\bZ/p^2\bZ)^\times$ which is a quadratic residue modulo $p$ and satisfying (\ref{eqn:d0condi}). 
	Let $d$ be a prime which is equivalent to $d_0$ modulo $p^2$.
	Then it splits in $K/F$ since it is quadratic residue modulo $p$, and $(r^rs^s(t-p)^td^{r+s})^{p-1} \equiv 1 \pmod{p^2}$.
	There are infinitely many such $d$.\footnote{It suffices to prove this theorem, but it is easy to construct composite $d$, by taking the product of such prime and primes equivalent to $1$ modulo $p^2$.}
		
Since
	\begin{align*}
		u(x) = \min(\ord_p(a), \ord_p(b)+1)
	\end{align*}
	where $x = \epsilon p^a(1-p)^b$ for $\epsilon \in \mu_{p-1}$, we have
	\begin{align*}
		u(r^r s^s (t-p)^t d^{r+s}) &= \ord_p\lbrb{\frac{(r^r s^s (t-p)^t d^{r+s})^{p-1}-1}{p}} +1 \geq 2.
	\end{align*}
	Hence such $d$ satisfies (1)-(3).
\end{proof}

\begin{proof}[Proof of Theorem \ref{main:Fermat}]
It is a direct consequence of Theorem \ref{thm:mainFermat}.
\end{proof}

The proof of Theorem \ref{thm:mainFermat} also gives the following simultaneous nonvanishing theorem.

\begin{corollary} \label{cor:simul}
	Let $p$ be a prime satisfying $\quadsym{2}{p} = +1$. Then, there are infinitely many $d$ such that
	$L(1, \chi_{r, s, t}^{(d)}) \neq 0$ for all $r, s, t$ satisfying $r+s+t = p$ and 
	$(r^rs^s(t-p)^t)^{p-1} \equiv a \pmod{p^2}$ for an $a \in (\bZ/p^2\bZ)^\times$.
\end{corollary}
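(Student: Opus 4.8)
The plan is to reuse the mechanism of the proof of Theorem \ref{thm:mainFermat} essentially verbatim, the only new input being that a \emph{single} residue class $d_0 \bmod p^2$ can be made to work for every triple in the family at once. Recall that for a fixed prime $d$ the character $\chi_{r,s,t}^{(d)}$ satisfies the hypotheses of Theorem \ref{main:main} as soon as (1) $2 \nmid d$, (2) $u(r^r s^s (t-p)^t d^{r+s}) \ge 2$, and (3) every prime of $F$ above $d$ splits in $K/F$; the infinite-place condition and the $+1$ root number hold automatically by Lemma \ref{lem:chi rst d} and Lemma \ref{lem:globalShu} together with $\quadsym{2}{p} = +1$. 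Among these, only condition (2) couples the triple $(r,s,t)$ to $d$, so the whole problem reduces to arranging (2) simultaneously for all admissible triples with one choice of $d$.

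Fix $a \in (\bZ/p^2\bZ)^\times$ and let $\mathcal{T}$ be the set of triples with $r+s+t = p$ and $(r^r s^s (t-p)^t)^{p-1} \equiv a \pmod{p^2}$. As in the proof of Theorem \ref{thm:mainFermat}, condition (2) is equivalent to
\[
\bigl(r^r s^s (t-p)^t\bigr)^{p-1}\, d^{(r+s)(p-1)} \equiv 1 \pmod{p^2},
\]
which on $\mathcal{T}$ reads $d^{(r+s)(p-1)} \equiv a^{-1} \pmod{p^2}$. Using the splitting $(\bZ/p^2\bZ)^\times \cong \bZ/(p-1)\bZ \oplus \bZ/p\bZ$, raising to the exponent $(r+s)(p-1)$ kills the prime-to-$p$ part and lands in the $p$-Sylow, so this is a congruence purely on the $p$-part of $d$. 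I would then solve this congruence for the $p$-part of a representative $d_0 \in (\bZ/p^2\bZ)^\times$, choose the remaining freedom (the class $d_0 \bmod p$) to be a quadratic residue so that (3) holds and to be odd so that (1) holds, and finally invoke Dirichlet's theorem to produce infinitely many primes $d \equiv d_0 \pmod{p^2}$; each such $d$ then works for every member of $\mathcal{T}$ by Theorem \ref{main:main}, giving the asserted simultaneous nonvanishing.

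The hard part will be the compatibility of condition (2) across the family. Since $r+s = p-t$ one has $(r+s)(p-1) \equiv t \pmod p$, so writing the $p$-part of $d_0$ as $1 + cp$ the congruence $d_0^{(r+s)(p-1)} \equiv a^{-1}$ becomes a linear condition of the shape $t\,c \equiv (\text{a constant read off from } a) \pmod p$. The delicate point is therefore that this must hold with the \emph{same} $c$ for every value of $t$ occurring in $\mathcal{T}$; this is exactly where the hypothesis $(r^r s^s (t-p)^t)^{p-1} \equiv a$ for a common $a$ is used, and it is the step I would check most carefully, in particular verifying that the family $\mathcal{T}$ indeed pins down a single admissible value of $c$ (the case $a \equiv 1 \pmod{p^2}$, where $c=0$ works for all $t$, being the cleanest instance). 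Once the $p$-part $c$ is fixed, the class $d_0 \bmod p$ is unconstrained by (2), so selecting it to be an odd quadratic residue and applying Dirichlet completes the argument and yields infinitely many $d$ with $L(1, \chi_{r,s,t}^{(d)}) \neq 0$ for all $(r,s,t) \in \mathcal{T}$ at once.
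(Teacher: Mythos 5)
You take the same route as the paper: the paper's entire proof of this corollary is the one-line remark that ``the proof of Theorem \ref{thm:mainFermat} also gives the following,'' and your unpacking of that remark is accurate as far as it goes. Indeed, only the condition $u(r^rs^s(t-p)^td^{r+s})\geq 2$, i.e.\ $(r^rs^s(t-p)^t)^{p-1}d^{(r+s)(p-1)}\equiv 1\pmod{p^2}$, couples $d$ to the triple; the quantity $a=(r^rs^s(t-p)^t)^{p-1}$ is a $(p-1)$-st power, hence automatically lies in the $p$-Sylow subgroup $1+p\bZ/p^2\bZ$ of $(\bZ/p^2\bZ)^\times$, say $a=1+b'p$; and writing the $p$-part of $d$ as $1+cp$ and using $(r+s)(p-1)=(p-t)(p-1)\equiv t\pmod{p}$, the condition becomes the linear congruence $tc\equiv -b'\pmod{p}$, exactly as you say. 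The choice of $d_0\bmod p$ as an odd quadratic residue and the appeal to Dirichlet are also as in the paper.

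However, the step you flagged as the one ``to check most carefully'' is a genuine gap, and the check in fact fails in general. Since $1\leq t\leq p-2$, distinct values of $t$ occurring in $\mathcal{T}$ are distinct mod $p$, so a single $c$ solving $tc\equiv-b'\pmod{p}$ for all occurring $t$ exists precisely when $b'\equiv 0$ (i.e.\ $a\equiv 1\pmod{p^2}$, where $c=0$ works, as you observe) or when all triples in $\mathcal{T}$ share one value of $t$. The bad case really occurs: for $p=7$ (where $\quadsym{2}{7}=+1$) the triples $(1,1,5)$ and $(1,5,1)$ both satisfy $r^rs^s(t-p)^t\equiv 17\pmod{49}$, hence share $a\equiv 17^{6}\equiv 22\pmod{49}$ with $b'=3$, but they force $c\equiv 5$ and $c\equiv 4\pmod{7}$ respectively, so no $d$ prime to $7$ satisfies the $u\geq 2$ condition for both triples simultaneously. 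Consequently your argument (and the corollary read literally) only goes through when $a\equiv 1\pmod{p^2}$ --- which is the situation of the paper's example with $p=31$ --- or when $t$ is constant on the family; for other $a$ this method cannot conclude, and since $u\geq 2$ is merely a sufficient condition for the $p$-condition of Theorem \ref{main:main} (Lemma \ref{lem:chi rst d} (iv)), one cannot even decide nonvanishing either way along these lines. To be fair, this defect is inherited from the paper, which offers no justification beyond the deferral to Theorem \ref{thm:mainFermat}; your proposal is that same argument with its weak point correctly identified, but the repair requires restricting the statement rather than a cleverer choice of $d$.
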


\begin{example} \label{exam:simul}
If $p = 31$ and $(r, s, t)$ is one of
\begin{align*}
	(1, 5, 25),\, (2, 10, 19), \, (3, 13, 15), \, (4, 7, 20), \, (8, 9, 14),
\end{align*}
then $(r^r s^s (t-p)^t)^{p-1} \equiv 1 \pmod{p^2}$. Hence for a prime $d \equiv 1 \pmod{p^2}$, 
the special values of the five $L$-functions of $\chi_{r, s, t}^{(d)}$ are simultaneously nonvanishing.
\end{example}

\subsection{Hecke characters attached to some hyperelliptic curves}

In this section, we will show that the Hecke characters $\chi_{A}$ over $\bQ(\zeta_{11})$ attached to the hyperelliptic curves
\begin{align*}
	y^2= x^{11} + A^2
\end{align*}
satisfy Condition \ref{cond:Weil}.
Note that $2$ is inert in $\bQ(\zeta_{11})/\bQ$ and 
the description of $\chi_A$ is concretely given in \cite{Wei52, Sto02}.
%Furthermore, we can show the following.

\begin{lemma} \label{lem:11hyperelliptic}
Assume that $A$ is the $11$-th power free integer such that $(A, 22) = 1$, $11^2 \mid (A^{10}-1)$  and all primes dividing $A$ split in $K/F$.
\\
(i) The restriction of $\chi_A$ to $F$ is the quadratic Hecke character $\epsilon_{K/F}$.\\
(ii) If $w$ is infinite, then $\chi_A(z) = |z|/z$ and  the root number of $\chi_A$ is $i^{-1}$.\footnote{We note that \cite[Proposition 2.2 (i)]{SY03} includes small typo: the root number should be $-i$ and $\chi_{A, w}(\delta) = i$. }\\
(iii) If $w = 2$, then the conductor exponent of $\chi_{A}$ at $2$ is $1$ if $A \equiv 1 \pmod{4}$ and $2$ if $A \equiv 3 \pmod{4}$. We also have $\chi_{A, 2}(2) = -1 $ and 
\begin{align*}
	\chi_{A, w}(x) = \quadsym{-1}{\Nm_{K_w/F_v} (x)}^{\frac{A-1}{2}} \lbrb{\frac{x}{2}}_{11}^{2}
\end{align*}
on  $\cO_{K_w}^\times$. The root number is $(-1)^{1+f_2}$ where $f_2$ is the conductor exponent of $\chi_{A, 2}$.
\\
(iv) If $w$ is the unique prime above $11$, then the conductor exponent of $\chi_A$ at $w$ is $1$.\\
%On $\cO_{K, w}^\times$, we have
%\begin{align*}
%	\chi_{A, w}(x) = \lbrb{\frac{x}{\pi_w}}_2.
%\end{align*}
%The root number of $\chi_A$ at $w$ is $-1$.\\
(v) The global root number of $\chi_A$ is $(-1)^{1+f_2}$.
\end{lemma}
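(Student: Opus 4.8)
The plan is to treat $\chi_A$ as a twist of the Hecke character $\chi_1$ attached to the base curve $C_1 : y^2 = x^{11}+1$, and to extract each local component from the Jacobi-sum description of Weil \cite{Wei52} and Stoll \cite{Sto02}, following the $\bQ(\zeta_5)$ template of \cite{SY03}. The substitution $x \mapsto A^{2/11}x$, $y\mapsto A y$ carries $C_A$ to $C_1$ over a field in which $A^{1/11}$ lives, so $\chi_A$ differs from $\chi_1$ by a finite-order character $\rho_A$ that is, away from $2$, an eleventh-power residue symbol attached to $A$. Since $A$ is prime to $22$ and eleventh-power free, $\rho_A$ is ramified precisely at the primes dividing $A$, where it has order dividing $11$.

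Parts (i), (ii), (iv) are then formal. For (i), the character $\chi_1$ restricts to $\epsilon_{K/F}$ on $\bA_F^\times$ by the CM structure (the analogue of \cite[Proposition 5]{Roh92} used in Lemma \ref{lem:chi rst d}(i)), and $\rho_A$ restricts trivially to $\bA_F^\times$: at a split prime the eleventh-power symbol is trivial on $F$, while the dyadic contribution restricts to a square and hence is trivial, since $\quadsym{-1}{\Nm_{K_w/F_v}(x)}^{(A-1)/2} = \quadsym{-1}{x^2}^{(A-1)/2} = 1$ for $x \in F_v^\times$. For (ii), a finite-order twist does not change the infinity type of a CM abelian variety, so $\chi_A$ and $\chi_1$ agree at the archimedean places, where the infinity type $|z|_w/z$ and the local root number $i^{-1}$ are read off from the Hodge decomposition as in \cite[Proposition 2.2]{SY03} (with the sign corrected in the footnote). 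For (iv), the hypothesis $11^2 \mid A^{10}-1$ is exactly the analogue of the condition $u(\cdots)\geq 2$ of Lemma \ref{lem:chi rst d}(iv): it keeps the twist from raising the ramification at the prime above $11$, so the conductor exponent there remains $1$.

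The core of the lemma is (iii), the computation at the unique inert dyadic prime $w \mid 2$. Here I would transcribe the local description of $\chi_1$ at $2$ from \cite{Wei52, Sto02} and multiply in $\rho_{A,2}$; since $2\nmid A$, the only ramified contribution is the quadratic factor $\quadsym{-1}{\Nm_{K_w/F_v}(x)}^{(A-1)/2}$, which is trivial exactly when $A \equiv 1\pmod 4$. Combined with the tame factor $\lbrb{\frac{x}{2}}_{11}^{2}$ of conductor exponent $1$ coming from $\chi_1$, this produces conductor exponent $1$ for $A\equiv 1$ and $2$ for $A\equiv 3 \pmod 4$, the value $\chi_{A,2}(2)=-1$ on the uniformizer, and the local root number $(-1)^{1+f_2}$. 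This is the main obstacle: as in \cite{SY03}, the dyadic epsilon-factor computation is delicate, and the inertness of $2$ in $\bQ(\zeta_{11})/\bQ$ forces the bookkeeping to be carried out in a residue field of degree $10$ rather than the degree-$4$ field of the $\zeta_5$ case.

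Finally, (v) is the assembly of the global root number as the product of the local root numbers over all places of $K$. At each split prime $\ell \mid A$ the two places $w,\overline w$ carry mutually inverse characters, because $\chi_A|_{\bA_F^\times}=\epsilon_{K/F}$ is unramified there; their root numbers therefore multiply to $\chi_{A,w}(-1)$, which equals $+1$ since $-1=(-1)^{11}$ is an eleventh power and $\chi_{A,w}$ is, up to an unramified character, an eleventh-power residue symbol. Hence the split primes contribute nothing, and the global root number reduces to the product of the archimedean factor $(i^{-1})^5 = i^{-1}$ from (ii), the tame factor at the prime above $11$, and the dyadic factor $(-1)^{1+f_2}$ from (iii). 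The archimedean and $11$-adic factors are exactly the pieces packaged by Weil's evaluation of the Jacobi-sum Hecke characters \cite{Wei52} (cf. \cite{Sto02}) and combine to $+1$, leaving the global root number equal to $(-1)^{1+f_2}$.
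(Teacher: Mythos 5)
Your overall architecture (realize $\chi_A$ as a twist of $\chi_1$ and assemble local data place by place) is reasonable and roughly parallels how the cited literature proceeds, but as written the proposal does not prove the lemma. For comparison, the paper's own proof is purely by citation: the exact finite part of $\chi_A$ is extracted from the proof of \cite[Proposition 3.3]{Sto02} (which is for general $y^2=x^l+A$ and in particular gives (i) and the dyadic formula), the global root number comes from \cite[Theorem 3.2]{Sto02}, and the remaining statements from \cite[Lemma 2.1, Proposition 2.2]{SY03}. So the substance of the lemma lives in exactly the computations your proposal defers.

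Concretely, two gaps. First, part (iii) --- the actual core --- is never established: you say you ``would transcribe'' the local description of $\chi_1$ at $2$ and multiply in $\rho_{A,2}$, and you yourself flag the dyadic epsilon-factor computation as the main obstacle; but the unit formula, the value $\chi_{A,2}(2)=-1$, the conductor exponents, and the local root number $(-1)^{1+f_2}$ are precisely what must be supplied (the paper obtains them from Stoll's general-$l$ results rather than rederiving them). Worse, your structural claim that $\rho_A$ ``is ramified precisely at the primes dividing $A$'' is inconsistent with the statement being proved: for $A\equiv 3\pmod{4}$ the conductor exponent at $2$ is $2$ while $\chi_1$ (the case $A=1$) has exponent $1$, so the twist $\chi_A/\chi_1$ has a ramified quadratic dyadic component, namely $\quadsym{-1}{\Nm_{K_w/F_v}(x)}^{\frac{A-1}{2}}$ --- your own formula --- even though $2\nmid A$. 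Second, in (v) the assertion that the archimedean factor $i^{-1}$ and the factor at the unique prime above $11$ ``combine to $+1$'' is stated without proof; it requires evaluating the tame epsilon factor (a Gauss sum) at that prime under the conductor-exponent-$1$ hypothesis guaranteed by $11^2\mid (A^{10}-1)$, which is exactly the content packaged by \cite[Theorem 3.2]{Sto02}. The formal parts (i), (ii), and (iv) of your argument are fine in outline, but as it stands the proposal reduces the lemma to the two nontrivial computations it neither performs nor precisely cites.
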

\begin{proof}
	In the proof of \cite[Proposition 3.3]{Sto02}, there is an exact form of the finite part of $\chi_A$, which proves (i) as well. Also, \cite[Theorem 3.2]{Sto02} gives a formula for global root numbers.
	\cite[Lemma 2.1, Proposition 2.2]{SY03} gives the other statements.
\end{proof}

\begin{corollary} \label{cor:Hyperalpha}
Let $A$ be an integer equivalent to $1$ modulo $4$. 
For the Hecke character $\chi_A$, we may choose $\alpha$ to be odd and positive.
\end{corollary}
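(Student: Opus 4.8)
The plan is to follow the template of Corollary \ref{cor:Fermatalpha}: the class $\alpha \in F^\times/\Norm_{K/F}K^\times$ is pinned down place by place by the defining relation \eqref{eqn:alpha}, and ``odd and positive'' amounts to showing that the local symbol $\epsilon_{K/F,v}(\alpha)$ is trivial at each real place (positivity) and at the inert dyadic place $v=2$ (oddness). Indeed, at a real place the local norm group is $\bR_{>0}$, so the sign of $\sigma_v(\alpha)$ is an invariant of the class; and since $2$ is inert, hence $K_w/F_v$ is unramified, one has $\epsilon_{K/F,2}(\alpha) = (-1)^{\ord_2(\alpha)}$, so that $\epsilon_{K/F,2}(\alpha)=+1$ is equivalent to $\ord_2(\alpha)\equiv 0 \pmod 2$, i.e.\ to $\alpha\in\cO_{F_2}^\times$ in the notation of Section \ref{sec:sketch}.

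First I would dispose of the real places. By Lemma \ref{lem:11hyperelliptic}(ii) the infinite local data of $\chi_A$ coincide with those used in Corollary \ref{cor:Fermatalpha}: for the complex $w\mid v$ one has $\chi_{A,w}(z)=|z|/z$, local root number $i^{-1}$, and $\chi_{A,w}(\delta)=i$. Running the same convention shift
\[
\epsilon\lbrb{\tfrac12,\chi_{A,w},\tfrac12\psi_w} = \frac{\chi_{A,w}(1/2)}{|\chi_{A,w}(1/2)|}\,\epsilon\lbrb{\tfrac12,\chi_{A,w},\psi_w},
\]
whose left factor equals $1$ at an infinite place exactly as in Corollary \ref{cor:Fermatalpha}, gives $\epsilon_{K/F,v}(\alpha)=+1$, hence $\sigma_v(\alpha)>0$, at every real $v$. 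This part is routine.

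The substance is the dyadic place $v=2$, which I would handle with Lemma \ref{lem:11hyperelliptic}(iii). The hypothesis $A\equiv 1\pmod 4$ forces the conductor exponent $f_2=1$, so the local root number at $2$ in the $\psi$-normalization is $(-1)^{1+f_2}=+1$, while the factor $\quadsym{-1}{\Norm_{K_w/F_v}(\cdot)}^{(A-1)/2}$ in the formula for $\chi_{A,w}$ on $\cO_{K_w}^\times$ disappears since $(A-1)/2$ is even. The passage from Stoll--Yang's character $\psi$ to Yang's $\tfrac12\psi$ contributes $\chi_{A,2}(1/2)/|\chi_{A,2}(1/2)| = \chi_{A,2}(2)^{-1} = -1$, again by Lemma \ref{lem:11hyperelliptic}(iii). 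Combining these with $\chi_{A,2}(\delta)$ (here $\delta$ is a $2$-adic unit, so this is evaluated by the explicit formula on $\cO_{K_w}^\times$) is designed to yield $\epsilon_{K/F,2}(\alpha)=+1$, whence $\ord_2(\alpha)$ is even and $\alpha$ may be taken odd; multiplying a chosen representative by a power of $4=\Norm_{K/F}(2)\in\Norm_{K/F}K^\times$ makes it a genuine $2$-adic unit without disturbing the signs at the real places.

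The main obstacle is precisely this dyadic computation: one must check that the power-residue-symbol component of $\chi_{A,2}(\delta)$ cancels against the corresponding component of the root number, so that the left-hand side of \eqref{eqn:alpha} at $v=2$ is genuinely a sign and that this sign is $+1$. An alternative that avoids evaluating $\chi_{A,2}(\delta)$ directly is to use Hilbert reciprocity $\prod_v \epsilon_{K/F,v}(\alpha)=1$: the symbol is trivial at all real places, at the split primes dividing $A$ (where $\epsilon_{K/F,v}$ is trivial), and at the unramified inert primes away from $2$ and $11$ (where $\chi_A$, $\psi$, and $\delta$ are all unramified), so that $\epsilon_{K/F,2}(\alpha)=\epsilon_{K/F,\fp}(\alpha)$ with $\fp\mid 11$. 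This transfers the problem to the ramified prime above $11$, where the conductor exponent is again $1$ by Lemma \ref{lem:11hyperelliptic}(iv); whichever route one takes, a single local dyadic-or-ramified root-number evaluation remains the crux.
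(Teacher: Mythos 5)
Your skeleton matches the paper's: positivity at the real places is inherited verbatim from the proof of Corollary \ref{cor:Fermatalpha}, and oddness is to be read off from \eqref{eqn:alpha} at the inert place $v=2$, where $\epsilon_{K/F,2}(\alpha)=(-1)^{\ord_2(\alpha)}$ since $K_w/F_v$ is unramified. But the proposal stops exactly where the proof has to happen: you never evaluate $\chi_{A,2}(\delta)$, and you say so yourself (``a single local dyadic-or-ramified root-number evaluation remains the crux''). The paper does carry this out, and it is a one-line computation from Lemma \ref{lem:11hyperelliptic}(iii):
\begin{align*}
\epsilon\lbrb{\tfrac12,\chi_{A,2},\tfrac12\psi_2}\,\chi_{A,2}(\delta)=(-1)^{1+f_2}\,(-1)^{\frac{A-1}{2}}=1,
\end{align*}
because $A\equiv1\pmod 4$ gives $f_2=1$ and makes $(A-1)/2$ even; hence $\epsilon_{K/F,2}(\alpha)=+1$ and $\ord_2(\alpha)\equiv0\pmod2$. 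Your Hilbert-reciprocity detour is likewise left uncomputed: it only trades the dyadic evaluation for one at $\fp\mid 11$, so it does not close the gap either.

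Moreover, the one piece of bookkeeping you do commit to contains a normalization error that would flip the answer. Lemma \ref{lem:11hyperelliptic}(iii) is quoted from \cite{SY03}, and---as the paper's own footnote in the proof of Corollary \ref{cor:Fermatalpha} records---the Stoll--Yang local root number is $\epsilon\lbrb{\tfrac12,\chi_w,\tfrac12\psi_w}$, not $\epsilon\lbrb{\tfrac12,\chi_w,\psi_w}$. So $(-1)^{1+f_2}$ is \emph{already} the root number in the $\tfrac12\psi_2$-normalization, and no conversion factor is needed at $v=2$; your inserted factor $\chi_{A,2}(1/2)/|\chi_{A,2}(1/2)|=\chi_{A,2}(2)^{-1}=-1$ is spurious. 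Had you completed the computation on that basis, you would have obtained $\epsilon\lbrb{\tfrac12,\chi_{A,2},\tfrac12\psi_2}=-1$, which together with $\chi_{A,2}(\delta)=(-1)^{(A-1)/2}=+1$ forces $\epsilon_{K/F,2}(\alpha)=-1$, i.e., the negation of the corollary. The conversion factor belongs only to the comparison with the conventions of \cite{GR78, Shu} used for the Fermat-curve characters, which is why it appears in Corollary \ref{cor:Fermatalpha} but plays no role here.
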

\begin{proof}
The proof of Corollary \ref{cor:Fermatalpha} also shows that $\alpha$ is positive in this case.
On the other hand,
\begin{align*}
	 \epsilon\left(\frac{1}{2}, \chi_{A, 2}, \frac{1}{2} \psi_{2}\right)
	 \chi_{A, 2}(\delta) = (-1)^{1 + f_2} (-1)^{\frac{A-1}{2}} = 1.
\end{align*}
Hence $\alpha$ should satisfy $\ord_2(\alpha) \equiv 0 \pmod{2}$.
\end{proof}

\begin{proof}[Proof of Theorem \ref{main:hyperelliptic}]
By Lemma \ref{lem:11hyperelliptic}, $\chi_{A, 2}(-\delta) = (-1)^{\frac{A-1}{2}}$ and $\chi_{A, 2}(2) = -1$. Since we further assume that $A \equiv 1 \pmod{4}$, $\chi_{A, 2}(-\delta) = +1$.
Together with $\alpha \in \bZ_2^\times$, the proof of Theorem \ref{main:on11} also works in this case.
Therefore, the Hecke characters attached to the hyperelliptic curves satisfy the Weil index condition.
Since $11^2 \mid (A^{10}-1)$ (resp. all primes dividing $A$ splits in $K/F$), it also satisfies $p$-condition (resp. splitting condition) of Theorem \ref{main:main}.
Also Corollary \ref{cor:Hyperalpha} shows that $\chi_A$ satisfies the infinite place condition of Theorem \ref{main:main}.
By Theorem \ref{main:main}, its $L$-function does not vanish at $s=1$.
\end{proof}

\emph{Acknowledgement.} K. Jeong was supported by Chonnam National University (Grant number: 2022-2642). Y-W. Kwon was supported by the National Research Foundation of Korea (NRF) grant funded by the Korean government (MSIT) (2020R1A4A101664913) and by Basic Science Research Program through the National Research Foundation of Korea (NRF) funded by the Ministry of Education (2022R1I1A1A01067581). J. Park was supported by Samsung Science and Technology Foundation under Project Number SSTF-BA2001-02.

\end{document}